\newtheorem{theorem}{Theorem}[section]
\newtheorem{lemma}[theorem]{Lemma}
\newtheorem{proposition}[theorem]{Proposition}
\newtheorem{corollary}[theorem]{Corollary}
\theoremstyle{definition}
\newtheorem{remark}[theorem]{Remark}
\numberwithin{equation}{section}
\acrodef{KPZ}{Kardar--Parisi--Zhang}
\acrodef{LD}{Large Deviation}
\acrodef{LDP}{Large Deviation Principle}
\acrodef{PP}{Point Process}
\acrodef{TASEP}{Totally Asymmetric Simple Exclusion Process}
\acrodef{SHE}{Stochastic Heat Equation}
\acrodef{SPDE}{Stochastic Partial Differential Equation}
\acrodef{SAO}{Stochastic Airy Operator}
\acrodef{RCLL}{Right-Continuous-with-Left-Limits}
\acrodef{SDE}{Stochastic Differential Equation}
\renewcommand{\Pr}{ \mathbf{P} }	
\newcommand{\Ex}{ \mathbf{E} }		
\renewcommand{\d}{ \mathrm{d} }		
\newcommand{\sao}{\mathcal{A}}		
\newcommand{\saoo}{\mathcal{A}_*}	
\newcommand{\ao}{\mathsf{A}}		
\newcommand{\hill}{\mathcal{H}}		
\newcommand{\ind}{\mathbf{1}}		
\newcommand{\set}[1]{{\{#1\}}}		
\newcommand{\schro}{S}				
\newcommand{\pot}{J}				
\newcommand{\C}{\mathbb{C}} 
\newcommand{\N}{\mathbb{N}} 
\newcommand{\R}{\mathbb{R}} 
\newcommand{\Z}{\mathbb{Z}} 
\newcommand{\rate}{\Phi_-}			
\newcommand{\cost}{\phi}			
\newcommand{\eigen}{\boldsymbol{\lambda}}	
\newcommand{\img}{\mathbf{i}}				
\newcommand{\intv}{I}			
\newcommand{\T}{\mathbb{T}} 	
\newcommand{\pt}{\eta}			
\newcommand{\Nsao}[1]{N(#1,\sao)} 			
\newcommand{\Nsaoi}[2]{N_i(#1,\sao)}		
\newcommand{\Nhill}[2]{N(#1,\hill_{#2})}	
\newcommand{\Nsaoo}[1]{N(#1,\saoo)} 		
\newcommand{\Nhillt}[2]{N(#1,\widetilde{\hill}_{#2})}	
\newcommand{\Nsaoot}[1]{N(#1,\til{\sao}_*)}	
\newcommand{\hilb}{\mathscr{H}} 	
\newcommand{\hilbb}{\mathscr{V}} 	
\newcommand{\hilbmm}{\mathscr{E}} 	
\newcommand{\ip}[2]{\langle#1,#2\rangle}			
\newcommand{\norm}[1]{\Vert #1\Vert}				
\newcommand{\e}{\varepsilon}
\newcommand{\ic}{\text{ic}}
\newcommand{\hf}{\text{hf}}
\newcommand{\neu}{\text{Neu}}
\newcommand{\calG}{\mathcal{G}}
\newcommand{\calN}{\mathcal{N}}
\newcommand{\calM}{\mathcal{M}}
\newcommand{\til}{\widetilde}
\renewcommand{\bar}{\overline}
\newcommand*{\Cdot}{{\raisebox{-0.5ex}{\scalebox{1.8}{$\cdot$}}}} 
\title[Lower tail LDs of the KPZ equation]
{Exact lower tail large deviations of the KPZ equation}
\author[L.-C.\ Tsai]{Li-Cheng Tsai}
\address{L.-C.\ Tsai,
	Departments of Mathematics, Columbia University,
	\newline\hphantom{\quad \ \ L.-C. Tsai}
	2990 Broadway, New York, NY 10027
	}
\email{lctsai.math@gmail.com}
\subjclass[2010]{%
Primary 60F10,		
Secondary 60H25.	
}
\keywords{%
Kardar--Parisi--Zhang equation, large deviations, Airy point process, random operators, stochastic Airy Operator.
}
\begin{document}
\begin{abstract}
Consider the Hopf--Cole solution $ h(t,x) $ of the KPZ equation with narrow wedge initial condition.
Regarding $ t\to\infty $ as a scaling parameter, 
we provide the first rigorous proof of the Large Deviation Principle (LDP) for the lower tail of $ h(2t,0)+\frac{t}{12} $,
with speed $ t^2 $ and an explicit rate function $ \rate(z) $.
This result confirms existing physic predictions~\cite{sasorov17,corwin18a,krajenbrink18}.
Our analysis utilizes the formula from~\cite{borodin16} to convert LDP of the KPZ equation
to calculating an exponential moment of the Airy point process.
To estimate this exponential moment, 
we invoke the stochastic Airy operator,
and use the Riccati transform, 
comparison techniques,
and certain variational characterizations of the relevant functional.
\end{abstract}

\maketitle

\section{Introduction}
\label{intro}
In this article we study the lower tail probability of the \ac{KPZ} equation:
\begin{align*}
	\partial_t h = \tfrac12 \partial_{xx} h + \tfrac12 (\partial_x h)^2 + \xi,
	\qquad
	(t,x) \in [0,\infty)\times\R,
\end{align*}
where $ \xi=\xi(t,x) $ is the spacetime white noise.
Introduced in \cite{kardar86}, the \ac{KPZ} equation is a paradigmatic model for random surface growth,
which has links to a host of different physical phenomena. 
Via the Hopf--Cole transform and the Feynman--Kac formula, 
this equation connects to directed polymer in random environment \cite{huse85}.
The spatial derivative $ \partial_x h $ satisfies the stochastic Burgers equation,
which is a model for randomly stirred fluid \cite{forster77}, interacting particle systems, and driven lattice gases \cite{vanbeijeren85}.
In additional to being a phenomenological model, the \ac{KPZ} equation has been fertile ground for mathematical study.
Being a nonlinear equation and an irreversible Markov process,
\ac{KPZ} equation has been a prototype for the study of \acp{SPDE} 
and 
weakly irreversible interacting particle systems.
Along with a vast host of (discrete and continuous) models, the \ac{KPZ} equation enjoys exact solvability
originating from combinatorics, representation theory, and Bethe ansatz.
We refer to \cite{ferrari10,quastel11,corwin12,quastel15,chandra17} and the references therein.

We say $ h $ is a \textbf{Hopf--Cole solution} of the \ac{KPZ} equation if $ h(t,x)=\log Z(t,x) $,
and the process $ Z(t,x) $ solves the \ac{SHE}
\begin{align}
	\label{eq:she}
	\partial_t Z = \tfrac12 \partial_{xx} Z + \xi Z,
	\qquad
	(t,x) \in [0,\infty)\times\R.
\end{align}
Throughout this article we will consider the \textbf{narrow wedge} initial condition
\begin{align}
	\label{eq:nw}
	Z^\ic(x) = \delta(x).
\end{align}
Such a notion of solution is motivative by informally exponentiating the \ac{KPZ} equation,
and it is the physically relevant notion of solution that has been observed from
various regularization schemes and particle systems, e.g., \cite{bertini95,bertini97}.
Also, for certain class of continuous initial conditions, the Hopf--Cole solution 
agrees with the ones constructed from regularity structures \cite{hairer14}, 
paracontrolled distributions \cite{gubinelli15}, 
and energy solutions \cite{gonccalves14,gubinelli18}.
A slight generalization of the standard theory \cite{walsh86,bertini95} asserts that
there exists a unique $ C((0,\infty),\R) $-valued process $ Z $ that solves \eqref{eq:she}--\eqref{eq:nw} in the mild sense, i.e.,
\begin{align*}
	Z(t,x) = p(t,x) + \int_0^t \int_\R p(t-s,x-y) Z(s,y) \xi(s,y) \, \d s \d y,
\end{align*}
where $ p(t,x) := (2\pi t)^{-\frac12} \exp(-\frac{x^2}{2t}) $ denotes the standard heat kernel.
Further, \cite{mueller91} showed that for almost surely for all $ t>0 $, the solution is strictly positive,
i.e., $ Z(t,x)>0 $, for all $ x\in\R $ and $ t>0 $.
This defines the Hopf--Cole solution $ h(t,x) := \log Z(t,x) $ with the initial condition~\eqref{eq:nw}.

Under the initial condition~\eqref{eq:nw}, for large $ t $,
the height develops an average (downward) growth with velocity $ -\frac{1}{24} $,
and, after centering, fluctuates at $ O(t^\frac13) $ and scales to the GUE Tracy--Widom distribution
\cite{amir11,calabrese10,dotsenko10,sasamoto10}
\begin{align*}
	t^{-\frac13}(h(2t,0) + \tfrac{t}{12}) \Longrightarrow \text{GUE Tracy--Widom,}
	\quad
	\text{as } t\to\infty.
\end{align*}
Here, instead of typical behaviors of $ h $, we focus on \acp{LD}, namely the rare events that $ h(2t,0) $ deviates 
distance $ O(t) $ from its center $ -\frac{t}{12} $.
Regarding $ t\to\infty $ as a scaling parameter, we aim at extracting the leading order 
of the tail probability: 
\begin{align}
	\tag{Upper Tail}
	\Pr\big[ h(2t,0)+\tfrac{t}{12} > z t \big] &\approx \exp\big(-t^{a_+} \Phi_+(z) \big),
	\quad
	z > 0,
\\
	\tag{Lower Tail}
	\Pr\big[ h(2t,0)+\tfrac{t}{12} < z t \big] &\approx \exp\big( -t^{a_-} \rate(z) \big),
	\quad
	z < 0,
\end{align}
as $ t\to\infty $.
We refer to $ t^{a_\pm} $ as the speed of deviations, and $ \Phi_\pm(z) $ as the rate function.

Put in a broader context of random growth, directly polymers, and particle systems,
the upper and lower tail \acp{LD} considered here probe excess growth and die-out, respectively.
Whereas excess growth originates from \emph{locally} favorable environment,
die-out occurs only when a widespread area of environment \emph{jointly} becomes unfavorable.
This distinction results in asymmetric speed: $ t^{a_+}=t^1 $ and $ t^{a_-}=t^2 $, and also manifests itself in the rate function $ \Phi_\pm $.
The upper tail is accessible from Fredholm determinants~\cite[Proposition 10]{corwin13},
and it is predicted \cite{ledoussal16,sasorov17} that $ \Phi_+(z)=\frac43 z^\frac32 $, a single $ \frac32 $-power.
On the other hand, the lower tail rate function is predicted~\cite{kolokolov07,meerson16} to exhibit a crossover from 
cubic power law $ (-z)^3 $ for small $ |z| $ to $ \frac52 $-power law $ (-z)^{\frac52} $ for large $ |z| $.
While the $ \frac32 $-power law is seen also in zero temperature polymer models,
the crossover behavior for lower tail distinguishes \ac{KPZ} equation, as a positive temperature polymer model, from zero temperature polymers.

Given the known Fredholm determinant formula (\cite{amir11,calabrese10,dotsenko10,sasamoto10}, see~\cite[Eq.~(7)]{borodin16}),
extracting the upper tail boils down a perturbative analysis.
This is so because, the relevant operator becomes vanishing (in Hilbert--Schmidt norm) as $ t\to\infty $.
By contrast, for the lower tail, one faces the situation where an operator does not converge to zero yet the determinant does.
This is a well-known issue in random matrix theory, and has since prompted the development for much more involved machineries.
For example, extracting the lower tail of the GUE Tracy--Widom distribution
is done by the method of commuting operators \cite{tracy94}, 
via Riemann--Hilbert problems \cite{baik08},
via the Stochastic Airy Operator \cite{ramirez11},
or non-rigorously via Coulomb gas \cite{dean06}.

The first result regarding lower tail of the \ac{KPZ} equation is 
the aforementioned almost-sure positivity of $ Z $ \cite{mueller91}.
Motivated in part by showing the existence of probability density of $ Z(t,x) $,
there has been works \cite{mueller08,flores14,hu18}
on negative moments and the positivity of $ Z $.
These results mostly concern finite time behaviors of $ Z $,
and, in view of the $ -\frac{t}{12} $ average growth,
are not well-adapted to the $ t\to\infty $ regime.

Recently, there has been much development around accessing the lower tail in the $ t\to\infty $ regime.
In~\cite{corwin18}, \emph{rigorous} upper and lower bounds on the lower tail probability are obtained.
The bounds hold for all sufficiently large $ t $, and capture the aforementioned crossover behavior. 
The upper and lower bounds do not match as $ t\to\infty $, and hence do not yield the rate function $ \rate $.
In physics literature, much machinery has been built toward obtaining the the rate function.
In \cite{sasorov17}, an explicit rate function $ \rate $ (see~\eqref{eq:rate}) was predicted.
This is done by analyzing a generalization of Painlev\'{e} II, introduced in \cite{amir11},
through an infinite-dimensional Riemann--Hilbert problem and a WKB approximation, along with a self-consistency ansatz.
Later, based on a formula from~\cite{borodin16}, 
\cite{corwin18a} employed a Coulomb gas heuristic to derive the rate function $ \rate $, which agrees with result in~\cite{sasorov17}.
More recently, based on certain conjectural forms of expansions, \cite{krajenbrink18} developed a scheme of calculating cumulants under the Airy point process,
and, through resummation, produced the same rate function $ \rate $ previously predicted.

The aforementioned physics results provide much insight in the lower tail \ac{LDP}.
They, however, assume certain conjectural formulas or approximations,
or are based on certain infinite dimensional settings that sit beyond existing theories.
In this work, we give the \emph{first rigorous proof} of the lower tail \ac{LDP} of the KPZ equation, 
by invoking the stochastic Airy operator,
and using the Riccati transform, 
comparison techniques,
and certain variational characterizations.

\begin{theorem}\label{thm:main:}
Let $ h(t,x) $ denote the Hopf--Cole solution of \ac{KPZ} equation with narrow wedge initial condition $ Z(0,x)=\delta(x) $, 
and fix $ \zeta\in(0,\infty) $.
Then
\begin{align*}
	\lim_{t\to\infty} \frac{1}{t^2} \log \big(  \Pr\big[ h(2t,0)+\tfrac{t}{12} < -\zeta t \big] \big)
	=
	-\rate(-\zeta),
\end{align*}
with the rate function
\begin{align}
	\label{eq:rate}
	\rate(z)
	:=
	\tfrac{4}{15\pi^6}(1-\pi^2z)^{\frac52}-\tfrac{4}{15\pi^6}+\tfrac{2}{3\pi^4}z-\tfrac{1}{2\pi^2} z^2,
	\quad
	z \leq 0.
\end{align}
\end{theorem}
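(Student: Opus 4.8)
The strategy has three stages: use the formula of~\cite{borodin16} to turn the lower tail probability into an exponential moment of a linear statistic of the Airy point process; realize that point process as the spectrum of the stochastic Airy operator; and evaluate the exponential moment by a Cameron--Martin large deviation analysis of (the Riccati transform of) a suitably rescaled version of that operator. To begin, write $\Upsilon_t:=h(2t,0)+\tfrac t{12}$ and, for $c>0$, set $\Theta_t(c):=\Ex\big[\exp(-e^{\Upsilon_t+ct})\big]$. Since $x\mapsto\exp(-e^x)$ is a smooth surrogate for $\ind\{x<0\}$ that collapses doubly exponentially once $x>0$, one checks that for $0<c_1<c<c_2$ and all large $t$,
\[
\tfrac12\,\Pr[\Upsilon_t<-c_2 t]\;\le\;\Theta_t(c)\;\le\;\Pr[\Upsilon_t<-c_1 t]+\exp\!\big(-e^{(c-c_1)t}\big),
\]
the last term being $o(e^{-Mt^2})$ for every $M$. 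Hence it suffices to prove $\lim_{t\to\infty}\tfrac1{t^2}\log\Theta_t(c)=-\rate(-c)$ for every $c>0$, after which the theorem follows by sending $c\to\zeta$ from either side and invoking continuity of the explicit function $\rate$. By the identity of~\cite{borodin16} (up to corrections negligible at speed $t^2$), $\Theta_t(c)=\Ex\big[\prod_{k\ge1}(1+e^{ct+t^{1/3}\mathbf a_k})^{-1}\big]$, where $\mathbf a_1>\mathbf a_2>\cdots$ is the Airy$_2$ point process.

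I would then realize $\mathbf a_k=-\lambda_k$, with $\lambda_1<\lambda_2<\cdots$ the eigenvalues of the stochastic Airy operator $\sao=-\partial_{xx}+x+\sqrt2\,b'$ on $L^2(0,\infty)$ with Dirichlet boundary condition. The contributing points sit at scale $t^{2/3}$, so substitute $x=t^{2/3}y$; Brownian scaling turns $\sao$ into $\saoo=-t^{-2}\partial_{yy}+y+\sqrt2\,t^{-1}\til b'$, whose eigenvalues are $\mu_k:=t^{-2/3}\lambda_k$, and
\[
\Theta_t(c)=\Ex\Big[\exp\Big(-\sum_{k\ge1}\log\big(1+e^{t(c-\mu_k)}\big)\Big)\Big].
\]
Using $x_+\le\log(1+e^x)\le x_++e^{-|x|}$ and an a priori bound---obtained by comparison of the Riccati diffusion of $\saoo$ with deterministic comparison equations---on the number of $\mu_k$ lying within $O(1)$ of $c$, the exponent becomes $t\sum_k(c-\mu_k)_+ + o(t^2)=t^2\int_0^c\tfrac1t\Nsaoo{\mu}\,\d\mu + o(t^2)$, where $\Nsaoo{\mu}:=\#\{k:\mu_k\le\mu\}$. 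The theorem is thereby reduced to a large deviation statement, at speed $t^2$, for the integrated density of states of $\saoo$.

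The core claim is
\[
\lim_{t\to\infty}\tfrac1{t^2}\log\Theta_t(c)=-\cost(c),\qquad
\cost(c):=\inf_{v\ge0}\Big[\tfrac2{3\pi}\!\int_0^\infty\!\big(c-y-v(y)\big)_+^{3/2}\,\d y+\tfrac14\!\int_0^\infty\! v(y)^2\,\d y\Big].
\]
For the lower bound I would tilt $b$ by the Cameron--Martin shift producing the deterministic drift $v$ in the noise term of $\saoo$: the tilt costs $\exp(-\tfrac{t^2}4\int_0^\infty v(y)^2\,\d y+o(t^2))$, and under it $\saoo$ is an $O(t^{-1})$ random perturbation of the deterministic Schr\"odinger operator $\hill=-t^{-2}\partial_{yy}+y+v$. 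A semiclassical (Weyl) asymptotic---made rigorous by sandwiching the Riccati diffusion of $\saoo$ between solutions of the deterministic Riccati equation, equivalently $\Nsaoo{\cdot}$ between integrated densities of states of mildly perturbed copies of $\hill$, and using the Rayleigh-quotient characterization of eigenvalues---gives $\tfrac1t\Nsaoo{\mu}\to\tfrac1\pi\int_0^\infty\sqrt{(\mu-y-v(y))_+}\,\d y$, hence $\tfrac1{t^2}\sum_k\log(1+e^{t(c-\mu_k)})\to\tfrac2{3\pi}\int_0^\infty(c-y-v(y))_+^{3/2}\,\d y$; optimizing over $v$ gives $\liminf_{t\to\infty}\tfrac1{t^2}\log\Theta_t(c)\ge-\cost(c)$. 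It remains to evaluate $\cost$, which I would do by noting that the infimum decouples pointwise: for $y<c$, minimizing $\tfrac2{3\pi}(c-y-v)_+^{3/2}+\tfrac14v^2$ over $v\ge0$ yields $v^\star(y)=\tfrac2{\pi^2}\big(\sqrt{1+\pi^2(c-y)}-1\big)$, so that $c-y-v^\star=\tfrac{\pi^2}4(v^\star)^2$ and the integrand equals $\tfrac{\pi^2}{12}(v^\star)^3+\tfrac14(v^\star)^2$; the substitution $w=\sqrt{1+\pi^2(c-y)}$ then turns $\cost(c)$ into $\tfrac2{3\pi^6}\int_1^{\sqrt{1+\pi^2 c}}(2w^4-3w^3+w)\,\d w$, which upon simplification is exactly $\rate(-c)$ of~\eqref{eq:rate}.

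The main obstacle is the matching upper bound $\limsup_{t\to\infty}\tfrac1{t^2}\log\Theta_t(c)\le-\cost(c)$: one must show there is no mechanism cheaper than the Cameron--Martin deviation above by which the softplus-linear statistic of the $\mu_k$ can be driven down. This is a Laplace principle in an infinite-dimensional setting. The plan is to bracket $\Nsaoo{\mu}$ from below by integrated densities of states of deterministic Hill operators whose potentials trap the driving noise, discretize in $\mu$, and run a Laplace-type argument that returns precisely the deterministic variational problem, treating the regimes $y\to0^+$ (the $+\infty$ initial condition of the Riccati diffusion) and $y\to\infty$ with separate comparison estimates. The delicate point is that all these comparison inequalities must be quantitative and uniform enough to survive simultaneously the $t\to\infty$ limit and the $e^{-\Theta(t^2)}$ precision required by the large deviation scale.
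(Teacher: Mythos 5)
Your overall roadmap coincides with the paper's: the reduction of the tail probability to the double-exponential moment (your first display is exactly the paper's Lemma~\ref{lem:passing}), the identification of the Airy point process with the spectrum of $\sao_2$ via~\eqref{eq:BG}, the Girsanov tilt with cost $\tfrac{t^2}{4}\int v^2$, the variational problem $\inf_v\big[\tfrac{2}{3\pi}\int(c-y-v)_+^{3/2}+\tfrac14\int v^2\big]$, and the closed-form evaluation producing $\rate(-c)$ are all correct and match the paper (up to the substitution $v\mapsto\sqrt{2}\,v$ and the paper's choice to work with general $\beta,L$). The lower bound sketch (tilt, then a semiclassical/Riccati comparison giving Weyl-type counting) is essentially the paper's Section~\ref{sect:pflw}, although note that after tilting the noise is still white, so ``an $O(t^{-1})$ random perturbation'' is not small in any operator sense; the paper makes this rigorous by partitioning into blocks of length $t^{\alpha}$, comparing on each block to the Dirichlet Laplacian via Lemma~\ref{lem:spec:cmp}, and controlling $\sup_{\intv_i}|B-B(\pt_{i-1})|$ on high-probability events.

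The genuine gap is the upper bound, which you yourself flag as ``the main obstacle'' and for which you offer only a plan. The plan as stated---``bracket $N$ from below by integrated densities of states of deterministic Hill operators whose potentials trap the driving noise''---does not engage the actual difficulty: on the $e^{-\Theta(t^2)}$ probability scale one cannot confine the Brownian path to any deterministic envelope, and the whole question (emphasized in the paper's Section~\ref{sect:outline}) is whether \emph{rough, non-locally-constant} deviations of $B$, acting at scales down to $t^{-1/3}$, could depress the linear statistic more cheaply than a locally constant drift. A Laplace-principle discretization in $\mu$ does not by itself rule this out. The paper's resolution is a specific algebraic mechanism: after using $\cost_t(\lambda)\ge t^{1/3}\lambda_-$ to rewrite each block's contribution as a truncated sum of eigenvalues~\eqref{eq:trunc:sum}, it passes from Dirichlet to periodic boundary conditions by interlacing (Lemma~\ref{lem:periodic}, with the Neumann ground state controlled separately in Lemma~\ref{lem:G1bd}), and then Proposition~\ref{prop:key} shows---using the variational characterization of sums of eigenvalues (Lemma~\ref{lem:minimaxx}) tested on the Fourier basis, whose elements have constant modulus $|f_n|^2\equiv|\intv|^{-1}$, together with the elementary identity~\eqref{eq:simple}---that the truncated sum only decreases when $B'$ is replaced by its block average $\frac{B(\pt_i)-B(\pt_{i-1})}{|\intv_i|}$. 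This is what reduces the upper bound to a one-dimensional Gaussian Laplace computation per block (Lemma~\ref{lem:G2bd}) and makes locally constant drifts provably optimal. Your proposal contains no counterpart to this step (nor any substitute, e.g.\ a convexity/averaging argument valid at the required exponential precision), so as written the upper bound, and hence the theorem, is not established.
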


The starting point of our analysis is a formula of \cite{borodin16}
that expresses the previously known Fredholm determinant formula \cite{amir11,calabrese10,dotsenko10,sasamoto10} in terms of Airy \ac{PP}.
Even though only the $ \beta=2 $ Airy \ac{PP} will enter the formula,
to demonstrate the generality of our approach, we will consider general $ \beta>0 $.
Let $ B(x) $, $ x \geq 0 $, denote a standard Brownian motion.
Recall from~\cite{ramirez11} that the \ac{SAO}
\begin{align}
\label{eq:sao}
	\sao_\beta := - \frac{\d^2~}{\d x^2} + x + \frac{2}{\sqrt{\beta}} B'(x)
\end{align}
with Dirichlet boundary boundary condition at $ x=0 $ defines a self-adjoint operator on $ L^2(0,\infty) $
(see Section~\ref{sect:tool} for more details on the construction of $ \sao_\beta $).
Further, $ \sao_\beta $ has a pure-point spectrum that is bounded below and has no limit points:
\begin{align*}
	 -\infty \leq \eigen_{1}(\sao_\beta) \leq \eigen_{2}(\sao_\beta) \leq \eigen_{3}(\sao_\beta) \ldots \to\infty.
\end{align*}
The $ \beta $-Airy \ac{PP} $ \{\mathfrak{a}_{k,\beta}\}_{k=1}^\infty $ is simply this spectrum of $ \sao_\beta $ up to a space reversal, i.e.,
$ \mathfrak{a}_{k,\beta} := - \eigen_{k}(\sao_\beta) $.  
In \cite[Theorem~2.1]{borodin16}, substituting $ (\frac{T}{2},\mathfrak{a}_k,u) \mapsto (t,-\eigen_{k}(\sao_2),e^{t\zeta}) $, we have
\begin{align}
\label{eq:BG}
	\Ex\Big[ \exp\Big( -e^{h(2t,0)+\frac{t}{12}+t\zeta} \Big) \Big]
	=
	\Ex\Big[ \exp\Big( -\sum_{k=1}^\infty \cost_{t}\big( \eigen_{k}(\sao_2)-t^{\frac23}\zeta \big) \Big)  \Big],
\end{align}
where
\begin{align}
	\label{eq:costt}
	\cost_{t}(\lambda) := \log\big(1+\exp(-t^{\frac13}\lambda)\big).	
\end{align}
The formula~\eqref{eq:BG} links two \emph{distinct} objects: the \ac{KPZ} equation on the left, and the Airy \ac{PP} process on the right.
These two objects are a priori irrelevant, but specific observables of them match algebraically.

It is readily checked that the double exponential function $ e^{-e^x} $ 
well approximates the indicator function $ \ind_\set{x<0} $ except in a neighborhood of $ x=0 $.
As $ t\to\infty $, it is conceivable that the l.h.s.\ of~\eqref{eq:BG} becomes a good proxy for the tail probability $ \Pr[h(2t,0)+\frac{t}{12} < -\zeta t] $,
and that proving Theorem~\ref{thm:main:} amounts to proving 

\begin{theorem}
\label{thm:main}
For fixed $ \in(0,\infty) $ and $ \zeta,\beta,L\in(0,\infty) $, we have
\begin{align}
	\label{eq:thm:main}
	\lim_{t\to\infty} \frac{1}{t^2} \Ex\Big[ \exp\Big( -L \sum_{k=1}^\infty \cost_{t}\big (\eigen_{k}(\sao_\beta) - t^\frac23\zeta \big) \Big)  \Big]
	=
	-L \, \Big( \frac{2 L}{\beta} \Big)^5 \rate\Big( -\Big(\frac{\beta}{2L}\Big)^2 \zeta \Big).
\end{align}
\end{theorem}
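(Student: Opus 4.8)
\emph{Proof plan.}
The plan is to first reduce the left side of~\eqref{eq:thm:main} to an exponential moment of a linear statistic of the spectrum, then to analyze that moment through a Laplace/large--deviation principle whose rate is an explicit variational problem, and finally to solve the variational problem in closed form. For the reduction, the elementary bounds $ t^{1/3}(-\mu)_+ \le \cost_t(\mu) \le t^{1/3}(-\mu)_+ + \log 2 $ together with the layer--cake identity $ \sum_k (c-\eigen_k(\sao_\beta))_+ = \int_{-\infty}^{c} N(\lambda,\sao_\beta)\,\d\lambda $, where $ N(\lambda,\sao_\beta):=\#\{k:\eigen_k(\sao_\beta)\le\lambda\} $, show that up to a multiplicative factor $ e^{o(t^2)} $,
\begin{align*}
	\Ex\Big[\exp\Big(-L\textstyle\sum_k\cost_t\big(\eigen_k(\sao_\beta)-t^{2/3}\zeta\big)\Big)\Big]
	=
	\Ex\Big[\exp\Big(-Lt^{1/3}\!\int_{-\infty}^{t^{2/3}\zeta}\!N(\lambda,\sao_\beta)\,\d\lambda\Big)\Big] .
\end{align*}
One must check that the discrepancy — carried by the $ O(t^{1/3}) $ eigenvalues within $ O(t^{-1/3}) $ of the threshold $ t^{2/3}\zeta $, plus the exponentially small tail of $ \cost_t $ above it — contributes only $ e^{o(t^2)} $ even after taking the exponential expectation; this is a routine but mildly delicate large--deviation estimate.

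The guiding heuristic, which fixes the form of the answer, is as follows. After rescaling $ x = t^{2/3}y $ and $ \lambda = t^{2/3}\mu $, the eigenvalues $ \mu_k = t^{-2/3}\eigen_k(\sao_\beta) $ are those of $ -t^{-2}\partial_{yy} + y + \tfrac{2}{\sqrt\beta}t^{-1}\til B'(y) $ for a standard Brownian motion $ \til B $. Conditioning $ \til B $ so that $ y+\tfrac{2}{\sqrt\beta}t^{-1}\til B'(y)\approx W(y) $ replaces this by the deterministic Schr\"odinger operator $ \schro_t:=-t^{-2}\partial_{yy}+W $ on $ (0,\infty) $ with Dirichlet condition, whose counting function obeys the semiclassical Weyl law $ t^{-1}\#\{\text{eigenvalues of }\schro_t\le\mu\}\to\tfrac1\pi\int_0^\infty\sqrt{(\mu-W(y))_+}\,\d y $, while the Cameron--Martin cost of the conditioning is $ \exp\!\big(-\tfrac{t^2\beta}{8}\int_0^\infty(W(y)-y)^2\,\d y\big) $. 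Feeding these into the reduced moment and applying the layer--cake identity once more suggests that the limit on the left of~\eqref{eq:thm:main} equals
\begin{align*}
	-\inf_{W}\Big\{\tfrac{2L}{3\pi}\!\int_0^\infty\!(\zeta-W(y))_+^{3/2}\,\d y+\tfrac{\beta}{8}\!\int_0^\infty\!(W(y)-y)^2\,\d y\Big\} .
\end{align*}

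To make this rigorous I would establish matching bounds. For the \emph{lower} bound on the exponential moment, I restrict to the ball $ \{\sup_{[0,t^{2/3}\zeta]}|B-\mathsf b|\le\delta_t\} $ around the (unscaled) optimal profile $ \mathsf b $, with $ \delta_t=o(t^{1/3}) $: Cameron--Martin plus a Gaussian small--ball estimate bound its probability below by $ \exp\!\big(-\tfrac{t^2\beta}{8}\int(W-y)^2-o(t^2)\big) $, and on this event the quadratic--form comparison $ |\langle(\sao_\beta-\schro_t)\psi,\psi\rangle|\lesssim\delta_t\,(\int(\psi')^2)^{1/2}\|\psi\| $ forces $ N(\lambda,\sao_\beta)\le\#\{\text{eigenvalues of }\schro_t\le\lambda\}+o(t) $ uniformly for $ \lambda\lesssim t^{2/3} $, after which Weyl's law and an optimization over $ W $ close the bound. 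The \emph{upper} bound is the main obstacle: one needs, uniformly for $ s $ in compacts (with $ s\to0 $ and $ s\to\infty $ treated separately), $ \Pr\big[\int_{-\infty}^{t^{2/3}\zeta}N(\lambda,\sao_\beta)\,\d\lambda<st^{5/3}\big]\le\exp\big(-t^2\inf\{\tfrac{\beta}{8}\int(W-y)^2:\tfrac{2}{3\pi}\int(\zeta-W)_+^{3/2}\le s\}+o(t^2)\big) $, and then a Laplace integration over the level $ s $ (valid because $ \tfrac{\beta}{8}\int(W-y)^2 $ is a good rate function) yields the matching exponential--moment bound. For this probability estimate I would use the Riccati transform: $ N(\lambda,\sao_\beta) $ is the number of explosions on $ (0,\infty) $ of the diffusion $ \d p=(x-\lambda-p^2)\,\d x+\tfrac{2}{\sqrt\beta}\,\d B $ started from $ p(0^+)=+\infty $; comparison of $ p $ with the deterministic Riccati flows driven by the upper/lower envelopes of the relevant event sandwiches $ N(\lambda,\sao_\beta) $ between counting functions of nearby deterministic operators up to $ o(t) $, and a Cameron--Martin/Gaussian--tail estimate for the probability that $ B $ remains below a prescribed envelope on $ [0,t^{2/3}\zeta] $, optimized over envelopes, produces the stated rate. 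The delicate points are controlling the Riccati comparison uniformly \emph{through} the explosions, where the $ -p^2 $ nonlinearity is singular, and obtaining the deterministic (Sturm/WKB) counting asymptotics with $ o(t) $ error uniformly over the admissible potentials.

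Finally, the variational problem is solved explicitly. Its Euler--Lagrange equation is $ \tfrac{\beta}{4}(W-y)=\tfrac{L}{\pi}(\zeta-W)_+^{1/2} $, so the minimizer has $ W(y)=y $ for $ y\ge\zeta $ and, for $ y<\zeta $, setting $ u=\zeta-W $, obeys $ u+\tfrac{4L}{\pi\beta}\sqrt u=\zeta-y $; changing variables $ y\mapsto u $ and integrating gives the optimal value in closed form, and with $ c:=2L/\beta $ a short simplification identifies it with $ L\,c^{5}\,\rate(-\zeta/c^{2}) $, i.e.\ with the right side of~\eqref{eq:thm:main}. As a consistency check, letting $ L\to0 $ recovers the typical value $ \tfrac{4L}{15\pi}\zeta^{5/2} $ via $ \rate(z)\sim\tfrac{4}{15\pi}(-z)^{5/2} $ as $ z\to-\infty $.
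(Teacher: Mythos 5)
Your reduction to the cost $t^{1/3}\lambda_-$, the identification of the variational problem $\inf_W\{\tfrac{2L}{3\pi}\int(\zeta-W)_+^{3/2}+\tfrac{\beta}{8}\int(W-y)^2\}$, its closed-form solution, and the consistency check all agree with the paper, and your lower-bound strategy (Cameron--Martin tilt toward the optimal profile plus a quadratic-form comparison of the perturbed operator with a deterministic Schr\"odinger operator) is essentially the paper's Step~2--3 of Section~3.1, done there with a piecewise-constant Girsanov drift and the comparison Lemma~\ref{lem:spec:cmp}; that half is sound modulo routine details (one of which you omit: the half-line beyond $t^{2/3}\zeta$, handled in the paper by comparing $\saoo$ with the Airy operator).

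The genuine gap is the upper bound, and it is precisely the point your plan defers rather than resolves. The event $\{\int_{-\infty}^{t^{2/3}\zeta}N(\lambda,\sao_\beta)\,\d\lambda<st^{5/3}\}$ is not an envelope event for $B$, so ``comparison with deterministic Riccati flows driven by the upper/lower envelopes of the relevant event, optimized over envelopes'' does not produce a bound on its probability without an additional covering or monotonicity argument; and the required ingredient --- that $N(\lambda,\sao_\beta)$ is sandwiched by counting functions of \emph{nearby deterministic} operators up to $o(t)$, uniformly over all paths compatible with a cost-$O(t^2)$ deviation --- is exactly the assertion that a locally rough $B$ cannot beat a smooth drift, which is the crux of the problem (fluctuations of $B$ at scales $\lesssim t^{-1/3}$ can shift eigenvalues by order one, so no pathwise WKB/Sturm asymptotic with uniform $o(t)$ error over this class is available). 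The paper circumvents this not by any semiclassical estimate but by an exact spectral inequality: localize to mesoscopic intervals of length $t^\alpha$, pass from Dirichlet to periodic boundary conditions by interlacing (Lemma~\ref{lem:periodic}), and bound the truncated sum of eigenvalues via the variational characterization of Lemma~\ref{lem:minimaxx} tested on the Fourier basis, whose elements have constant modulus, so that the bound depends on $B$ only through its average increment over the interval (Proposition~\ref{prop:key}, together with the identity~\eqref{eq:simple}); this reduces the upper bound to independent scalar Gaussian computations (Lemma~\ref{lem:G2bd}), with the boundary term controlled by the Neumann ground-state tail bound (Lemma~\ref{lem:G1bd}). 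Without this ingredient, or a genuine substitute for it, your upper-bound step restates the difficulty instead of proving it, so the proposal as written does not constitute a proof of Theorem~\ref{thm:main}.
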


The relevant parameters correspond to the r.h.s.\ of~\eqref{eq:BG} are $ \beta=2 $ and $ L=1 $.
Here, we state and prove Theorem~\ref{thm:main} for general $ \beta,L\in (0,\infty) $ to demonstrate the generality of our method.
Further, it has an application in a different setup.
Referring to \cite[Definition~7.1]{barraquand18},
let $ h^\hf(t,x) := \log Z^\hf(t,x) $ denotes the Hopf--Cole solution of the \ac{KPZ} equation 
on half-line $ [0,\infty) $ with boundary parameter $ A=-\frac12 $, with initial condition $ Z^\hf(0,x) = \delta(x) $.
The result~\cite[Theorem~B]{barraquand18} together with the convergence result of half-space ASEP~\cite{parekh17} (which generalizes the result~\cite{corwin16})
yields the identity
\begin{align}
	\label{eq:bbcw}
	\Ex\Big[ \exp\Big(-\frac14 e^{h^\hf(2t,0)+\frac{t}{12}+t\zeta}\Big) \Big]
	=
	\Ex\Big[ \exp\Big( - \frac12\sum_{k=1}^\infty \cost_t(\eigen_{k}(\sao_1)-t^\frac23\zeta) \Big) \Big].
\end{align}
Indeed, the r.h.s.\ of~\eqref{eq:bbcw} corresponds to $ \beta=1 $ and $ L=\frac12 $.
As a corollary of Theorem~\ref{thm:main} we have
\begin{corollary}
\label{cor:goe}
Referring to \cite[Definition~7.1]{barraquand18},
let $ h^\hf(t,x) := \log Z^\hf(t,x) $ denote the Hopf--Cole solution of the \ac{KPZ} equation 
on half-line $ [0,\infty) $ with boundary parameter $ A=-\frac12 $,
with initial condition $ Z^\hf(0,x) = \delta(x) $.
Then, for any fixed $ \zeta>0 $,
\begin{align*}
	\lim_{t\to\infty} 
	\tfrac{1}{t^2} \log \big(
		\Pr\big[ h^{\hf}(2t,0) + \tfrac{t}{12} < -t\zeta \big]
	\big)
	=
	- \tfrac12 \rate(-\zeta).
\end{align*}
\end{corollary}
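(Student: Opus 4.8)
The plan is to obtain Corollary~\ref{cor:goe} from Theorem~\ref{thm:main} by two soft steps, with no new analytic input. The first step is a substitution of parameters combined with the algebraic identity \eqref{eq:bbcw}; the second is the same double-exponential-to-indicator approximation that converts Theorem~\ref{thm:main} into Theorem~\ref{thm:main:}.

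For the first step I would set $\beta=1$ and $L=\tfrac12$ in \eqref{eq:thm:main}. Then $\bigl(\tfrac{2L}{\beta}\bigr)^5=1$ and $\bigl(\tfrac{\beta}{2L}\bigr)^2=1$, so its right-hand side collapses to $-\tfrac12\rate(-\zeta)$, giving
\[
	\lim_{t\to\infty}\frac{1}{t^2}\log\Ex\Bigl[\exp\Bigl(-\tfrac12\sum_{k=1}^\infty\cost_t\bigl(\eigen_{k}(\sao_1)-t^{2/3}\zeta\bigr)\Bigr)\Bigr]=-\tfrac12\rate(-\zeta).
\]
Since \eqref{eq:bbcw} holds for every $\zeta>0$, feeding this limit into the left-hand side of \eqref{eq:bbcw} yields
\[
	\lim_{t\to\infty}\frac{1}{t^2}\log\Ex\Bigl[\exp\Bigl(-\tfrac14 e^{\,h^\hf(2t,0)+\frac{t}{12}+t\zeta}\Bigr)\Bigr]=-\tfrac12\rate(-\zeta),
\]
which I will refer to as $(\ast)$.

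The second step turns $(\ast)$ into the stated tail estimate. For the upper bound, on the event $\{h^\hf(2t,0)+\tfrac{t}{12}<-t\zeta\}$ one has $\tfrac14 e^{\,h^\hf(2t,0)+\frac{t}{12}+t\zeta}<\tfrac14$, hence $\ind_{\{h^\hf(2t,0)+\frac{t}{12}<-t\zeta\}}\le e^{1/4}\exp(-\tfrac14 e^{\,h^\hf(2t,0)+\frac{t}{12}+t\zeta})$; taking expectations, applying $\tfrac1{t^2}\log$, and invoking $(\ast)$ gives $\limsup_{t\to\infty}\tfrac1{t^2}\log\Pr[h^\hf(2t,0)+\tfrac{t}{12}<-t\zeta]\le-\tfrac12\rate(-\zeta)$. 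For the lower bound I would fix $\zeta'>\zeta$, set $\delta:=\tfrac12(\zeta'-\zeta)>0$, and split the expectation in $(\ast)$ — with $\zeta$ replaced by $\zeta'$ — according to whether $h^\hf(2t,0)+\tfrac{t}{12}<-t(\zeta'-\delta)$ or not, bounding $\exp(-\tfrac14 e^x)\le 1$ on the first part and $\exp(-\tfrac14 e^x)\le\exp(-\tfrac14 e^{\delta t})$ on the second. This gives
\[
	\Ex\Bigl[\exp\Bigl(-\tfrac14 e^{\,h^\hf(2t,0)+\frac{t}{12}+t\zeta'}\Bigr)\Bigr]\le\Pr\Bigl[h^\hf(2t,0)+\tfrac{t}{12}<-t\tfrac{\zeta'+\zeta}{2}\Bigr]+\exp\bigl(-\tfrac14 e^{\delta t}\bigr).
\]
Since $\tfrac{\zeta'+\zeta}{2}>\zeta$, the probability on the right is at most $\Pr[h^\hf(2t,0)+\tfrac{t}{12}<-t\zeta]$, while the additive term satisfies $\tfrac1{t^2}\log\exp(-\tfrac14 e^{\delta t})\to-\infty$. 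Applying $(\ast)$ to the left side and then taking $\liminf_{t\to\infty}\tfrac1{t^2}\log$ of both sides yields $\liminf_{t\to\infty}\tfrac1{t^2}\log\Pr[h^\hf(2t,0)+\tfrac{t}{12}<-t\zeta]\ge-\tfrac12\rate(-\zeta')$. Letting $\zeta'\downarrow\zeta$ and using that $z\mapsto\rate(z)$ is continuous (indeed smooth) on $(-\infty,0]$ by \eqref{eq:rate} closes the gap.

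I do not expect a genuine obstacle: all of the analytic content sits in Theorem~\ref{thm:main}. The only points needing care are the arithmetic of $\bigl(\tfrac{2L}{\beta}\bigr)^5$ and $\bigl(\tfrac{\beta}{2L}\bigr)^2$ at $(\beta,L)=(1,\tfrac12)$, the orientation of the inequalities in the shift argument (the event must be enlarged, not shrunk), and the observation that the factor $e^{1/4}$ and the term $\exp(-\tfrac14 e^{\delta t})$ are negligible at speed $t^2$ — so negligible, in fact, that one could equally well cite verbatim the passage from Theorem~\ref{thm:main} to Theorem~\ref{thm:main:}, with \eqref{eq:BG} replaced by \eqref{eq:bbcw}.
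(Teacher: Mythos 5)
Your proposal is correct and follows essentially the same route as the paper: specialize Theorem~\ref{thm:main} to $(\beta,L)=(1,\tfrac12)$ (so that $\bigl(\tfrac{2L}{\beta}\bigr)^5=\bigl(\tfrac{\beta}{2L}\bigr)^2=1$), feed it through the identity~\eqref{eq:bbcw}, and then convert the double-exponential observable into the tail probability -- which is exactly what the paper does via Lemma~\ref{lem:passing} with $X_t=h^\hf(2t,0)+\tfrac{t}{12}$, $b=\tfrac14$, $g(\zeta)=-\tfrac12\rate(-\zeta)$; your inline re-derivation (the $e^{1/4}$ factor for the upper bound, the $\zeta'$-shift plus the negligible $\exp(-\tfrac14 e^{\delta t})$ term and continuity of $\rate$ for the lower bound) is just an unpacked version of that lemma. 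Note also that your parameter choice $(1,\tfrac12)$ is the correct one stated in the introduction, whereas the paper's Section~\ref{sect:pf:main} misprints it as $(\beta,L)=(1,2)$.
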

\noindent
Passing from Theorem~\ref{thm:main} to Theorem~\ref{thm:main:} and Corollary~\ref{cor:goe} is simple, which we do in Section~\ref{sect:pf:main}.
In addition to Theorem~\ref{thm:main:} and Corollary~\ref{cor:goe},
there may be further connection to the processes considered in \cite{gorin18}, but we do not pursue this direction here.


The preceding discussion reduces the \ac{LDP} of the \ac{KPZ} equation
to calculating an exponential moment of the Airy \ac{PP}.
This observation was first used in~\cite{corwin18}, along with certain bounds on the Airy \ac{PP}, to derive bounds on the lower tail probability.
Further, it was noted \cite[Section~2.3]{corwin18} that the rate function $ \rate $ 
can be derived by developing an \ac{LDP} of the Airy \ac{PP} from the known \ac{LDP} of $ \beta $-ensemble \cite{benarous97}.
This scheme was adopted in \cite{corwin18a}. 
Non-rigorously taking an edge scaling of the known rate function $ I_\beta $ \cite[Theorem 1.3]{benarous97} of $ \beta $-ensemble,
\cite{corwin18a} derived an explicit rate function $ I_\text{Airy} $~\cite[Section~A, Supplementary Material]{corwin18a} for the Airy \ac{PP},
and solved a corresponding variational problem to obtain $ \rate $.

The non-rigorous edge scaling from $ I_\beta $ to $ I_\text{Airy} $ is backed by the known weak convergence~\cite{ramirez11} of the $ \beta $-ensemble to the Airy \ac{PP}.
However, justifying this passage at \ac{LDP} level requires convergence up to \emph{exponentially small} probability,
which remains an open problem.
Here, we proceed through a different approach, and completely bypass the need for taking edge scaling from the $ \beta $-ensemble.

\subsection{A heuristic of the proof}
\label{sect:heu}
We give a heuristic of the ideas behind our proof.
The discussion in this subsection is informal, serves only as a conceptual guideline, and will not be used in the rest of the article.

Let $ \calG_t := L \sum_{k=1}^\infty \cost_{t}\big (\eigen_{k}(\sao_\beta) - t^\frac23\zeta ) $
denote the relevant quantity on the r.h.s.\ of~\eqref{eq:thm:main}.
By Varadhan's lemma, analyzing the $ t\to\infty $ behavior of $ \Ex[\exp(-\calG_t)] $ amounts to characterizing the \acp{LD} of $ \calG_t $.
%
With $ B $ being the only random component in $ \sao_\beta $ (see~\eqref{eq:sao}), the quantity $ \calG_t $ is a functional of $ B $.
Therefore, the \acp{LD} of $ \calG_t $ is ultimately a question on \acp{LD} of a functional of the Brownian motion $ B $.
To better express $ \calG_t $ as a functional of $ B $, we use the Riccati transform. 
Let
\begin{align*}
	N(\lambda) := \# \{k\in\N:\eigen_{k}(\sao_\beta) \leq \lambda\}
\end{align*} 
denote the number of eigenvalues of $ \sao_\beta $ at most $ \lambda $, i.e., counting function, 
and consider the solution of the following ODE
\begin{align}
	\label{eq:riccati:heu}
	f'(x) = x - \lambda - f^2(x) + \tfrac{2}{\sqrt{\beta}} B'(x),
	\
	x>0,
	\quad
	f(0) = +\infty. 
\end{align}
Due to the negative, quadratic drift $ -f^2 $, the solution may undergo a few explosions to $ -\infty $,
whence $ f $ is immediately restarted at $ +\infty $.
the Riccati transform asserts (see Section~\ref{sect:tool} for more details) that $ N(\lambda) = \#\{ $explosions of $ f(x) \} $.
We hence view $ f $ and $ N(\lambda) $ as functionals of $ B $ through~\eqref{eq:riccati:heu},
and this gives $ \calG_t $ as a functional of $ B $ through
\begin{align}
	\label{eq:S:heu}
	\calG_t 
	= 
	L\, \int_{\R} \cost_t(\lambda-t^\frac32\zeta) \d N(\lambda)
	=
	- L\,\int_{\R} \cost'_t(\lambda-t^\frac32\zeta) N(\lambda) \d \lambda.
\end{align}

We now need to analyze how deviations of $ B $ affect $ f $ and $ N(\lambda) $.
To this end, it is instructive to first laid down a few scales.
Straightforward differentiations from~\eqref{eq:costt} shows that 
$ \cost'_t(\lambda-t^\frac23\zeta) \approx -t^\frac13 \ind_\set{\lambda<t^{2/3}\zeta} $ for $ t \gg 1 $.
Using this in~\eqref{eq:S:heu}, we see that the relevant $ \lambda $ should be of order $ t^\frac23 $, i.e., $ \lambda = O(t^\frac23) $.
In~\eqref{eq:riccati:heu}, if we ignore the Brownian term $ \frac{2}{\sqrt{\beta}} B'(x) $, explosions of $ f $ occurs only when $ x \leq \lambda $.
This suggests $ x= O(\lambda)=O(t^\frac23) $.
Now, consider a generic $ v\in C[0,\infty) $.
We postulate that, the relevant deviation is $ B(x) $ behaving like a drifted Brownian motion with drift $ t^\frac23 v(t^{-\frac23} x) $.
Here, the $ (t^{-\frac23}x) $ scaling ensures that the drift varies at scale comparable to $ x=O(t^\frac23) $ in~\eqref{eq:riccati:heu},
and the multiplicative factor $ t^{\frac23} $ guarantees that the drift competes at the same level as $ x-\lambda = O(t^\frac23) $.

We henceforward regard $ v $ as the control function of the \acp{LD} in question.
The \ac{LDP} on sample paths of Brownian motion suggests that
\begin{align*}
	\Pr\big[ B'(x) \approx t^\frac23 v(t^{-\frac23}x) \big] 
	\approx
	\exp\Big( - \int_0^\infty \frac{1}2 t^\frac43 v^2(t^{-\frac23}x) \d x \Big)
	=
	\exp\Big( - t^2 \int_0^\infty \frac{1}2 v^2(x) \d x \Big).
\end{align*}
Indeed, $ B $ is not differentiable, and $ B'(x) \approx t^\frac23 v(t^{-\frac23}x) $ 
merely means that $ t^{-\frac43}B(t^\frac23 x) $ 
approximates $ \int_0^x v(y) \d y $ uniformly in $ x $ over compact subsets.
Here, however, we \emph{informally} equate $ B'(x) $ with $ t^\frac23 v(t^{-\frac23} x) $ in~\eqref{eq:riccati:heu} and write
\begin{align*}
	f'_v(x) = -t^\frac23 \big( -t^{-\frac23} x + t^{-\frac23}\lambda  - \tfrac{2}{\sqrt{\beta}} v(t^{-\frac23} x) \big) - f^2_v(x),
	\
	x>0,
	\quad
	f_v(0) = +\infty. 
\end{align*}
This equation can be solved approximately by regarding $ b(x) := -t^{-\frac23} x + t^{-\frac23}\lambda - 2v(t^{-\frac23} x)/\sqrt{\beta}  $
as a locally constant function.
Consider a generic $ b>0 $ and solve for a function $ f_\text{loc} $ that satisfies $ f'_\text{loc} = - t^\frac23 b - f^2_\text{loc} $. 
This gives $ f_\text{loc}(x) = \tan(t^\frac13 b^{\frac12}x+c) $, $ c\in\R $,
which explodes over a period of $ \pi t^{-\frac13} b^{-\frac12} $.
Hence the time lapse between explosions of $ f_v $ near a given point $ x $ is roughly
\begin{align*}
	\tau_v(x) \approx \pi t^{-\frac13} \big( \big(-t^{-\frac23} x + t^{-\frac23}\lambda - \tfrac{2}{\sqrt{\beta}} v(t^{-\frac23} x) \big)_+ \big)^{-\frac12},
\end{align*} 
where $ y_\pm := (\pm y)\vee 0 $ and $ 1/0 := \infty $.
Integrating the reciprocal time lapse $ 1/\tau_v(x) $ over $ x \geq 0 $ gives the total number of explosions:
\begin{align*}
	N(\lambda) = N_v(\lambda)
	\approx \frac{t^{\frac13}}{\pi} \int_0^\infty \big( \big(-t^{-\frac23} x + t^{-\frac23}\lambda - \tfrac{2}{\sqrt{\beta}} v(t^{-\frac23} x) \big)_+ \big)^{\frac12}
	\d x.
\end{align*} 
Now, substituting this approximate expression of $ N(\lambda) $ in~\eqref{eq:S:heu},
together with the aforementioned approximation $ \cost'_t(\lambda-t^\frac23\zeta) \approx -t^\frac13 \ind_\set{\lambda<t^{2/3}\zeta} $,
we arrive at
\begin{align*}
	\calG_t = \calG_{t,v}
	&\approx
	\frac{t^{\frac23} L}{\pi} 
	\int_{-\infty}^{t^\frac23\zeta} 
	\int_0^\infty \big( (-t^{-\frac23} x + t^{-\frac23}\lambda - \tfrac{2}{\sqrt{\beta}} v(t^{-\frac23} x))_+ \big)^{\frac12}
	\,
	\d \lambda \d x
\\
	&=
	t^{2} \frac{2 L}{3\pi} 
	\int_0^\infty \big( (-x + \zeta - \tfrac{2}{\sqrt{\beta}} v(x))_+ \big)^{\frac32}
	\,
	\d x.
\end{align*}

So far we have derived an approximate expression of $ \calG_t=\calG_{t,v} $ as a functional of the control $ v $,
and the `cost' for realizing a given $ v $ is $ \frac{t^2}2 \int_0^\infty v^2(x) \d x $. 
These discussions suggest
\begin{align*}
	\log \big( \Ex\big[ \calG_t \big] \big)
	\approx 
	- 
	t^2\min_{v} \Big\{
		\int_0^\infty 
		\Big( \frac{2L}{3\pi} \big( (-x + \zeta - \tfrac{2}{\sqrt{\beta}} v(x))_+ \big)^{\frac32} + \frac12 v^2(x) \Big) \d x
	\Big\}.
\end{align*}
The minimizer $ v=v_* $ is solved by straightforward variation, giving
\begin{align*}
	v_*(x) = 4 L^2 \pi^{-2}\beta^{-\frac32} \big( -1 + \sqrt{1+(\tfrac{\beta\pi}{2  L})^2(\zeta-x)_+} \big).
\end{align*}
Substitute in $ v=v_* $. After straightforward but tedious calculations, we get\\
$
	\log ( \Ex[ \calG_t ] )
	\approx 
	- 
	t^2 L (\frac{2L}{\beta})^5 \rate(-(\frac{\beta}{2L})^2\zeta).
$

\subsection{Overview of the proof}
\label{sect:outline}
The crucial assumption behind the preceding heuristic is having locally constant drifts.
That is, we postulate that the `optimal strategy' is achieved by having a drift $ t^\frac23 v(t^{-\frac23} x) $ that is locally constant,
and varies at the macroscopic scale $ O(t^{\frac23}) $.
It is \emph{far from clear} why this is the case.
Indeed, with $ B' $ being rough (not function-valued),
local behaviors of $ B $ at scales $ \lesssim  t^{-\frac13} $ could have dramatic effects on the spectrum of $ \sao_\beta $.

Our proof proceeds through a localization procedure.
That is, we partition $ (0,\infty) $ into intervals of length $ t^\alpha $:
$
	\intv_i := (\pt_{i-1},\pt_{i}],
$
$
	\pt_i := i t^\alpha,
$
and counts the number of explosions of the Riccati ODE within each interval $ \intv_i $.
Our analysis works for \emph{any} fixed exponent $ \alpha\in(-\frac13,\frac23) $.
Note that this range exhausts all mesoscopic scales.
As seen in Section~\ref{sect:heu}, $ t^\frac23 $ is the macroscopic scale of $ x $ and $ \lambda $ in~\eqref{eq:riccati:heu},
while $ t^{-\frac13} $ the microscopic scale of typical time lapse $ \tau_v(x) $ between explosions.

To prove Theorem~\ref{thm:main}, we separately establish upper and lower bounds on the l.h.s.\ of~\eqref{eq:thm:main}.
For the lower bound, within each interval $ \intv_i $, we perform a change-of-measure (via Girsanov's theorem)
so that the Brownian motion has drift $ V_i := t^{\frac23} v_*(t^\frac23 \pt_{i-1}) $.
Within $ \intv_i $, the change in the linear potential $ x $ is negligible, 
and can be well-approximated by the constant $ \pt_{i-1} $.
This being the case, the number of explosions (after the change-of-measure) can be estimated by spectral comparison to the shifted Laplace operator
$
	-\frac{\d^2~}{\d x^2} + \pt_{i-1} + \frac{2}{\sqrt{\beta}} V_i.
$
Doing so eventually yields the desired lower bound.

The harder part of the proof is to obtain a matching upper bound.
This is where we address the aforementioned issue---that the `best strategy' is achieved by a locally constant drift.
More precisely, 
we show the `best strategy' is to have $ B $ constantly drifted within each interval $ \intv_i $.
To this end, we first use $ \cost_{t}(\lambda) \approx -t^\frac13 \lambda_- $
to approximate the relevant quantity as a truncated sum of eigenvalues of certain Hill-type operators (see~\eqref{eq:trunc:sum}).
Next, we show in Proposition~\ref{prop:key}
(after passing to periodic boundary condition as done in Lemma~\ref{lem:periodic})
that the truncated sum is dominated by the one with $ B'(x) $ replaced by its average $ \frac{B(\pt_{i})-B(\pt_{i-1})}{|\intv_i|} $.
Key ingredients behind the proof of Proposition~\ref{prop:key} are the variational characterizations built 
in Lemma~\ref{lem:minimaxx} and \eqref{eq:simple}. 

We note here that most part of our proof works even if $ \cost_{t}(\lambda) $ were replaced by a smooth compactly supported function.
However, the aforementioned variational characterizations (Lemma~\ref{lem:minimaxx} and \eqref{eq:simple}) are tailored to truncated sum of eigenvalues,
and hence apply only for the specific cost function $ \cost_{t}(\lambda) \approx -t^\frac13 \lambda_- $.

\subsection{Quantitative bounds}
In this article, we focus on the $ t\to\infty $ asymptotic of the lower tail probability,
and extract the leading order term, i.e., the rate function $ \rate $.
Our analysis, however, allows much room for more quantitative estimates.
As mentioned in Section~\ref{sect:outline}, the partition can take any size $ t^\alpha $ with $ \alpha\in(-\frac13,\frac23) $.
Optimizing over $ \alpha $ (and a few other parameters within our analysis) should lead to a quantitative estimate
on the tail probability in a similar spirit as \cite{corwin18}. 
We do not pursue this direction here. 

\subsection*{Acknowledgements}
LCT thanks Ivan Corwin, Promit Ghosal, and Pei-Ken Hung for useful discussions,
and thanks Ivan Corwin and Yao-Yuan Mao for comments that improve the presentation of this article.
LCT's research was partially supported by a Junior Fellow award from the Simons Foundation, and by the NSF through DMS-1712575.

\subsection*{Outline}
In Section~\ref{sect:tool}, we prepare a few basic tools.
Based on these tools, in Section~\ref{sect:pf:mainmain:} we prove Theorem~\ref{thm:main}.
In Section~\ref{sect:pf:main}, we settle Theorem~\ref{thm:main:} and Corollary~\ref{cor:goe}.

\section{Basic tools}
\label{sect:tool}
Hereafter throughout the rest of the article, we fix $ L,\zeta,\beta\in(0,\infty) $, and drop dependence on these variables.
For example $ \sao:=\sao_\beta $.

We begin by recalling the classical construction of self-adjoint operators via sesquilinear forms.
Consider Hilbert spaces $ \hilb  $ and $ \hilbb $, both over $ \C $,
equipped with inner products $ \ip{\Cdot}{\Cdot}_\hilb $ and $ \ip{\Cdot}{\Cdot}_\hilbb $
and the thus induced norms $ \norm{\Cdot}_\hilb $ and $ \norm{\Cdot}_\hilbb $, and assume the embedding $ \hilbb \subset \hilb $ as vector spaces.
Consider also a symmetric sesquilinear form $ Q: \hilbb\times\hilbb \to \C $.
The \textbf{associated operator} $ T=(T,D(T)) $ of $ Q $ has domain $ D(T) $ consisting of $ v\in\hilbb $ such that
\begin{align}
	\label{eq:asscOp}
	\exists u\in\hilb \text{ such that } Q(v,v')=\ip{u}{v'}_\hilb, \ \forall v'\in\hilbb,
\end{align}
and, for each $ v\in D(T) $, $ Tv:=u $ is defined to be the (necessarily unique) vector $ u\in\hilb $ that satisfies~\eqref{eq:asscOp};
see \cite[Definition~12.14]{grubb08}.
Recall that $ Q $ is \textbf{coercive} with respect to $ \hilbb\subset\hilb $ if, for some fixed constant $ c<\infty $,
\begin{align*}
	\norm{v}^2_\hilbb \leq c\,(\norm{v}^2_{\hilb}+Q(v,v)),	\quad\forall  v\in\hilbb.
\end{align*}
Recall that $ \hilbb $ is \textbf{compactly embedded} in $ \hilb $ if
$ \norm{v}_{\hilbb} \leq c \norm{v}_{\hilb} $, for some fixed constant $ c<\infty $ and all $ v\in\hilbb $,
and if any $ \norm{\Cdot}_{\hilbb} $-bounded sequence has a $ \norm{\Cdot}_{\hilb} $-convergent subsequence.
It is known (c.f., \cite[Corollary~12.19]{grubb08}) that if $ \hilbb\subset\hilb $ compactly and densely and if $ Q $ is coercive,
then the associated operator $ (T,D(T)) $ is self-adjoint and closed, with $ D(T)\subset \hilbb $ being dense in $ \hilb $.
Furthermore, since $ Q $ is coercive and since $ \hilbb\subset\hilb $ compactly and densely,
$ T $ necessarily has a pure-point spectrum that is bounded below and has no limit points, i.e., $ -\infty < \lambda_1 \leq \lambda_2 \leq \ldots\to\infty $,
with the corresponding eigenvectors forming a complete basis (i.e., dense orthonormal set) of $ \hilb $.
We will call such self-adjoint operators \textbf{standard}.

In the following we will consider quadruples $ (T,Q,\hilbb\subset\hilb) $,
where $ Q $ is a symmetric sesquilinear form on $ \hilbb $ and $ T $ is the associated operator.
The preceding discussion is summarized as follows
\begin{proposition}
\label{prop:op}
Fix a quadruple $ (T,Q,\hilbb\subset\hilb) $ described as in the preceding.
If $ \hilbb\subset\hilb $ compactly and densely, and if $ Q $ is coercive,
then $ T $ is \textbf{standard}:
self-adjoint and has a pure-point spectrum that is bounded below and has no limit points, i.e., $ -\infty < \lambda_1 \leq \lambda_2 \leq \ldots\to\infty $,
with the corresponding eigenvectors forming a complete basis of $ \hilb $.
\end{proposition}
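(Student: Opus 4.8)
The plan is to deduce the proposition from the spectral theorem for compact self-adjoint operators applied to (a shift of) the resolvent of $T$; indeed, the self-adjointness, closedness and density statements are precisely \cite[Corollary~12.19]{grubb08}, so the substance of a proof is to make the reduction explicit. First I would produce the resolvent. By coercivity there is a finite $c_0$ with $\norm{v}^2_\hilbb\le c\,Q_{c_0}(v,v)$ for all $v\in\hilbb$, where $Q_{c_0}(v,v'):=Q(v,v')+c_0\ip{v}{v'}_\hilb$; the associated operator of $Q_{c_0}$ is $T+c_0 I$. The form $Q_{c_0}$ is nonnegative on $\hilbb$, hence --- by Cauchy--Schwarz for the Hermitian form $Q_{c_0}$, together with $\norm{v}^2_\hilbb\le c\,Q_{c_0}(v,v)$ and the comparison $\norm{\Cdot}_\hilb\le c'\norm{\Cdot}_\hilbb$ built into compact embedding --- bounded and coercive on the Hilbert space $\hilbb$. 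Applying Lax--Milgram to $Q_{c_0}$ and to the bounded functional $v'\mapsto\ip{u}{v'}_\hilb$, I obtain for each $u\in\hilb$ a unique $Ru\in\hilbb$ with $Q_{c_0}(Ru,v')=\ip{u}{v'}_\hilb$ for all $v'\in\hilbb$, and $R\colon\hilb\to\hilbb$ bounded. By the defining property~\eqref{eq:asscOp}, this says exactly that $Ru\in D(T)$ with $(T+c_0 I)Ru=u$ and that every element of $D(T)$ is such an $Ru$; hence $T+c_0 I\colon D(T)\to\hilb$ is a bijection with two-sided inverse $R$, and $D(T)=R(\hilb)\subset\hilbb$.

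Next I would check that $R$ is compact, self-adjoint and positive. Compactness is immediate from factoring $R$ as $\hilb\xrightarrow{R}\hilbb\hookrightarrow\hilb$, a bounded map followed by the compact inclusion. For the other two, writing $v_j:=Ru_j\in D(T)$ and using~\eqref{eq:asscOp} and the symmetry $Q(v,v')=\overline{Q(v',v)}$,
\begin{align*}
	\ip{Ru_1}{u_2}_\hilb
	&=\ip{v_1}{(T+c_0 I)v_2}_\hilb=\overline{Q_{c_0}(v_2,v_1)}=Q_{c_0}(v_1,v_2)\\
	&=\ip{(T+c_0 I)v_1}{v_2}_\hilb=\ip{u_1}{Ru_2}_\hilb ,
\end{align*}
so $R=R^*$; and taking $u_1=u_2=u\ne0$ in the same chain gives $\ip{Ru}{u}_\hilb=Q_{c_0}(Ru,Ru)\ge c^{-1}\norm{Ru}^2_\hilbb>0$ since $R$ is injective, so $R$ is a positive operator and $0$ is not an eigenvalue of $R$.

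Finally I would invoke the spectral theorem: there is an orthonormal basis $\set{e_k}_{k\ge1}$ of $\hilb$ with $Re_k=\mu_k e_k$, and by positivity and compactness we may order $\mu_1\ge\mu_2\ge\cdots>0$ with $\mu_k\to 0$, each value of finite multiplicity. Setting $\lambda_k:=\mu_k^{-1}-c_0$ gives $-c_0\le\lambda_1\le\lambda_2\le\cdots\to\infty$, with $e_k\in R(\hilb)=D(T)$ and $Te_k=\lambda_k e_k$. Since any eigenpair of $T$ pushes forward to one of $R$ (note $T+c_0 I$ is injective, so no eigenvalue of $T$ equals $-c_0$), the spectrum of $T$ is exactly $\set{\lambda_k}_{k\ge1}$ --- pure point, bounded below by $-c_0$, with no finite accumulation point --- and $\set{e_k}$ is a complete orthonormal set in $\hilb$; $D(T)=R(\hilb)\subset\hilbb$ is dense since it contains the $e_k$. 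Self-adjointness and closedness of $T=R^{-1}-c_0 I$ then follow from $R$ being bounded, injective and self-adjoint with dense range, or may be quoted directly from \cite[Corollary~12.19]{grubb08}.

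The main obstacle I anticipate is not analytic but bookkeeping: checking that the Lax--Milgram solution operator really has range equal to $D(T)$ in the precise sense of~\eqref{eq:asscOp}, so that the $T$ of the statement coincides with $R^{-1}-c_0 I$, and keeping the conjugations in $Q(v,v')=\overline{Q(v',v)}$ straight when establishing $R=R^*$ and positivity. Once these are in place the rest is a textbook application of the spectral theorem for compact self-adjoint operators.
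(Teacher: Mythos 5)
The paper offers no proof of this proposition: it is stated as a summary of the classical form-to-operator construction and is justified solely by the citation of \cite[Corollary~12.19]{grubb08}. Your write-up is exactly the standard argument underlying that citation---shift the form by $c_0\ip{\cdot}{\cdot}_\hilb$, solve $Q_{c_0}(Ru,v')=\ip{u}{v'}_\hilb$ to get a bounded inverse $R=(T+c_0I)^{-1}$, observe $R$ is compact via the embedding, self-adjoint, positive and injective, and then read off the spectrum of $T$ from the spectral theorem for compact self-adjoint operators. All of that bookkeeping (identification of the Lax--Milgram solution operator with the inverse of the associated operator, the conjugation chain proving $R=R^*$, positivity, $\lambda_k=\mu_k^{-1}-c_0$, density of $D(T)$) is correct, and you also read the compact-embedding hypothesis in the correct direction $\norm{\cdot}_\hilb\le c\norm{\cdot}_\hilbb$ (the displayed inequality in the paper's definition is a typo).

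The one step that does not hold as you justify it is the assertion that $Q_{c_0}$ is \emph{bounded} on $\hilbb$. Cauchy--Schwarz for the nonnegative Hermitian form only gives $|Q_{c_0}(u,v)|\le Q_{c_0}(u,u)^{1/2}Q_{c_0}(v,v)^{1/2}$, and coercivity bounds $Q_{c_0}(v,v)$ from \emph{below} by $\norm{v}^2_\hilbb$; neither produces the upper bound $Q_{c_0}(v,v)\le C\norm{v}^2_\hilbb$ that Lax--Milgram on $(\hilbb,\norm{\cdot}_\hilbb)$ needs (equivalently, without it the form norm need not be complete on $\hilbb$, so the Riesz-representation variant also stalls). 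Continuity of the form on $\hilbb$ is a genuinely additional hypothesis---it is a standing assumption of the Lax--Milgram framework in \cite{grubb08} that the proposition implicitly inherits through the phrase ``described as in the preceding,'' and it does hold for the forms the paper actually uses (for $Q_\pot$ it follows from the bound preceding \eqref{eq:coar}, and for $Q_{\text{SAO}}$ from \cite[Lemma~4.5.44]{anderson10})---but it cannot be derived from coercivity plus compact embedding alone. State it as a hypothesis (or verify it for the concrete forms) rather than claim it follows; with that amendment your proof is complete.
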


Now, to construct the \ac{SAO}~\eqref{eq:sao}, we let $ \hilb = L^2[0,\infty) $, and 
\begin{align}
	\label{eq:L*}
	\hilbb = L_* := \Big\{ f\in H^1[0,\infty) : f(0)=0, \ \int_0^\infty \big( |f'(x)|^2 + (1+x)|f(x)|^2 \big) \, \d x <\infty \Big\},
\end{align}
equipped with the inner product $ \ip{f}{g}_{L_*} := \int_0^\infty (f'(x) \bar{g}'(x) + (1+x)f(x)\bar{g}(x)) \d x $.
It is standard to check that $ L_* \subset L^2[0,\infty) $ compactly and densely.
Now define the symmetric sesquilinear form $ Q_\text{SAO}: L_*\times L_* \to \C $
\begin{align}
	\label{eq:saoform}
	Q_{\text{SAO}}(f,g) := \int_0^\infty \Big( f'(x) \bar{g}'(x) + \Big(x+ \frac{2}{\sqrt{\beta}}\Big) f(x) \bar{g}(x) B'(x) \Big) \, \d x,
\end{align}
where, with $ f,g\in L_* $, the integral against $ B'(x) $ is understood in the integration-by-parts sense.
Recall from \cite{ramirez11} (see also \cite[Lemma~4.5.44~(b)]{anderson10}) that, almost surely, $ Q_{\text{SAO}} $ is coercive with respect to $ L_*\subset L^2[0,\infty) $.
Given these properties, we let $ \sao $ be the associated operator of $ Q_{\text{SAO}} $, which, by Proposition~\ref{prop:op}, is standard.

Aside from the \ac{SAO}, we will also consider operators of the form
$ -\frac{\d^2~}{\d x^2} + \frac{2}{\sqrt{\beta}} \pot'(x) $, on $ x\in[a,b] $,
for $ \pot \in C[a,b] $, and with Dirichlet boundary condition at $ x=a,b $.
To define such an operator, take $ \hilb = L^2[a,b] $ and $ \hilbb = H^1_0[a,b] := \{f\in H^1[a,b] : f(a)=b(b)=0\} $,
and define
\begin{align}
	\label{eq:schro:form}
	Q_\pot(f,g) := \int_a^b \Big( f'(x) \bar{g}'(x) + \frac{2}{\sqrt{\beta}} f(x) \bar{g}(x) \pot'(x) \Big) \, \d x,
\end{align}
where, for $ f,g\in H^1_0[a,b] $, the integral against $ \pot'(x) $ is understood in the integration-by-parts sense.
Indeed, $ H^1[a,b]\subset L^2[a,b] $ compactly and densely.
For continuous $ \pot $, we show in~\eqref{eq:coar} in the following that $ Q_\pot $ is coercive with respect to $ H^1_0[a,b] \subset L^2[a,b] $.
Given these properties, we let 
\begin{align}
	\label{eq:schro}
	\schro := -\frac{\d^2~}{\d x^2} + \frac{2}{\sqrt{\beta}} \pot'(x),
	\quad
	x\in (a,b),	
	\text{ with Dirichlet BC}
\end{align}
be the operator associated of $ Q_\pot $, which, by Proposition~\ref{prop:op}, is standard.
One particular $ \pot $ we will consider is $ \pot(x)=\frac{2}{\sqrt{\beta}}B(x) $, which gives the Hill operator:
\begin{align}
	\label{eq:hill}
	\hill_{[a,b]} := -\frac{\d^2~}{\d x^2} + \frac{2}{\sqrt{\beta}} B'(x),
	\quad
	x\in (a,b),
	\text{ with Dirichlet BC}.
\end{align}

For a standard operator $ T $, we will often adopt the notation $ \eigen_{k}(T) $ for its $ k $-th eigenvalue,
starting with index $ k=1 $.
For $ (T,Q,\hilbb\subset\hilb) $ satisfying the properties of Proposition~\ref{prop:op}, we have the minimax principle:
\begin{align}
	\label{eq:minimax}
	\eigen_k(T) = \min \Big\{ \max_{v\in\hilbmm, \norm{v}_\hilb=1} \{ Q(v,v) \} : \hilbmm \text{ k-dim.\ subspace of } \hilbb \Big\}.
\end{align}
This principle yields a useful comparison for the spectra of operators of the type~\eqref{eq:schro}.
\begin{lemma}
\label{lem:spec:cmp}
Fix a finite interval $ [a,b] $ and continuous functions $ \pot_i \in C[a,b] $, $ i=1,2 $.
Let $ \schro_i $ be the operators as in~\eqref{eq:schro} with $ \pot_i $ in place of $ \pot $.
We have
\begin{align*}
	\eigen_{n}(\schro_1) 
	\leq
	\tfrac{(\kappa+1)^3}{\kappa^3} \eigen_{n}(\schro_2) 
	+
	\big( \tfrac{(\kappa+1)^2}{\kappa^3} U_2^2 + \kappa^2 U^2_{12} \big),
	\quad
	\kappa >0,
	\
	n=1,2,\ldots,
\end{align*}
where $ U_2:= \sup_{x\in [a,b]} |\pot_2(x)| $ and $ U_{12} := \sup_{x\in [a,b]} |\pot_1(x)-\pot_2(x)| .$
\end{lemma}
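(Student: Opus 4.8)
The plan is to control the quadratic form $Q_{\pot_1}$ from above by a scalar multiple of $Q_{\pot_2}$ plus a constant, and then invoke the minimax principle~\eqref{eq:minimax}. First I would write, for $f \in H^1_0[a,b]$,
\begin{align*}
	Q_{\pot_1}(f,f)
	=
	Q_{\pot_2}(f,f)
	+
	\frac{2}{\sqrt{\beta}} \int_a^b |f(x)|^2 \big( \pot_1'(x) - \pot_2'(x) \big) \, \d x,
\end{align*}
where both $\pot_i'$ integrals are read in the integration-by-parts sense. The point is that the difference term involves only $\pot_1 - \pot_2$, whose sup norm is $U_{12}$; integrating by parts moves the derivative onto $|f|^2$, producing $\int_a^b (\pot_1 - \pot_2)(x)\, (|f|^2)'(x)\,\d x$, which by Cauchy--Schwarz is bounded by $2 U_{12} \norm{f}_{L^2} \norm{f'}_{L^2}$. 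Similarly the $\pot_2'$ contribution inside $Q_{\pot_2}(f,f)$ itself is bounded in terms of $U_2$; this is exactly the coercivity estimate~\eqref{eq:coar} referenced in the text. So the heart of the matter is a sequence of Cauchy--Schwarz/Young inequalities relating $\int |f|^2 |\pot'|$-type terms to $\norm{f'}_{L^2}^2$ and $\norm{f}_{L^2}^2$.

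Next I would introduce the parameter $\kappa$ via Young's inequality in the form $2 U_{12} \norm{f}_{L^2}\norm{f'}_{L^2} \le \kappa^{-1} c\, \norm{f'}_{L^2}^2 + \kappa\, c^{-1} U_{12}^2 \norm{f}_{L^2}^2$ with $c$ chosen so that the $\norm{f'}_{L^2}^2$ coefficient is what is needed to be absorbed. Concretely, I want to arrange
\begin{align*}
	Q_{\pot_1}(f,f)
	\le
	\frac{(\kappa+1)^3}{\kappa^3}\, Q_{\pot_2}(f,f)
	+
	\Big( \frac{(\kappa+1)^2}{\kappa^3} U_2^2 + \kappa^2 U_{12}^2 \Big) \norm{f}_{L^2}^2
	\qquad \text{for all } f \in H^1_0[a,b].
\end{align*}
To get this I first need a lower bound $Q_{\pot_2}(f,f) \ge \theta \norm{f'}_{L^2}^2 - C \norm{f}_{L^2}^2$ for a suitable $\theta \in (0,1)$ and $C$ depending on $U_2$; this comes from bounding $\tfrac{2}{\sqrt\beta}\int |f|^2 \pot_2'$ by (integration by parts) $2 U_2 \norm{f}_{L^2}\norm{f'}_{L^2}$ and applying Young with a weight chosen to leave, say, coefficient $\tfrac{\kappa}{\kappa+1}$ on $\norm{f'}_{L^2}^2$, at the cost of a $\tfrac{\kappa+1}{\kappa} U_2^2 \norm{f}_{L^2}^2$ term. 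Bookkeeping the constants so the powers of $(\kappa+1)/\kappa$ come out as displayed is the only delicate part, and I would not belabor it here.

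Finally, once the form inequality holds for all $f \in H^1_0[a,b]$, I apply~\eqref{eq:minimax}: for any $n$-dimensional subspace $\hilbmm \subset H^1_0[a,b]$, taking the max of $Q_{\pot_1}$ over the unit sphere of $\hilbmm$ and using the pointwise-in-$f$ inequality gives
\begin{align*}
	\max_{v \in \hilbmm, \norm{v}_{L^2}=1} Q_{\pot_1}(v,v)
	\le
	\frac{(\kappa+1)^3}{\kappa^3} \max_{v \in \hilbmm, \norm{v}_{L^2}=1} Q_{\pot_2}(v,v)
	+
	\Big( \frac{(\kappa+1)^2}{\kappa^3} U_2^2 + \kappa^2 U_{12}^2 \Big);
\end{align*}
minimizing over $\hilbmm$ and using that the same family of subspaces is used for both operators yields $\eigen_n(\schro_1) \le \tfrac{(\kappa+1)^3}{\kappa^3}\eigen_n(\schro_2) + (\tfrac{(\kappa+1)^2}{\kappa^3}U_2^2 + \kappa^2 U_{12}^2)$, as claimed. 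I expect the main obstacle to be purely organizational: choosing the Young-inequality weights so that, after combining the lower bound on $Q_{\pot_2}$ with the upper bound on the difference term, the constants assemble into precisely the stated powers of $(\kappa+1)/\kappa$ rather than some equivalent but messier expression. There is no analytic subtlety beyond the standard fact that $C[a,b] \subset L^\infty[a,b]$ and the compact, dense embedding $H^1_0[a,b] \subset L^2[a,b]$ already recorded in the text.
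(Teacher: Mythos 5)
Your proposal is correct and takes essentially the same route as the paper: split off the difference term $\int_a^b |f|^2(\pot_1'-\pot_2')\,\d x$, bound it after integration by parts by $2U_{12}\norm{f}_{L^2}\norm{f'}_{L^2}$ and a $\kappa$-weighted Young inequality, absorb the resulting $\norm{f'}_{L^2}^2$ via the coercivity bound~\eqref{eq:coar} for $Q_{\pot_2}$ (which costs a $U_2^2$ term), and finish with the minimax principle~\eqref{eq:minimax}. The bookkeeping you defer indeed closes with the weights $r=\kappa^2$ for the $U_{12}$ term and $r=\kappa+1$ for the $U_2$ term, which is exactly the paper's choice.
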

\begin{proof}
To simplify notation, we write $ H^1_0 := H^1_0[a,b] $ and $ L^2 := L^2[a,b] $.
For $ \pot\in C[a,b] $, we write $ U_\pot:= \sup_{x\in [a,b]} |\pot(x)| $.
Let $ f\in H^1_0[a,b] $ and $ r>0 $.
Applying the inequality $ 2|a_1a_2| \leq |a_1|^2+|a_2|^2 $ for $ a_1 = r^{\frac12}f'(x) $ and $ a_2=r^{-\frac12} f(x) \pot(x) $, we have
\begin{align*}
	\Big| \int_a^b |f(x)|^2 J'(x) \d x \Big|
	&:=
	\Big| -\int_a^b \big(f(x)\bar{f}'(x)+\bar{f}(x) f'(x)\big) \pot(x) \d x \Big|
\\
	&
	\leq
	\int_a^b \Big( r^{-1}|f'(x)|^2 + r \, |\pot(x)f(x)|^2 \Big) \, \d x
	\leq
	r^{-1} \norm{f'}^2_{L^2} + r \, U_\pot^2 \, \norm{f}^2_{L^2}.
\end{align*}
Setting $ (\pot,r)=(\pot_1-\pot_2,\kappa^2) $ and $ (\pot,r)=(\pot_2,\kappa+1) $ gives
\begin{align}
	\label{eq:formcmp}
	Q_{\pot_1}(f,f)
	\leq
	Q_{\pot_2}(f,f)
	+
	\kappa^{-2} \norm{f'}^2_{L^2} + \kappa^2 U_{12}^2 \norm{f}^2_{L^2},
\end{align}
and 
$  
	Q_{\pot_2}(f,f)
	\geq
	\norm{f'}^2_{L^2} - \frac{1}{\kappa+1} \norm{f'}^2_{L^2} - (\kappa+1) U^2_2 \norm{f}^2_{L^2}.
$
The latter is rearranged as
\begin{align}
	\label{eq:coar}
	\norm{f'}^2_{L^2} &\leq \tfrac{\kappa+1}{\kappa} Q_{\pot_2}(f,f) + \tfrac{(\kappa+1)^2}{\kappa} U_{2}^2 \norm{f}^2_{L^2}.
\end{align}
Inserting~\eqref{eq:coar} into~\eqref{eq:formcmp} gives
\begin{align*}
	Q_{\pot_1}(f,f)
	&\leq
	\big(1+\tfrac{\kappa+1}{\kappa^3}\big) Q_{\pot_2}(f,f) 
	+
	\big( \tfrac{(\kappa+1)^2}{\kappa^3} U_2^2 + \kappa^2 U^2_{12} \big) \norm{f}^2_{L^2}
\\
	&\leq
	\tfrac{(\kappa+1)^3}{\kappa^3} Q_{\pot_2}(f,f) 
	+
	\big( \tfrac{(\kappa+1)^2}{\kappa^3} U_2^2 + \kappa^2 U^2_{12} \big) \norm{f}^2_{L^2}.
\end{align*}
This together with the minimax principle~\eqref{eq:minimax} yields the desired result.
\end{proof}

We will also use the following variational characterization of sums of eigenvalues.
\begin{lemma}
\label{lem:minimaxx}
For $ (T,Q,\hilbb\subset\hilb) $ satisfying the properties of Proposition~\ref{prop:op},
we have
\begin{align*}
	\sum_{k=1}^n \eigen_k(T) = \min \Big\{ \sum_{k=1}^n Q(v_k,v_k) : \{v_1,\ldots,v_n\} \subset \hilbb \text{ orthonormal in } \hilb \Big\}.
\end{align*}
\end{lemma}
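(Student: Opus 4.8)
\textbf{Proof plan for Lemma~\ref{lem:minimaxx}.}
The plan is to prove the two inequalities separately, using the spectral decomposition of $T$ provided by Proposition~\ref{prop:op}. Let $\{e_k\}_{k=1}^\infty \subset \hilbb$ be an orthonormal basis of $\hilb$ consisting of eigenvectors of $T$, with $Te_k = \eigen_k(T)\,e_k$ and $\eigen_1(T)\le \eigen_2(T)\le\cdots$. For the easy direction ($\le$), I would simply take $v_k = e_k$ for $k=1,\ldots,n$: these are orthonormal in $\hilb$, lie in $\hilbb$, and $Q(e_k,e_k) = \ip{Te_k}{e_k}_\hilb = \eigen_k(T)$, so the infimum on the right-hand side is at most $\sum_{k=1}^n \eigen_k(T)$. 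One should also record that the minimum is attained, which this choice already shows.

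For the harder direction ($\ge$), I would take an arbitrary orthonormal set $\{v_1,\ldots,v_n\}\subset\hilbb$ and expand each $v_j = \sum_{k\ge 1} c_{jk} e_k$ with $c_{jk} = \ip{v_j}{e_k}_\hilb$. Since $v_j\in\hilbb$ and the $e_k$ diagonalize $Q$ on $\hilbb$ (one needs $Q(v_j,v_j) = \sum_k \eigen_k(T)\,|c_{jk}|^2$, which follows from the form being closed and coercive and from $e_k$ being a $Q$-orthogonal basis — a routine approximation argument using that the partial sums of the eigenexpansion converge in the form norm), we get
\begin{align*}
	\sum_{j=1}^n Q(v_j,v_j) = \sum_{j=1}^n \sum_{k\ge 1} \eigen_k(T)\,|c_{jk}|^2 = \sum_{k\ge 1} \eigen_k(T)\,\mu_k,
	\qquad
	\mu_k := \sum_{j=1}^n |c_{jk}|^2.
\end{align*}
The coefficient vector $\mu = (\mu_k)_{k\ge1}$ satisfies $0\le\mu_k\le 1$ for every $k$ (because $\{v_j\}$ is orthonormal, so the map $v_j\mapsto (c_{jk})_k$ is an isometry onto an $n$-dimensional subspace of $\ell^2$, and $\mu_k$ is the $k$-th diagonal entry of the associated rank-$n$ orthogonal projection, hence in $[0,1]$) and $\sum_k \mu_k = \sum_j \norm{v_j}_\hilb^2 = n$.

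It then remains to observe the elementary fact that minimizing $\sum_k \eigen_k(T)\,\mu_k$ over all sequences with $0\le\mu_k\le1$ and $\sum_k\mu_k=n$ is achieved by putting full mass on the $n$ smallest eigenvalues, i.e.\ $\mu_k=\ind_{\set{k\le n}}$, giving the lower bound $\sum_{k=1}^n\eigen_k(T)$. This is a standard rearrangement/bathtub argument: if some $\mu_k<1$ with $k\le n$ while some $\mu_\ell>0$ with $\ell>n$, shifting mass from index $\ell$ to index $k$ does not increase the sum since $\eigen_k(T)\le\eigen_\ell(T)$. I expect the only genuinely delicate point to be the justification that $Q(v,v) = \sum_k \eigen_k(T)|c_k|^2$ for all $v\in\hilbb$ (including handling the possibility that only finitely many $\eigen_k(T)$ are negative so the sum is well-defined in $(-\infty,\infty]$); everything else is bookkeeping. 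Combining the two inequalities completes the proof.
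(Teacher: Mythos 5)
Your proposal is correct and follows essentially the same route as the paper: expand an arbitrary orthonormal family in the eigenbasis, identify $\sum_j Q(v_j,v_j)$ as a weighted sum of eigenvalues with weights in $[0,1]$ summing to $n$, conclude by the rearrangement/bathtub observation, and attain the minimum with the first $n$ eigenvectors. The only cosmetic difference is that the paper justifies the diagonalization identity $Q(v,v)=\sum_k \eigen_k(T)|c_k|^2$ by first shifting $T$ by a constant to make $Q$ elliptic, which is the same device you allude to when handling the finitely many negative eigenvalues.
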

\begin{proof}
To simplify notation we write $ \eigen_k(T) = \lambda_{k} $ throughout this proof.
Let $ u_1,u_2,\ldots  $ denote the corresponding orthonormal eigenvectors.
Since $ \lambda_1>-\infty $, by shifting  $ T \mapsto T + c $ and $ Q(v,v') \mapsto Q(v,v')+c\ip{v}{v'}_\hilb $,
we may assume without lost of generality that $ T $ is positive and $ Q $ is elliptic, i.e., $ \norm{v}_{\hilbb}^2 \leq c' Q(v,v) $.
Given any set $ \{v_1,\ldots,v_n\}\subset\hilbb $ that is orthonormal in $ \hilb $, expand each vector into the eigenbasis
$
	v_k = \sum_{i=1}^\infty a^i_k u_i,
$
$
	a^i_k :=  \langle v_k,u_i \rangle_\hilb.
$
Using this we have
\begin{align}
	\label{eq:weight:lambda}
	\sum_{k=1}^n Q(v_k,v_k) 
	=
	\sum_{k=1}^n Q\Big(\sum_{i=1}^\infty a^i_k u_i,\sum_{i'=1}^\infty a^{i'}_k u_{i'}\Big) 	
	=
	\sum_{k=1}^n \sum_{i,i'=1}^\infty a^i_k \bar{a}^{i'}_k Q(u_i,u_{i'}) 
	=
	\sum_{i=1}^\infty\sum_{k=1}^n  |a^i_k|^2 \lambda_i,
\end{align}
where, in the second equality we exchanged infinite sums with $ Q $, which is justified by $ Q $ being elliptic.
Put differently, \eqref{eq:weight:lambda} states that
$ \sum_{k=1}^n Q(v_k,v_k) $ is given by a weighted average of the eigenvalues, with weight $ w_n := \sum_{k=1}^n  |a^i_k|^2 $.
Moreover, the total amount of weight is fixed:
\begin{align*}
	\sum_{n=1}^\infty w_n 
	= 
	\sum_{k=1}^n \sum_{i=1}^\infty |a^i_k|^2
	=
	\sum_{k=1}^n \norm{v_i}^2_{\hilb} = n.
\end{align*}
Given this constraint, to minimize~\eqref{eq:weight:lambda},
it is desirable to allocate more weights to smaller eigenvalues.
On the other hand, each eigenvalue cannot receive weight more than $ 1 $:
\begin{align*}
	w_n 
	= 
	\sum_{k=1}^n \big| \ip{v_n}{u_i}_{\hilb} \big|^2
	\leq
	\norm{u_i}^2_{\hilb} =1,
\end{align*}
where the inequality follows because $ \{v_1,\ldots,v_n\} $ is orthonormal.
Combining the preceding properties, 
we see that the quantity in~\eqref{eq:weight:lambda} cannot be smaller than $ \sum_{k=1}^n\lambda_k $.
Conversely, for $ v_k=u_k $, $ k=1,\ldots,n $, we indeed have $ \sum_{k=1}^n Q(u_k,u_k)=\sum_{k=1}^n\lambda_k $.
\end{proof}

A useful tool for analyzing the eigenvalue distribution is the Riccati transform.
To begin with, the eigenvalue problem for $ \sao $ reads 
\begin{align}
	\label{eq:SAO}
	g''(x) = \tfrac{2}{\sqrt{\beta}} g(x) B'(x) + (x-\eigen) g(x),
	\
	x>0,
\end{align}
understood in the integration-by-parts sense.
Namely, we say $ g\in L_* $ (defined in~\eqref{eq:L*}) solves~\eqref{eq:SAO} 
if it holds upon integrating against any test function $ p(x)\in C^\infty_c[0,\infty) $,
under the interpretation $ \int_0^\infty p(x) g(x) B'(x) \d x := - \int_0^\infty (p'(x) g(x) + p(x)g'(x)) B(x) \d x $. 
The Riccati transform $ f(x) := g'(x)/g(x) $ brings the second order equation~\eqref{eq:SAO} into a first order one
\begin{align*}
	f'(x) = x- \eigen - f^2(x) - \tfrac{2}{\sqrt{\beta}} B'(x),
	\
	x>0.
\end{align*}
More generally, instead of taking an eigenvalue $ \eigen $ of $ \sao $,
we consider a generic $ \lambda \in\R $, regarded as a tunable parameter of the first order equation:
\begin{align}
	\label{eq:riccati}
	f'(x) = x- \lambda - f^2(x) + \tfrac{2}{\sqrt{\beta}} B'(x),
	\
	x>0.
\end{align}
With $ B'(x) $ not being function-valued, we make sense of~\eqref{eq:riccati} by integrating in $ x $.
Note that, due to the negative, quadratic drift $ -f^2(x) $, the solution $ f(x) $ may undergo explosions to $ -\infty $,
so we integrate only over intervals that does not contain such explosions:
\begin{align}
\tag{\ref*{eq:riccati}'}
\label{eq:riccati:int}
\begin{split}
	f(x)\big|_{x_1}^{x_2}
	= 
	\int_{x_1}^{x_2} \big( x-\lambda- f^2(x) \big) \d x 
	&+ 
	\tfrac{2}{\sqrt{\beta}} B(x) \big|_{x_1}^{x_2},
\\
	&
	\notag
	[x_1,x_2] \subset [0,\infty) \text{ such that no explosions occur in } [x_1,x_2].
\end{split}
\end{align}
For a given initial condition $ f_0\in \R $, it is readily checked that
\eqref{eq:riccati:int} permits a unique $ C([0,\tau_1)) $-valued solution $ f $ 
with $ f(0)=f_0 $ until the first explosion time $ \tau_1 $ of $ f $.
We will also consider $ f_0 = +\infty $, which is understood as $ \lim_{x\to 0^+} f(x) = +\infty $.
It is not hard to show that, existence and uniqueness (up to first explosion) holds also for $ f_0=\infty $.
At each explosion $ \tau_n $ to $ -\infty $, we immediately restart $ f $ at $ f(\tau_n)=+\infty $.

Given the prescribed explosion structure, it is convenient to view $ f $ 
as taking value in a countable disjoint union of $ \R $, i.e., 
\begin{align*}
	f\in \R_{-1} \cup \R_{-2} \cup \R_{-3} \cup \ldots := \R_{-\N},
\end{align*}
with each component $ \R_{-i} $ keeping track of the value of $ f $ between the $ (i-1) $-th and $ i $-th explosions.
To define the topology and ordering on $ \R_{-\N} $,
take an order-preserving homeomorphism $ u: \R\to (0,1) $ (e.g., $ u(x) := (\arctan(x)+1)/\pi $),
and consider the map $ \til{u}: \R_{-\N}\to(0,\infty) $: $ \til{u}(x,n):= u(x)-n-1 $.
That is, each $ \R_{-i} $ is mapped into $ (n-1,n) $ in an order-preserving and homeomorphic manner.
We endow the space $ \R^*_{-\N} $ with the pull-back topology and ordering through $ \til{u} $.
Indeed, the latter is simply lexicographical ordering, i.e., $ (x,n) > (x',n') $ if $ n>n' \in -\N $, and $ (x,n) \geq (x',n) $ if $ x \geq x'\in\R $.

We now recall known properties on the Riccati transform that will be used subsequently.
Hereafter, for a standard operator $ T $, we let $ N(\lambda,T) $ denote the counting function of eigenvalues:
\begin{align*}
	N(\lambda,T) = \# \big\{ n\in\N : \eigen_{n}(T) \leq \lambda \big\}.
\end{align*}

\begin{proposition}[\cite{ramirez11}] 
\label{prop:riccati}
Under the prescribed ordering and topology,
\begin{enumerate}[label=(\alph*),leftmargin=7ex]
\item \label{prop:riccati:ode} 
	Fix $ \lambda\in\R $ and an initial condition $ f(0) \in \R\cup\{+\infty\} $,
	equation~\eqref{eq:riccati}--\eqref{eq:riccati:int} admits a unique, continuous solution $ f(x)=f(x,\lambda) $.
	Further, $ f(x,\lambda) $ is decreasing in $ \lambda $ for each $ x $.
\item \label{prop:riccati:order}
	Equation~\eqref{eq:riccati}--\eqref{eq:riccati:int} preserves ordering.
	That is, given any continuous solutions $ f_1(x) $ and $ f_2(x) $ of~\eqref{eq:riccati} with $ f_1(0) \geq f_2(0) $,
	we have $ f_1(x) \geq f_2(x) $ for all $ x \geq 0 $.
\item \label{prop:riccati:num}
	Almost surely for all $ \lambda $,
	$
		\Nsao{\lambda} = \#\{ \text{explosions of } f(\Cdot,\lambda) \text{ in } (0,\infty) \}.
	$
\end{enumerate}
\end{proposition}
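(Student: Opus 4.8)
\emph{Proof plan.} For part~\ref{prop:riccati:ode} I would first fix a continuous sample path of $B$ and substitute $h(x):=f(x)-\tfrac{2}{\sqrt{\beta}}B(x)$, which turns~\eqref{eq:riccati} into the pathwise ODE $h'(x)=x-\lambda-\big(h(x)+\tfrac{2}{\sqrt{\beta}}B(x)\big)^2$ with right-hand side continuous in $x$ and locally Lipschitz in $h$. Picard--Lindel\"of applied to the integrated form then gives a unique maximal solution until $f$ first reaches $\pm\infty$; since the drift is bounded above by $x-\lambda$, $f$ can only escape to $-\infty$, and comparison with $y'=-\tfrac12 y^2$ shows this happens in finite ``time'' $x$. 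From the boundary value $f(0)=+\infty$, comparison with $y'=-2y^2$ forces $f$ to become finite instantaneously and determines it uniquely. Restarting at $+\infty$ after each explosion, and observing that on any compact $[0,M]$ the coefficients $x-\lambda$ and $B$ are bounded so that comparison with $y'=C(M)-y^2$ separates consecutive explosions by a fixed positive gap, I get finitely many explosions on $[0,M]$ and hence, by concatenation, a unique global $\R_{-\N}$-valued solution.

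The monotonicity in $\lambda$ in part~\ref{prop:riccati:ode} and the order preservation in part~\ref{prop:riccati:order} I would treat together by a comparison argument. For two solutions with $f_1(0)\ge f_2(0)$ (allowing, for the $\lambda$-monotonicity, different parameters $\lambda_1\le\lambda_2$, using that $x-\lambda-f^2$ is nonincreasing in $\lambda$), as long as both lie in the same component of $\R_{-\N}$ the difference $d=f_1-f_2$ solves the linear equation $d'=-(f_1+f_2)d$ plus a nonnegative term, whose coefficient is continuous there, so $d$ keeps its sign and $f_1\ge f_2$ persists. When $f_2$ first reaches $-\infty$ it cannot do so strictly later than $f_1$, since $f_2\le f_1$; upon restarting at $+\infty$ it drops to the next, strictly lower component of $\R_{-\N}$, so the ordering is maintained across the explosion. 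Iterating over the finitely many explosions on each compact interval gives the claim.

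For part~\ref{prop:riccati:num} I would use the Riccati dictionary $f=g'/g$: with $\lambda$ fixed, this turns~\eqref{eq:riccati} into the eigenfunction equation~\eqref{eq:SAO} for $g$ with $g(0)=0$, $g\not\equiv 0$ (in the integration-by-parts sense), and explosions of $f$ correspond exactly to zeros of $g$ in $(0,\infty)$---at a simple zero of $g$ the ratio $g'/g$ runs to $-\infty$ and re-emerges at $+\infty$, matching the restart rule. So it suffices to show $\Nsao{\lambda}$ equals the number of zeros of $g(\cdot,\lambda)$ in $(0,\infty)$. I would prove this by truncation: on $[0,M]$ with Dirichlet conditions at both ends, classical Sturm--Liouville oscillation theory---made legitimate for the distributional potential $B'$ by phrasing the eigenproblem through the quadratic form~\eqref{eq:schro:form}---identifies the number of eigenvalues $\le\lambda$ of the truncated operator with the number of zeros of $g(\cdot,\lambda)$ in $(0,M)$. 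Letting $M\to\infty$, the truncated eigenvalues decrease to $\eigen_n(\sao)$ (monotonicity in $M$, the minimax principle~\eqref{eq:minimax}, and the coercivity bound~\eqref{eq:coar}, as in~\cite{ramirez11}), so the left-hand counts converge to $\Nsao{\lambda}$ for every $\lambda$ outside $\mathrm{spec}(\sao)$; and since almost surely $f(\cdot,\lambda)$ has only finitely many explosions---for large $x$ the drift pulls $f$ toward the stable branch $+\sqrt{x-\lambda}$ and the noise a.s.\ eventually fails to dislodge it---the zero counts on $(0,M)$ stabilize to those on $(0,\infty)$ once $M$ is large, which closes the identity for each fixed $\lambda\notin\mathrm{spec}(\sao)$.

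To pass from ``for each fixed $\lambda$'' to ``almost surely for all $\lambda$'' I would note that both sides are nondecreasing, integer-valued in $\lambda$ and jump only at eigenvalues; checking the identity on a fixed countable dense set of $\lambda$ and invoking monotonicity and one-sided continuity pins it down everywhere, the jump at each (a.s.\ simple) eigenvalue being exactly one. The hard part is the roughness of $B'$: making Sturm oscillation theory and the Riccati dictionary rigorous for the integrated equation with distributional potential, and extracting enough uniformity---in $M$ and in $\lambda$---to justify both the $M\to\infty$ passage and the simultaneous-in-$\lambda$ finiteness of explosions. These are exactly the points carried out in~\cite{ramirez11}, whose arguments I would follow.
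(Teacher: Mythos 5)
The paper offers no proof of this proposition: parts (a) and (c) are quoted directly from \cite[Fact~3.1, Proposition~3.5]{ramirez11}, and part (b) is simply noted to follow from the monotonicity in (a). Your plan---subtracting the Brownian term to get a pathwise ODE with entrance from $+\infty$, comparison arguments for the $\lambda$-monotonicity and order preservation, and an oscillation/truncation argument for the counting identity, with the technical core (rigorous oscillation theory for the distributional potential and the simultaneous-in-$\lambda$ statement) explicitly deferred to \cite{ramirez11}---is a faithful reconstruction of exactly the cited arguments, so it is consistent with, and more detailed than, the paper's treatment.
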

\noindent
Parts~\ref{prop:riccati:ode} and \ref{prop:riccati:num} are stated in \cite[Fact~3.1, Proposition~3.5]{ramirez11},
and Part~\ref{prop:riccati:order} follows immediately from Part~\ref{prop:riccati:ode}.
Let us emphasize that, our discussions regarding Riccati transform is \emph{pathwise},
and in particular hold if $ B $ is replaced by any $ w \in C([0,\infty)) $ 
with sublinear growth: $ \lim_{x\to\infty}|g(x)| x^{-a}=0 $, for some $ a<1 $.

As for the Hill operator, similarly consider the Riccati transform:
\begin{align}
	\label{eq:Riccati}
	f'(x) = - \lambda - f^2(x) + \tfrac{2}{\sqrt{\beta}} B'(x),
	\
	x\in(a,b).
\end{align}
Just like in the preceding, we interpret~\eqref{eq:Riccati} in the integrated sense
\begin{align}
\tag{\ref*{eq:Riccati}'}
\label{eq:Riccati:int}
\begin{split}
	f(x)\big|_{x_1}^{x_2}
	= 
	\int_{x_1}^{x_2} \big( -\lambda - f^2(x) \big) \d x 
	&+ 
	\tfrac{2}{\sqrt{\beta}} B(x) \big|_{x_1}^{x_2},
\\
	&
	\notag
	[x_1,x_2] \subset [a,b] \text{ such that no explosions occur in } [x_1,x_2],
\end{split}
\end{align}
and whenever an explosion occurs $ f $ is immediately restarted at $ +\infty $.
It is standard to show (see \cite{fukushima77}) that the following analog of Proposition~\ref{prop:riccati} holds 
\begin{proposition}
\label{prop:Riccati}
Under the prescribed ordering and topology,
\begin{enumerate}[label=(\alph*),leftmargin=7ex]
\item \label{prop:Riccati:ode} 
	Fix $ \lambda\in\R $ and an initial condition $ f(0) \in \R\cup\{+\infty\} $,
	equation~\eqref{eq:Riccati}--\eqref{eq:Riccati:int} admits a unique, continuous solution $ f(x)=f(x,\lambda) $.
	Further, $ f(x,\lambda) $ is decreasing in $ \lambda $ for each $ x $.
\item \label{prop:Riccati:order}
	Equation~\eqref{eq:Riccati}--\eqref{eq:Riccati:int} preserves ordering.
	That is, given any continuous solutions $ f_1(x) $ and $ f_2(x) $ of~\eqref{eq:riccati} with $ f_1(0) \geq f_2(0) $,
	we have $ f_1(x) \geq f_2(x) $ for all $ x \geq 0 $.
\item \label{prop:Riccati:num}
	Almost surely for all $ \lambda $,
	$
		\Nhill{\lambda}{a,b} = \#\{ \text{explosions of } f(\Cdot,\lambda) \text{ in } (a,b] \}.
	$
\end{enumerate}
\end{proposition}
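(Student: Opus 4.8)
The statement is the exact analogue, for the Hill operator $\hill_{[a,b]}$, of Proposition~\ref{prop:riccati}: one simply drops the linear potential $x$ and replaces the half-line $[0,\infty)$ by the compact interval $[a,b]$. On a compact interval none of the growth-at-infinity estimates used in \cite{ramirez11} are needed, so the plan is to carry over that proof (equivalently, to cite \cite{fukushima77}) with the argument only simplifying; as in the remark following Proposition~\ref{prop:riccati}, everything is pathwise, with $B$ replaced by an arbitrary $w\in C[a,b]$. (In the statement, the $f(0)$ of Part~\ref{prop:Riccati:ode} and the $f_1(0),f_2(0)$, $x\ge 0$ and \eqref{eq:riccati} of Part~\ref{prop:Riccati:order} should read $f(a)$, $f_1(a),f_2(a)$, $x\in[a,b]$ and \eqref{eq:Riccati}.)

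For Parts~\ref{prop:Riccati:ode} and \ref{prop:Riccati:order} I would argue as follows. For a finite initial value $f(x_0)=f_0$, the integrated equation \eqref{eq:Riccati:int} reads $f(x)=f_0-\lambda(x-x_0)-\int_{x_0}^x f^2(s)\,\d s+\tfrac{2}{\sqrt\beta}\big(B(x)-B(x_0)\big)$; the right-hand side is a contraction on the closed ball of radius $|f_0|+1$ in $C([x_0,x_0+\delta])$ for $\delta$ small depending only on $|f_0|,|\lambda|$ and the modulus of continuity of $B$, so there is a unique continuous solution on a maximal interval $[x_0,\tau)$, and since $-\lambda-f^2$ is bounded above wherever $f$ is bounded below, maximality with $\tau\le b$ forces $f(x)\to-\infty$ as $x\uparrow\tau$. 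For $f_0=+\infty$ (meant as $\lim_{x\downarrow x_0}$), and to see that explosions cannot accumulate, I pass through $f=g'/g$: the pair $(g,g')$ solves the integrated second-order equation $g(x)=\int_{x_0}^x g'$, $g'(x)=1-\lambda\int_{x_0}^x g+\tfrac{2}{\sqrt\beta}\big(B(x)g(x)-\int_{x_0}^x B\,g'\big)$ with $g(x_0)=0$, $g'(x_0)=1$, which is an affine fixed point on $C([x_0,x_0+\delta])^2$, contractive for $\delta$ small; this gives the solution up to its first explosion (a zero of $g$) and, since $g\equiv0$ is the only solution vanishing to first order at a point, every zero of $g$ is simple hence isolated, so only finitely many occur on the compact $[a,b]$. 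Restarting at $+\infty$ after each explosion produces the global solution on $[a,b]$. Monotonicity in $\lambda$ and order preservation are the Riccati comparison principle: while $f_1:=f(\cdot,\lambda_1)$ and $f_2:=f(\cdot,\lambda_2)$, with $\lambda_1\le\lambda_2$, lie in a common component of $\R_{-\N}$, their difference $D:=f_1-f_2$ solves $D'=(\lambda_2-\lambda_1)-(f_1+f_2)D$, so $D(x)=\int_{x_0}^x e^{-\int_s^x(f_1+f_2)}(\lambda_2-\lambda_1)\,\d s\ge0$ whenever $D(x_0)\ge0$ — for $\lambda_1=\lambda_2$ this preserves the sign of $D$ — and explosions only help: when $f_1$ explodes it drops to a strictly lower component while $f_2\le f_1$ already sits in an equal-or-lower one, so the inequality persists across explosions.

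For Part~\ref{prop:Riccati:num} the plan is the Sturm oscillation theorem for $\hill_{[a,b]}$. With $f(a)=+\infty$ the Riccati solution is $f=g'/g$ for $g=g(\cdot,\lambda)$ solving the integrated Hill eigenequation $g''=(\tfrac{2}{\sqrt\beta}B'-\lambda)g$ with $g(a)=0$, $g'(a)=1$; a zero of $g$ in $(a,b)$ is a simple sign change across which $f$ explodes to $-\infty$ and restarts at $+\infty$, and a zero at $x=b$ produces a final explosion there, so $\#\{\text{explosions of }f(\cdot,\lambda)\text{ in }(a,b]\}=\#\{\text{zeros of }g(\cdot,\lambda)\text{ in }(a,b]\}$. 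By the classical Sturm comparison argument this zero count is non-decreasing in $\lambda$, vanishes for $\lambda<\eigen_1(\hill_{[a,b]})$, and jumps by exactly one as $\lambda$ crosses each Dirichlet eigenvalue (precisely a $\lambda$ with $g(b,\lambda)=0$), hence equals $\#\{n:\eigen_n(\hill_{[a,b]})\le\lambda\}=N(\lambda,\hill_{[a,b]})$. To make the Sturm argument rigorous for the distributional potential $\tfrac{2}{\sqrt\beta}B'$ I would either invoke \cite{fukushima77} directly, or approximate $B$ by smooth $B_\e\to B$ in $C[a,b]$: classical Sturm theory applies to $B_\e$, the eigenvalues of the corresponding operators converge by coercivity and continuity of the forms $Q_{B_\e}$ in \eqref{eq:schro:form}, and the number of explosions is continuous in the driving path by the pathwise stability noted after Proposition~\ref{prop:riccati}, so the identity passes to the limit.

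The routine parts are \ref{prop:Riccati:ode} and \ref{prop:Riccati:order}: they reduce to local Picard iteration and a linear-ODE comparison, once the bookkeeping of explosions in $\R_{-\N}$ is set up. The substantive step — and the expected main obstacle — is Part~\ref{prop:Riccati:num}: tying the first-order Riccati dynamics back to the spectrum of the self-adjoint operator $\hill_{[a,b]}$ (the oscillation theorem), and in particular justifying the oscillation/Sturm-comparison machinery when the potential is only a distribution; this is exactly the point at which one leans on \cite{fukushima77} or on the smooth-approximation argument above.
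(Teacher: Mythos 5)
Your proposal is correct and takes essentially the same route the paper relies on: the paper gives no argument for this proposition beyond declaring it standard and citing \cite{fukushima77} (as the compact-interval analogue of Proposition~\ref{prop:riccati}, itself cited from \cite{ramirez11}), and your local Picard iteration with explosion bookkeeping, Riccati comparison, and Sturm oscillation count (handled either by smoothing $B$ or by invoking \cite{fukushima77} directly) is exactly the standard argument behind that citation. No gaps of substance; only the usual care is needed at exceptional $\lambda$ (eigenvalues, zeros at the endpoint) when passing the smoothed oscillation count to the limit, which monotonicity in $\lambda$ resolves.
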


As mentioned previously in Section~\ref{sect:outline}, our proof of Theorem~\ref{thm:main} proceeds by a localization procedure.
To setup notation for it,
fix $ \alpha\in(-\frac13,\frac23) $, and partition $ (0,\infty) $ into intervals of length $ t^\alpha $ up to just beyond the point $ \zeta t^{\frac23} $.
That is, we set $ i_* := \lceil \zeta t^{\frac23-\alpha} \rceil +1 $, $ \pt_i := i t^{\alpha} $, and
\begin{align}
	\label{eq:intv}
	\intv_i := (\pt_{i-1},\pt_{i}],
	\
	i=1,2, \ldots i_*,
	\quad
	\intv_{i*+1} := [\pt_i,\infty).
\end{align}
Accordingly, we count the number of explosions of~\eqref{eq:riccati} on each subinterval
\begin{align*}
	\Nsaoi{\lambda}{i} := \# \{ x\in \intv_i : \lim_{y\to x^-} f(y,\lambda)=-\infty \},
\end{align*}
where $ f(x,\lambda) $ solves~\eqref{eq:riccati} with the initial condition $ f(0,\lambda)=+\infty $.
Then,
\begin{align}
	\label{eq:Nsao:decmp}
	\Nsao{\lambda} = \sum_{i=1}^{i_*+1} \Nsaoi{\lambda}{i}.
\end{align}
Note that we have omitted the dependence on $ t $ in the notation $ \intv_i $, $ \pt_{i} $, etc.
Similar convention will be frequently adopted without explicitly stating.

Indeed, $ \Nsaoi{\lambda}{i} $ depends on the entrance value $ f(\pt_{i-1},\lambda) $ of $ f $ at the start $ \pt_{i-1} $ of the interval $ \intv_i $.
As a result the processes $ \Nsaoi{\Cdot}{i} $, $ i=1,\ldots,i_*+1 $ are mutually dependent.
This being the case, it will often be more convenient to consider
\begin{align*}
	\Nhill{\lambda}{\intv_i},
	\
	i=1,\ldots,i_*,
	\quad
	\Nsaoo{\lambda} := \# \big\{ k\in\N : \eigen_k(\saoo) \leq \lambda \big\},
\end{align*}
where $ \saoo $ is the \ac{SAO} restricted to $ [\pt_{i_*},\infty) $:
\begin{align}
	\label{eq:saoo}
	\saoo := - \frac{\d^2~}{\d x^2} + x + \frac{2}{\sqrt{\beta}} B'(x),
	\quad
	x \geq \pt_{i_*},
	\text{ with Dirichlet BC at } x=\pt_{i_*},
\end{align}
constructed in a similar way as the \ac{SAO}.
Recall from Proposition~\ref{prop:Riccati}\ref{prop:Riccati:num} that
$ \Nhill{\lambda}{\intv_i} $ counts the number of explosions within $ x\in \intv_{i} $ of the solution
$ f_i(x) =f_i(x,\lambda) $ of
\begin{align}
	\label{eq:riccati:i}
	f_i'(x) = - \lambda - f_i^2(x) + \tfrac{2}{\sqrt{\beta}} B'(x),
	\
	x \in \intv_i,
	\quad
	f_i(\pt_{i-1})=+\infty.
\end{align}
Similarly,
$ \Nsaoo{\lambda} $ counts the number of explosions within $ x\in \intv_{i_*+1} $ of the solution
$ f_*(x) =f_*(x,\lambda) $ of
\begin{align}
	\tag{\ref*{eq:riccati}*}
	\label{eq:riccati*}
	f_*'(x) = x- \lambda - f_*^2(x) + \tfrac{2}{\sqrt{\beta}} B'(x),
	\
	x\in\intv_{i_*+1},
	\quad
	f_*(\pt_{i_*})=+\infty.
\end{align}
From the preceding descriptions,
we see that $ \Nhill{\lambda}{\intv_i} $ depends only on the increment $ B(x)-B(\pt_{i-1}) $ of the Brownian motion within $ x\in \intv_i $,
and $ \Nsaoo{\lambda} $ depends only on $ B(x)-B(\pt_{i-1}) $ for $ x\in \intv_{i_*+1} $.
Hence, the processes $ \Nhill{\Cdot}{\intv_i} $, $ i=1,\ldots,i_* $, and $ \Nsaoo{\Cdot} $ are independent.

To relate the processes $ \Nhill{\Cdot}{\intv_i} $ and $ \Nsaoo{\Cdot} $ back to $ \Nsaoi{\Cdot}{i} $,
we establish the following inequalities. 
\begin{lemma}
\label{lem:loc}
Couple the processes $ \Nsaoi{\Cdot}{i} $, $ \Nhill{\Cdot}{\intv_i} $, $ \Nsaoo{\Cdot} $
by having the same spatial white noise $ B'(x) $ for the operators in~\eqref{eq:sao}, \eqref{eq:hill}, and \eqref{eq:saoo}.
Almost surely for all $ \lambda\in\R $ and $ i=1,\ldots,i_* $, we have
\begin{align*}
	\Nhill{\lambda-\pt_{i}}{\intv_i} \leq \Nsaoi{\lambda}{i} \leq \Nhill{\lambda-\pt_{i-1}}{\intv_i }+ 1,
	\quad
	\Nsaoi{\lambda}{i_*+1} \leq \Nsaoo{\lambda} + 1.
\end{align*}
\end{lemma}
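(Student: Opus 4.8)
The plan is to deduce all three inequalities from two pathwise comparison facts for the Riccati equations~\eqref{eq:riccati}, \eqref{eq:riccati:i} and~\eqref{eq:riccati*}, all driven by the common noise $B'$ under the restart-at-$+\infty$ convention. The first is a \emph{drift comparison}: if two such equations are run from the same initial value with the same noise but with drifts $a_1(x)\ge a_2(x)$, then the solution with the larger drift stays weakly above the other throughout, in the ordering on $\R_{-\N}$, and hence explodes at most as often on any subinterval. This is proved by the same argument as the $\lambda$-monotonicity in Proposition~\ref{prop:riccati}\ref{prop:riccati:ode}: the noise cancels in the difference of the two solutions, at a putative first crossing the difference of drifts is nonnegative, and a restart-at-$+\infty$ only moves a solution into a lower component of $\R_{-\N}$, while a solution can never explode strictly before one lying below it, so the order survives. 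The second fact is an \emph{initial-value comparison} for one such equation on an interval $[c,d]$: among all initial values at $c$, launching from $+\infty$ produces the fewest explosions on $[c,d]$, and launching from any other value produces at most one more. The first half is immediate from order preservation (Propositions~\ref{prop:riccati}\ref{prop:riccati:order} and~\ref{prop:Riccati}\ref{prop:Riccati:order}), since $+\infty$ is the top of the initial component; for the extra $1$, once a solution has its first explosion inside $[c,d]$ it is itself restarted from $+\infty$ at that time, and the first half applies on the remaining subinterval. (Throughout, one re-indexes the components of $\R_{-\N}$ so the explosion count restarts at $c$, which is harmless because $\Nsaoi{\Cdot}{i}$, $\Nhill{\Cdot}{\intv_i}$ and $\Nsaoo{\Cdot}$ count only explosions inside the interval in question.)

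Granting these, the argument for $i\le i_*$ runs as follows. On $\intv_i$ the drift $x-\lambda$ of~\eqref{eq:riccati} obeys $\pt_{i-1}-\lambda\le x-\lambda\le\pt_i-\lambda$, and the two bounds are precisely the constant drifts of~\eqref{eq:riccati:i} with parameters $\lambda-\pt_{i-1}$ and $\lambda-\pt_i$. Let $n_0$ be the number of explosions on $\intv_i$ of the solution of~\eqref{eq:riccati} launched from $+\infty$ at $\pt_{i-1}$. Applying the drift comparison to the three equations, each launched from $+\infty$ at $\pt_{i-1}$, gives $\Nhill{\lambda-\pt_i}{\intv_i}\le n_0\le\Nhill{\lambda-\pt_{i-1}}{\intv_i}$; applying the initial-value comparison to~\eqref{eq:riccati} on $\intv_i$, where the globally running solution enters at $f(\pt_{i-1},\lambda)\le +\infty$, gives $n_0\le\Nsaoi{\lambda}{i}\le n_0+1$. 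Chaining these two displays gives the first claimed sandwich. For the last inequality, $\Nsaoo{\lambda}$ is, by the definition of~\eqref{eq:riccati*}, the number of explosions on $\intv_{i_*+1}$ of the \emph{same} equation~\eqref{eq:riccati} (linear potential retained) launched from $+\infty$ at $\pt_{i_*}$; so the initial-value comparison alone --- no drift comparison needed --- yields $\Nsaoi{\lambda}{i_*+1}\le\Nsaoo{\lambda}+1$.

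To upgrade from fixed $\lambda$ to ``almost surely, for all $\lambda$ simultaneously'', I would note that for each fixed $\lambda$, almost surely none of the relevant solutions explodes exactly at a partition point $\pt_i$, which is the event on which the component bookkeeping is unambiguous; since every counting function above is integer valued, nondecreasing and right-continuous in $\lambda$, a union over rational $\lambda$ followed by a monotone limit gives the full statement. The delicate point --- the step I expect to require the most care --- is precisely this component bookkeeping inside the two comparison facts: one must verify that the lexicographic order on $\R_{-\N}$ really is preserved through staggered or simultaneous explosions of the two solutions being compared. The stray ``$+1$'' terms are exactly the single explosion that may be ``wasted'' when a solution enters an interval at a value other than $+\infty$.
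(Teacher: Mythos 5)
Your proposal is correct and uses essentially the same mechanism as the paper: pathwise comparison of the Riccati flows in the $\R_{-\N}$ ordering (monotonicity in the drift and in the initial value), together with restarting at the first explosion inside the interval to account for the ``$+1$''. The only difference is organizational---you factor the argument through the auxiliary count $n_0$ of the SAO--Riccati solution relaunched from $+\infty$ at $\pt_{i-1}$ and chain two comparison lemmas, whereas the paper compares the global solution directly with the Hill--Riccati solutions $g$ and $\til g$ (and with~\eqref{eq:riccati*} for the last inequality)---so this is a valid, equivalent rearrangement rather than a different route.
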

\begin{proof}
Fix $ i $ and $ \lambda $.
Let $ f(x) = f(x,\lambda) $ be the solution of~\eqref{eq:riccati} with $ f(0)=+\infty $. 
Restricting~\eqref{eq:riccati} to the relevant interval $ x\in\intv_i $, we write
\begin{align}
	\label{eq:cmp1}
	f'(x) = x- \lambda - f^2(x) + \tfrac{2}{\sqrt{\beta}} B'(x),
	\
	x \in \intv_i,
	\quad
	f(\pt_{i-1}) \in \R\cup\{+\infty\}, \text{ given}.
\end{align}
Let $ g(x)=f_i(x,\lambda-\pt_{i}) $ be the solution of~\eqref{eq:riccati:i} with $ \lambda\mapsto \lambda-\pt_{i} $, i.e.,
\begin{align}
	\label{eq:cmp2}
	g'(x) = -(\lambda-\pt_{i}) - g^2(x) + \tfrac{2}{\sqrt{\beta}} B'(x),
	\
	x\in I_i,
	\quad
	g(\pt_{i-1}) = +\infty.
\end{align}
By definition, $ \Nsaoi{\lambda}{i} $ is the number of explosions of $ f $ on $ \intv_i= (\pt_{i-1},\pt_{i}] $,
and recall that $ \Nhill{\lambda-\pt_{i}}{\intv_i} $ is equal to the number of explosions of $ g $ in $ \intv_i $.
Since $ x-\lambda \leq -(\lambda-\pt_{i}) $ on $ x\in \intv_i $
and since $ f(\pt_{i-1})\leq g(\pt_{i-1})=+\infty $, by comparison we have $ f(x) \leq g(x) $, $ x\in \intv_i $, under the ordering of $ \R_{-\N} $.
This gives the first inequality $ \Nhill{\lambda-\pt_{i}}{\intv_i} \leq \Nsaoi{\lambda}{i} $.

Turning to the second inequality, we consider $ \til{g}(x) = f_i(x,\lambda-\pt_{i-1}) $, which solves
\begin{align}
	\label{eq:cmp3}
	\til{g}'(x) = - (\lambda-\pt_{i-1}) - \til{g}^2(x) + \tfrac{2}{\sqrt{\beta}} B'(x),
	\
	x\in I_i,
	\quad
	\til{g}(\pt_{i-1}) = +\infty,
\end{align}
and consider the first explosion time of $ f $ on $ \intv_i $.
If $ f $ does not explode within $ \intv_i $, then $ \Nsaoi{\lambda}{i} =0 $,
whence the desired inequality $ \Nsaoi{\lambda}{i} \leq \Nhill{\lambda-\pt_{i-1}}{\intv_i} + 1 $ follows trivially.
Otherwise let $ b\in[\pt_{i-1},\pt_{i}] $ denote the first explosion.
We then have $ x-\lambda \geq -(\lambda-\pt_{i}) $ on $ x\in [b,\pt_{i}] $ and $ +\infty=f(b)\geq \til{g}(b) $.
Comparison applied to $ f $ and $ \til{g} $ over the interval $ x\in[b,\pt_{i}] $ yields $ f(x) \geq \til{g}(x) $, $ x\in [b,\pt_{i}] $.
Taking into account the explosion of $ f $ at $ x=b $,
we obtain $ \Nsaoi{\lambda}{i} \leq \Nhill{\lambda-\pt_{i-1}}{\intv_i} + 1 $.

The last inequality concerning $ \Nsaoi{\lambda}{i_*+1} $ and $ \Nsaoo{\lambda} $
follows by the same comparison argument applied to solutions of \eqref{eq:riccati*} and~\eqref{eq:cmp1} for $ i=i_{*}+1 $.
\end{proof}

\section{Proof of Theorem~\ref{thm:main}}
\label{sect:pf:mainmain:}

Our proof of Theorem~\ref{thm:main} breaks into lower and upper bounds.
That is, we establish matching bounds on the l.h.s.\ of~\eqref{eq:thm:main} to obtain the desired result.
Hereafter, we use $ c=(a,b,\ldots) $ to denote a generic, deterministic, finite positive constant that may change from line to line,
but depend only on the designated variables.
As declared previously, $ \beta,\zeta,L\in(0,\infty) $ are fixed throughout this article, 
so their dependence will not be designated.
\subsection{Lower bound}
\label{sect:pflw}
To simplify notation, set 
\begin{align}
	\label{eq:goal}
	G := \Ex \Big[ \exp\Big( - L \, \sum_{k=1}^\infty \cost_{t}(\eigen_{k}(\sao)-t^\frac23\zeta) \Big) \Big].
\end{align}
Our goal is to establish a desired lower bound on $ t^{-2} \log G $.
The proof is carried out in steps.

\medskip
\noindent\textbf{Step 1: localization.}
Recall the partition~\eqref{eq:intv} introduced previously.
By Proposition~\ref{prop:riccati}\ref{prop:riccati:num}, $ \Nsao{\lambda} $ counts the number of eigenvalues $ \eigen_{k}(\sao) $ of the $ \sao $ at most $ \lambda $.
Using this interpretation, together with the decomposition \eqref{eq:Nsao:decmp}, we rewrite the infinite sum in~\eqref{eq:goal} as
\begin{align}
\label{eq:lemloc:1}
\begin{split}
	-\sum_{k=1}^\infty \cost_{t}(\eigen_{k}(\sao)-t^\frac23\zeta)
	&=
	-\int_{\R} \cost_{t}(\lambda-t^\frac23\zeta) \, \d \Nsao{\lambda}
\\
	&=
	\int_{\R} \Nsao{\lambda} \cost_{t}'(\lambda-t^\frac23\zeta) \, \d\lambda
	=
	\sum_{i=1}^{i_*+1} \int_{\R} \Nsaoi{\lambda+t^\frac23\zeta}{i} \cost_{t}'(\lambda) \, \d\lambda,
\end{split}
\end{align}
where $ \d $ acts on the variable $ \lambda\in\R $.
Recall the Hill operator $ \hill_{\intv_i} $ from~\eqref{eq:hill} and $ \saoo $ from~\eqref{eq:saoo}.
Our goal here is to pass from the operator $ \sao $ to $ \hill_{\intv_i} $ for $ i=1,\ldots,i_* $ and to $ \saoo $ for $ i=i_*+1 $.
To simplify notation set $ \calN_i(\lambda) := \Nhill{\lambda+t^\frac23\zeta-\pt_{i-1}}{\intv_i} $ for $ i=1,\ldots,i_* $,
and $ \calN_{i_*+1} := \Nsaoo{\lambda+t^\frac23\zeta} $.
Consider the event $ \Omega_1 := \{ \eigen_{1}(\sao) > -t^{\frac23} \} $ that the groundstate eigenvalue of $ \sao $ lies above $ -t^{\frac23} $.
It is readily checked from~\eqref{eq:costt} that $ \cost_t'(\lambda) < 0 $.
Using this and the bounds from Lemma~\ref{lem:loc} in \eqref{eq:lemloc:1}, we write
\begin{align}
	\notag
	G
	&\geq
	\Ex\Big[
		\ind_{\Omega_1} 
		\cdot 
		\prod_{i=1}^{i_*+1} 
		\exp\Big( L \,\int_{-t^\frac23(1+\zeta)}^\infty \Nsaoi{\lambda+t^\frac23\zeta}{i} \cost_{t}'(\lambda) \, \d\lambda \Big)
	\Big]
\\
	\label{eq:lwbd:0.1}
	&\geq
	\Ex\Big[
		\ind_{\Omega_1} 
		\cdot 
		\prod_{i=1}^{i_*+1} \exp\Big( L \,\int_{-t^\frac23(1+\zeta)}^\infty (1+\calN_i(\lambda)) \cost_{t}'(\lambda) \, \d\lambda \Big)
	\Big].
\end{align}
Within the last expression,
separate the $ 1 $'s from the $ \calN_i $'s and evaluate the contribution of the former
\begin{align*}
	L \, \int_{-t^\frac23(1+\zeta)}^\infty 1\cdot \cost_t'(\lambda)\d \lambda 
	= 
	- L \,\cost_t(t^\frac23(1+\zeta)) 
	= 
	-L \, t^\frac13 \log(1+e^{t^\frac23(1+\zeta)}) \geq -c t.
\end{align*}
With $ i_*+1 \leq c t^{\frac23-\alpha} $, we bound
\begin{align*}
	\prod_{i=1}^{i_*+1}\exp\Big( L \, \int_{-t^\frac23(1+\zeta)}^\infty 1\cdot\cost_{t}'(\lambda) \, \d\lambda \Big)
	\geq 
	e^{-ct^{\frac53-\alpha}}.
\end{align*}
Use this bound in~\eqref{eq:lwbd:0.1},
and then release the remain integral of $ \calN_i\cdot \cost_t' $ (which is negative) to $ \lambda\in\R $ to get
\begin{align}
	\label{eq:lwbd:1}
	G
	\geq
	e^{-ct^{\frac53-\alpha}}
	\Ex\Big[
		\ind_{\Omega_1} 
		\cdot 
		\prod_{i=1}^{i_*+1} \exp\Big( L \, \int_{\R} \calN_i(\lambda) \cost_{t}'(\lambda) \, \d\lambda \Big)
	\Big].
\end{align}

\medskip
\noindent\textbf{Step 2: change of measure.}
Write $ y_\pm := (\pm y)\vee 0 $ for the positive/negative part, and consider
\begin{align}
	\label{eq:v}
	v_*(x) := 4 L^2 \pi^{-2}\beta^{-\frac32} \, \Big( -1 + \sqrt{1+\big(\tfrac{\pi\beta}{2L}\big)^2(\zeta-x)_+} \Big),
\end{align}
and set
\begin{align}
	\label{eq:V}
	V_i := t^\frac23 v_*(t^{-\frac23}\pt_{i-1}),
	\quad
	V(x) := \sum_{i=1}^{i_*} V_i \ind_{\intv_i}(x).
\end{align}
Girsanov's theorem asserts that 
\begin{align}
	\label{eq:girsanov}
	\Ex[\,\Cdot\,] = \til{\Ex}\big[ e^{-\int_0^\infty V(x) \d B(x) +\frac12\int_0^\infty V^2(x) \d x}\,(\,\Cdot\,) \big],
\end{align}
and, under $ \til{\Ex} $, $ B $ is distributed as a drifted Brownian motion,
i.e., $  B \stackrel{\text{law}}{=} \til{B} + \int_0^\Cdot V(y)\d y $, where $ \til{B} $ is a standard Brownian motion.
Let $ \til{\sao}_* = -\frac{\d^2~}{\d x^2} + x + \frac{2}{\sqrt{\beta}} \til{B}'(x) $, $ x \geq \pt_{i_*} $,
and $ \til{\hill}_{\intv_i} = -\frac{\d^2~}{\d x^2} + \frac{2}{\sqrt{\beta}} \til{B}'(x) $, $ x\in\intv_i $ denote the analogous operators.
One the r.h.s.\ of~\eqref{eq:lwbd:1}, apply~\eqref{eq:girsanov}, and express each $ B $ in terms of $ \til{B} $ and $ V $ for the result.
We obtain
\begin{align}
	\label{eq:lwbd:2}
	G
	\geq
	e^{-ct^{\frac53-\alpha}-\frac12 \int_0^\infty V^2(x) \d x }
	\cdot
	\til{\Ex}\Big[
		\ind_{\til\Omega_2} 
		e^{-\int_0^\infty V(x) \d \til{B}(x)}
		\cdot 
		\prod_{i=1}^{i_*+1} \exp\Big( L \, \int_{\R} \til{\calN}_i(\lambda) \cost_{t}'(\lambda) \, \d\lambda \Big)
	\Big],
\end{align}
where
\begin{align*}
	\til{\calN}_i(\lambda) := \Nhillt{\lambda+t^\frac23\zeta-\tfrac{2}{\sqrt{\beta}}V_i}{\intv_i},
	\ 
	i=1,\ldots,i_*,
	\quad
	\til{\calN}_{i_*+1}(\lambda) := \Nsaoot{\lambda+t^\frac23\zeta},
\end{align*}
and $ \til\Omega_2 := \{ \eigen_1(\til\sao+\frac{2}{\sqrt{\beta}}V) > -t^{-\frac23} \} $.
In the last expression we interpreted $ V $ as a multiplicative operator $ L^2[0,\infty)\to L^2[0,\infty) $, which is a bounded, Hermitian operator.
From this point onward, we will always operate under the transformed measure $ \til{\Ex} $.
To alleviate heavy notation, we dropped all the tildes and rewrite~\eqref{eq:lwbd:2} as
\begin{align}
	\tag{\ref*{eq:lwbd:2}'}
	\label{eq:lwbd:3}
	G
	\geq
	e^{-ct^{\frac53-\alpha}-\frac12 \int_0^\infty V^2(x) \d x }
	\cdot
	\Ex \Big[
		\ind_{\Omega_2} 
		e^{-\int_0^\infty V(x) \d B(x)}
		\cdot 
		\prod_{i=1}^{i_*+1} \exp\Big( L \,\int_{\R} \calM_i(\lambda) \cost_{t}'(\lambda) \, \d\lambda \Big)
	\Big].
\end{align}
where $ \Omega_2 := \{ \eigen_1(\sao+\frac{2}{\sqrt{\beta}}V) > -t^{-\frac23} \} $, and
\begin{align*}
	\calM_i(\lambda) := \Nhill{\lambda+t^\frac23\zeta-\tfrac{2}{\sqrt{\beta}}V_i-\pt_{i-1}}{\intv_i},
	\
	i=1,\ldots,i_*,
	\quad
	\calM_{i_*+1}(\lambda) := \Nsaoo{\lambda+t^\frac23\zeta}.
\end{align*}

\medskip
\noindent\textbf{Step 3: bounding terms on the r.h.s.\ of~\eqref{eq:lwbd:3}.}
We begin with the term $ \calM_i(\lambda) $, $ i=1,\ldots,i_* $.
To bound $ \calM_i(\lambda) $, we will apply spectral comparison of the Hill operator $ \hill_{\intv_i} $ and the Laplace operator
\begin{align*}
	-\Delta_{\intv_i} := -\frac{\d^2~}{\d x^2},
	\quad
	\text{with Dirichlet BC}.
\end{align*}
Set $ U_i := \max_{x\in\intv_i} |B(x)-B(\pt_{i-1})| $, fix $ i=1,\ldots,i_* $,
and let $ \kappa \geq 1 $ be an auxiliary parameter. 
Apply Lemma~\ref{lem:spec:cmp} with $ (\pot_1(x),\pot_2(x))=(0,\frac{2}{\sqrt{\beta}}(B(x)-B(\pt_{i-1}))) $ to get
\begin{align}
	\eigen_{n}(\hill_{\intv_i}) \geq \frac{\kappa^3}{(\kappa+1)^3} \eigen_{n}(-\Delta_{\intv_i}) -  c\,(\kappa+1)^2 U_i^2.
\end{align}
From this we deduce, for $ r=t^\frac23\zeta-\tfrac{2}{\sqrt{\beta}}V_i-\pt_{i-1} $,
\begin{align*}
	\calM_i(\lambda) = \# \big\{ n\in\N : \eigen_{n}(\hill_{\intv_i}) \leq \lambda+ r \big\}
	&\leq
	\# \big\{ n\in\N : (\tfrac{\kappa}{\kappa+1})^3 \eigen_{n}(-\Delta_{\intv_i}) \leq \lambda+r + c\,(\kappa+1)^2 U_i^2 \big\}
\\
	&= N\big( (\tfrac{\kappa+1}{\kappa})^3(\lambda+r) + (\kappa+1)^2 c_\star U_i^2 , -\Delta_{\intv_i} \big),
\end{align*}
for some fixed constant $ c_\star<\infty $.
Fix $ \delta \in (0,\frac{2}{3}-\alpha) $ and consider the event
\begin{subequations}
\begin{align}
	\label{eq:Omega3:a}
	\Omega_3(\kappa) := \big\{ (\kappa+1)^2 c_\star U_i^2 &\leq t^{\delta+\alpha_+},
\\
	\label{eq:Omega3:b}
	U_i &\leq t^{\frac{1}{2}(\delta+\alpha_+)}, \ i=1,\ldots,i_* \, \big\}.
\end{align}
\end{subequations}
Given that the interval $ \intv_i $ has length $ |\intv_i| = t^{\alpha} $, it is straightforward to verify
$ \Pr[\Omega_3(\kappa)] \to 1 $, for fixed $ \kappa \in (0,\infty) $ as $ t\to\infty $.
Under the condition~\eqref{eq:Omega3:a}, we have
\begin{align}
	\label{eq:calMibd}
	\ind_{\Omega_3(\kappa)}
	\calM_i(\lambda) 
	\leq 
	M_i(\lambda,\kappa),
	\quad
	i=1,\ldots,i_*,
\end{align}
where
\begin{align}
	\label{eq:calMi}
	M_i(\lambda,\kappa)
	:= 
	N\big( (\tfrac{\kappa+1}{\kappa})^3(\lambda+r_i)+t^{\delta+\alpha_+} , -\Delta_{\intv_i} \big),
	\quad
	r_i:=t^\frac23\zeta-\tfrac{2}{\sqrt{\beta}}V_i-\pt_{i-1}.
\end{align}

We now turn to bounding $ \calM_{i_*+1}(\lambda)=\Nsaoo{\lambda+t^\frac23\zeta} $.
Shifting the operator $ \saoo $ (defined in~\eqref{eq:saoo}) by $ x\mapsto x- \pt_{i_*} $, we see that
$
	\{ \eigen_{n}(\saoo) \}_{n=1}^\infty
	\stackrel{\text{law}}{=}
	\{ \eigen_{n}(\sao) + \pt_{i_*} \}_{n=1}^\infty,	
$
or equivalently
\begin{align}
	\label{eq:Nsao:law}
	\calM_{i_*+1}(\Cdot)
	\stackrel{\text{law}}{=}
	\Nsao{\Cdot+t^\frac23\zeta-\pt_{i_*}}.
\end{align}
Our next step is to compare the spectrum of $ \hill $ to that of the Airy operator $ \ao := - \frac{\d^2~}{\d x^2} + x $,
in a way similarly to Lemma~\ref{lem:spec:cmp}.
Recall that $ \sao $ is the associated operator of the form~\eqref{eq:saoform},
with $ \hilbb = L_* $ given in~\eqref{eq:L*} and $ \hilb = L^2[0,\infty) $.
For the Airy operator,
we take the same Hilbert spaces $ \hilbb = L_{*} \subset \hilb= L^2[0,\infty) $,
with the form $ Q_{\ao}(f,g) := \int_{0}^\infty (f'(x) \bar{g}'(x) + x f(x)\bar{g}(x)) \d x $. 
By \cite[Lemma~4.5.44~(b)]{anderson10}, there exists a $ [0,\infty) $-valued random variables $ U $ such that,
\begin{align*}
	Q_{\sao}(f,f) \geq  \tfrac12 Q_{\ao}(f,f) -  U \norm{f}^2_{L^2[0,\infty)},
	\quad
	\forall f \in L_*.
\end{align*}
The minimax principle~\eqref{eq:minimax} hence gives
$ \eigen_{n}(\sao) \geq \frac12 \eigen_{n}(\ao) -  U. $
From this we conclude
$
	\Nsao{\lambda+t^\frac23\zeta-\pt_{i_*}}
	\leq
	N( 2(\lambda+t^{\frac23}\zeta-\pt_{i_*})+2U, \ao ).
$
Given that $ \pt_{i_*} = i_* t^{\alpha} = (\lceil \zeta t^{\frac23-\alpha} \rceil +1) t^{\alpha} \geq t^\frac23\zeta + t^\frac23 $,
we further obtain
\begin{align}
	\label{eq:Nsao:law:}
	\Nsao{\lambda+t^\frac23\zeta-\pt_{i_*}}
	\leq
	N( 2(\lambda -t^\frac23 +  U), \ao ).
\end{align}
The spectrum of the Airy operator is exactly the zero set of the Airy function on $ \R $ up to a spatial reversal,
and the real zeros of Airy function admit precise asymptotic expansions (see, e.g., \cite[Section~11.5]{olver97}).
In particular, $ N( \lambda, \ao ) \leq c\, (\lambda_+)^{3/2} $, for all $ \lambda \in\R $.
Combining this with~\eqref{eq:Nsao:law} and	\eqref{eq:Nsao:law:},
we have that
\begin{align}
	\label{eq:Nsao:bd:}
	\exp\Big( L \,\int_\R \calM_{i_*+1}(\lambda) \cost'_t(\lambda) \d \lambda \Big)
	\geq
	\exp\Big( c \int_{\R} (\lambda-t^\frac23+U_*)^{\frac32}_+ \cost'_t(\lambda) \d \lambda \Big),
\end{align}
for some $ U_* \stackrel{\text{law}}{=} U $.
Consider the event $ \Omega_4 := \{ U_* \leq t^{\frac23} \} $.
Indeed, since $ U_* \stackrel{\text{law}}{=} U $ is $ [0,\infty) $-valued,
we have $ \Pr[\Omega_4] \to 1, $ as $ t\to\infty $.
On the r.h.s.\ of~\eqref{eq:Nsao:bd:}, using $ \cost'_t(\lambda) \geq - t^\frac13 e^{-t^\frac13\lambda} $ (verified from~\eqref{eq:costt})
and perform the change of variable $ \lambda-t^\frac23+U_* \mapsto \lambda $. 
Under the condition $ \Omega_4 := \{ U_* \leq t^{\frac23} \} $, we have
\begin{align}
	\notag
	\ind_{\Omega_4}
	\cdot
	\exp\Big( L \,\int_\R \calM_{i_*+1}(\lambda) \cost'_t(\lambda) \d \lambda \Big)
	&\geq
	\ind_{\Omega_4}
	\cdot
	\exp\Big( -c \int_{0}^\infty \lambda^\frac32 t^\frac13 e^{-t^\frac13(\lambda+t^\frac23-U_*)}) \d \lambda \Big)	
\\	
	\label{eq:Nsao:bd}
	&\geq
	\exp\Big( - c \int_{0}^\infty \lambda^\frac32 t^\frac13 e^{-t^\frac13\lambda} \d \lambda \Big)
	\geq 
	\frac12,
\end{align}
for all $ t $ large enough.

Next we turn to the exponential martingale in~\eqref{eq:lwbd:3}.
Recall that $ U_i := \max_{x\in\intv_i} |B(x)-B(\pt_{i-1})| $,
and that $ V(x) $ takes constant value $ V_i $ on $ \intv_i $,
and note from~\eqref{eq:V} that $ |V_i| \leq c t^{\frac23} $.
From thees properties we have
\begin{align*}
	\Big|\int_0^\infty V(x) \d B(x) \Big|
	\leq
	\sum_{i=1}^{i_*} |V_i| |B(\pt_{i})-B(\pt_{i-1})|
	\leq
	c t^\frac23 \sum_{i=1}^{i_*} U_i.
\end{align*} 
Using the condition~\eqref{eq:Omega3:b} together with $ i_* \leq c t^{\frac23-\alpha} $ gives
\begin{align}
	\label{eq:expmgbd}
	\ind_{\Omega_3(\kappa)} e^{ -\int_0^\infty V(x) \d B(x) }
	\geq
	\exp( - c t^{\frac43+\frac12(\delta+\alpha_+)} ).
\end{align} 

On the r.h.s.\ of~\eqref{eq:lwbd:3} withing the expectation, multiply by $ \ind_{\Omega_3(\kappa)\cap \Omega_4} $ to get
\begin{align*}
	G
	\geq
	e^{-ct^{\frac53-\alpha}-\frac12 \int_0^\infty V^2(x) \d x }
	\cdot
	\Ex \Big[
		\ind_{\Omega_2\cap\Omega_{3}(\kappa)\cap\Omega_4} 
		e^{-\int_0^\infty V(x) \d B(x)}
		\cdot 
		\prod_{i=1}^{i_*+1} \exp\Big( L \, \int_{\R} \calM_i(\lambda) \cost_{t}'(\lambda) \, \d\lambda \Big)
	\Big].
\end{align*}
On the r.h.s., insert the bounds \eqref{eq:calMibd}, \eqref{eq:Nsao:bd}--\eqref{eq:expmgbd}
(noting that $ M_i(\lambda,\kappa) $ is deterministic),
take logarithm, and divide the result by $ t^{2} $.
We obtain
\begin{align}
\label{eq:lwbd:5}
\begin{split}
	t^{-2} \log G
	\geq
	&-ct^{-\frac13-\alpha} -\frac12 \int_0^\infty t^{-2} V^2(x) \, \d x 
	- c t^{-\frac23+\frac12(\delta+\alpha_+)}
	+ L \, \sum_{i=1}^{i_*} \int_{\R} t^{-2} M_i(\lambda,\kappa) \cost_t'(\lambda) \, \d\lambda
\\
	&- t^{-2} \log 2
	+
	t^{-2}\log \Pr[\Omega_2\cap\Omega_3(\kappa)\cap\Omega_4].
\end{split}
\end{align}
As has been argued previously, $ \Pr[\Omega_3(\kappa)], \Pr[\Omega_4] \to 1 $, for fixed $ \kappa\in(0,\infty) $ as $ t\to\infty $.
As for $ \Omega_2 $, with $ V(x) \geq 0 $, comparison argument similarly to the preceding gives
$ \eigen_{1}(\sao+V) \geq \eigen_{1}(\sao) $.
This being the case, we necessarily have
$
	\Pr[\Omega_2] = \Pr[\eigen_{1}(\sao+V) > t^{-\frac23}]
	\geq
	\Pr[\eigen_{1}(\sao) > t^{-\frac23}] 
	\to
	1,
$
as $ t\to\infty $.
Consequently, $ \Pr[\Omega_2\cap\Omega_3(\kappa)\cap\Omega_4] \to 1 $.
Now, for fixed $ \kappa\in(0,\infty) $, sending $ t\to\infty $ in~\eqref{eq:lwbd:5},
together with $ \alpha>-\frac13 $ and $ \delta+\alpha_+ < \frac23 $, we arrive at
\begin{align}
\label{eq:lwbd::}
	\liminf_{t\to\infty} (t^{-2} \log G)
	\geq
	\liminf_{t\to\infty}
	\Big( -\frac12 \int_0^\infty t^{-2} V^2(x) \, \d x \Big)
	+
	\liminf_{t\to\infty}
	\Big( L \, \sum_{i=1}^{i_*} \int_{\R} t^{-2} M_i(\lambda,\kappa) \cost_t'(\lambda) \, \d\lambda \Big).
\end{align}

\medskip
\noindent\textbf{Step 4: evaluating the limit.}
The last step is to evaluate the limits on the r.h.s.\ of~\eqref{eq:lwbd::}.
For the first term, recall the definition of $ v_*(x) $ and $ V(x) $ from~\eqref{eq:v}--\eqref{eq:V}.
Substituting in $ |\intv_i| = t^\alpha $, we have
\begin{align*}
	\frac12 \int_0^\infty t^{-2} V^2(x) \, \d x 
	=
	\frac{t^{-2}}2 \sum_{i=1}^{i_*} t^\frac43 v^2_*(\pt_{i-1}t^{-\frac23}) \, |\intv_i|
	=
	\frac{1}2 \sum_{i=1}^{i_*} v^2_*((i-1)t^{\alpha-\frac23}) \, t^{\alpha-\frac23}. 
\end{align*}
The last expression is indeed a Riemann sum of the integral $ \frac12\int_0^\infty v^2_*(x) \d x $.
Since $ v_* $ is continuous and compactly supported, we have
\begin{align}
	\label{eq:v2int}
	\lim_{t\to\infty}
	\frac12 \int_0^\infty t^{-2} V^2(x) \, \d x 
	=
	\int_0^\infty \frac12 v^2_*(x) \d x.
\end{align}
Next, recall the definition of $ M_i(\lambda,\kappa) $ and $ r_i $ from~\eqref{eq:calMi}.
Indeed, the spectrum of the Laplace operator $ -\Delta_{\intv_i} $ is simply
$ \{ \eigen_{n}(-\Delta_{\intv_i}) \}_{n=1}^\infty = \{ n^2\pi^2 |\intv_i|^{-2} \}_{n=1}^\infty $.
Substituting in $ |\intv_i|=t^\alpha $, we obtain
\begin{align}
	\label{eq:Mi}
	M_i(\lambda,\kappa) 
	\leq 
	\frac{t^\alpha}{\pi}
	\sqrt{ \Big( \big(\tfrac{1+\kappa}{\kappa} \big)^3 \big( \lambda + r_i \big) + t^{\delta+\alpha_+} \Big)_+ }.
\end{align}
Apply $ \sum_{i=1}^{i_*} \int_{\R} t^{-2}(\, \Cdot \, )\cost_t'(\lambda) \, \d\lambda $ to both sides of~\eqref{eq:Mi}.
With $ \cost_t'<0 $, the resulting equality flip sides, giving
\begin{align*}
	L \, \sum_{i=1}^{i_*} \int_{\R} t^{-2} M_i(\lambda,\kappa) \cost_t'(\lambda) \, \d\lambda
	\geq
	\int_\R
	\frac{t^{\alpha-2}L}{\pi}
	\sum_{i=1}^{i_*}
	\sqrt{ \Big( \big(\tfrac{1+\kappa}{\kappa} \big)^3 \big( \lambda + r_i \big) + t^{\delta+\alpha_+} \Big)_+ }
	\,
	\cost'_t(\lambda) \, \d\lambda.
\end{align*}
Substitute in
$ r_i = t^\frac23\zeta-\tfrac{2}{\sqrt{\beta}}V_i-\pt_{i-1} $,
$ V_i = t^{\frac23} v_*((i-1)t^{\alpha-\frac23}) $, 
$ \pt_{i-1}=(i-1)t^\alpha $,
$ \cost'_t(\lambda) = -t^\frac13 e^{-t^\frac23\lambda}/(1+e^{-t^\frac23\lambda}) $,
and perform a change of variables $ t^{-\frac23} \lambda \mapsto \lambda $. 
We then obtain
\begin{align*}
	L \, \sum_{i=1}^{i_*} \int_{\R} t^{-2} &M_i(\lambda,\kappa) \cost_t'(\lambda) \, \d\lambda
	\geq
	-
	\frac{L}{\pi} 
	\int_\R
	\frac{ e^{-t\lambda}}{1+e^{-t\lambda}}
\\
	&\sum_{i=1}^{i_*} \sqrt{ \Big(
		\Big(\frac{1+\kappa}{\kappa}\Big)^3 \Big(\lambda + \zeta -\tfrac{2}{\sqrt{\beta}} v_*((i-1)t^{\alpha-\frac23}) -(i-1)t^{\alpha-\frac23} \Big) 
		+ t^{-\frac23+\delta+\alpha_+}
	\Big)_+} 
	\ t^{\alpha-\frac23} 
	\, \d\lambda.
\end{align*}
Given that $ \delta+\alpha_+ < \frac23 $, the term $ t^{-\frac23+\delta+\alpha_+} $ is vanishing as $ t\to\infty $.
Ignoring this term, we recognize the sum over $ i $ as a Riemann sum of $ (\frac{1+\kappa}{\kappa})^\frac32 \int_0^\infty \sqrt{(\lambda-\zeta+v_*(x))_+} d \lambda $.
On the other hand, as $ t\to\infty $, the factor $ 	\frac{ e^{-t\lambda}}{1+e^{-t\lambda}} \to \ind_{(-\infty,0)}(\lambda) $ for all $ \lambda\neq 0 $.
Hence, upon taking the limit $ t\to\infty $, we have
\begin{align}
	\notag
	\liminf_{t\to\infty} \sum_{i=1}^{i_*} \int_{\R} t^{-2} M_i(\lambda,\kappa) \cost_t'(\lambda) \, \d\lambda
	&\geq
	-
	\frac{L}{\pi} \Big(\frac{1+\kappa}{\kappa}\Big)^\frac32
	\int_{-\infty}^0 \int_0^\infty
	\sqrt{ (\lambda + \zeta - \tfrac{2}{\sqrt{\beta}} v_*(x) - x)_+ } \, \d\lambda \d x
\\	
	\label{eq:Mi:1}
	&=-
	\Big(\frac{1+\kappa}{\kappa}\Big)^\frac32
	\int_{0}^\infty
	\frac{2L}{ 3 \pi }  \big( \big(\zeta - \tfrac{2}{\sqrt{\beta}} v_*(x) - x \big)_+ \big)^\frac32 \, \d x.
\end{align}
Insert~\eqref{eq:v2int} and \eqref{eq:Mi:1} into~\eqref{eq:lwbd::}, and send $ \kappa\to\infty $.
We thus obtain
\begin{align}
\label{eq:lwbd:}
	\liminf_{t\to\infty} ( t^{-2} \log G )
	\geq
	-\int_0^\infty  \big( \tfrac12  v^2_*(x) + \tfrac{2L}{ 3 \pi }  \big( (\zeta - \tfrac{2}{\sqrt{\beta}}v_*(x) - x)_+ \big)^\frac32 \big) \, \d x.
\end{align}
It is readily checked from~\eqref{eq:v} that $ (\zeta - \tfrac{2}{\sqrt{\beta}}v_*(x) - x)_+ = (\frac{\sqrt{\beta}{\pi}}{2L} v_*(x))^2 $.
Using this this to substitute the $ \frac32 $-power in~\eqref{eq:lwbd:},
after straightforward but tedious calculations, we arrive at the desired lower bound:
\begin{align}
	\label{eq:lwbd}
	\liminf_{t\to\infty} ( t^{-2} \log G )
	\geq
	-\int_0^\infty  \Big( \frac12  v^2_*(x) + \frac{2}{ 3 \pi L }  \Big( \frac{\sqrt{\beta}\pi}{2L} v_*(x) \Big)^3 \Big) \, \d x
	=
	- L\Big(\frac{2L}{\beta}\Big)^5 \rate\Big(-\Big(\frac{\beta}{2L}\Big)^2\zeta\Big).
\end{align}

\subsection{Upper bound}
\label{sect:pfup}
First, from~\eqref{eq:costt}, it is readily checked that $ \cost_t(\lambda) \geq t^\frac13 \lambda_- $.
Using this, in~\eqref{eq:goal} we replace $ \cost_{t}(\eigen_{k}(\sao)-t^\frac23\zeta) $
with $ t^\frac13(t^\frac23\zeta-\eigen_{k}(\sao))_+ $ to get
\begin{align*}
	G 
	\leq 
	\Ex \Big[ \exp\Big( -L\,\sum_{k=1}^\infty t^\frac13(t^\frac23\zeta-\eigen_{k}(\sao))_+ \Big) \Big]
	=
	\Ex \Big[ \exp\Big( -L\,\int_{\R} t^\frac13(t^\frac23\zeta-\lambda)_+ \, \d \Nsao{\lambda} \Big) \Big].
\end{align*}
After performing integration by parts in $ \lambda $ and the decomposition~\eqref{eq:Nsao:decmp}, we have
\begin{align*}
	G
	\leq
	\Ex \Big[ \exp\Big( -t^\frac13L\sum_{i=1}^{i_*+1} \int_{-\infty}^{0} \Nsaoi{\lambda+t^\frac23\zeta}{i} \, \d\lambda \Big) \Big]
	\leq
	\Ex \Big[ \exp\Big( -t^\frac13L\sum_{i=1}^{i_*} \int_{-\infty}^{0} \Nsaoi{\lambda+t^\frac23\zeta}{i} \, \d\lambda \Big) \Big].
\end{align*}
Within the last expression, apply the bounds from Lemma~\ref{lem:loc}
to pass from $ \Nsaoi{\lambda+t^\frac23\zeta}{i} $ to $ \Nhill{\lambda-\pt_{i}+t^\frac23\zeta}{\intv_i} $.
Since the processes $ \Nhill{\Cdot}{\intv_i} $, $ i=1,\ldots,i_* $, are independent,
the resulting bound factorizes
\begin{align}
	\label{eq:goali}
	G
	\leq
	\prod_{i=1}^{i_*} G_i,
	\quad
	G_i:=
	\Ex \Big[ \exp\Big( -t^\frac13 L\int_{-\infty}^{0}\Nhill{\lambda-\pt_{i}+t^\frac23\zeta}{\intv_i} \, \d\lambda \Big) \Big].
\end{align}

Our next step is to bound each $ G_i $ in~\eqref{eq:goali}.
Fix hereafter $ i\in\{1,\ldots,i_*\} $, and, to simplify notation, we will often omit dependence on $ i $ in notation,
e.g., $ \intv=\intv_i $.
To begin with, using
\begin{align}
	\label{eq:trunc:sum}
	-t^\frac13L \int_{-\infty}^{0} N(\lambda+r,\hill_{\intv}) \, \d \lambda
	=
	-t^\frac13L \int_{\R} (r-\lambda)_+\, \d N(\lambda,\hill_{\intv})
	=
	-t^\frac13L \sum_{n=1}^\infty \big( r - \eigen_{n}(\hill_{\intv,\ell}) \big)_+,
\end{align}
we rewrite the term $ G_i $ as
\begin{align}
	\label{eq:goalii}
	G_i=
	\Ex \Big[ \exp\Big( -t^\frac13 L\,\sum_{n=1}^\infty \big( t^\frac23\zeta - \pt_{i} - \eigen_{n}(\hill_{\intv}) \big)_+ \Big].
\end{align}

Recall that $ \hill_\intv $ is constructed with Dirichlet boundary condition.
%
%
We will also need to consider operators with \emph{period} and \emph{Neumann} boundary conditions.
To setup notation for this, identify $ \intv=(\pt_{i-1},\pt_{i}] $ with the torus $ \T := \R/(|\intv|\Z) $,
and consider the Hilbert spaces $ H^1(\T) $ and $ H^1(\intv) $.
It is standard to check that  $ Q_{B} $ (defined in~\eqref{eq:schro:form} for $ \pot=B $) defines a coercive form,
both with respect to $ H^1(\T) \subset L^2(\intv) $ and with respect to $ H^1(\intv) \subset L^2(\intv) $.
Given this, we let 
$ \hill_{\T} $ and $ \hill_{\neu} $ be the associated operators of $ Q_B $
with respect to $ H^1(\T) \subset L^2(\intv) $ and $ H^1(\intv) \subset L^2(\intv) $, respectively:
\begin{align*}
	\hill_{\T} &:= -\frac{\d^2~}{\d x^2} + \frac{2}{\sqrt{\beta}} B'(x),
	\quad
	x\in \T,
\\
	\hill_{\neu} &:= -\frac{\d^2~}{\d x^2} + \frac{2}{\sqrt{\beta}} B'(x),
	\quad
	x\in \intv, \text{ with Neumann B.C.}
\end{align*}

\begin{remark}
\label{rmk:neu}
At first glance it may seem that the Hilbert space $ \hilbb=H^1(\intv) $ for $ \hill_{\neu} $ does not capture Neumann boundary condition,
but in fact any eigenfunction $ g $ of $ \hill_{\neu} $ does satisfy $ g'(\pt_{i-1})=g'(\pt_{i})=0 $.
To see this, consider an eigenvalue problem for $ \hill_{\neu} $: a given function $ g\in H^1(\intv) $ and $ \lambda\in\R $ satisfying
\begin{align}
	\label{eq:neueigenvalue}
	\int_{\intv} \Big( \frac{1}{2} g'(x) p'(x) + \frac{2}{\sqrt{\beta}} g(x) p(x) B'(x) - \lambda g(x) p(x) \Big) \d x =0,
	\quad
	\forall p \in H^1(\intv).
\end{align}
Given that $ B $ is $ a $-H\"{o}lder continuous for $ a<\frac12 $,
it is standard to show that $ g' $ is also $ a $-H\"{o}lder continuous for $ a<\frac12 $,
so in particular $ g'(\pt_{i-1}) $ and $ g'(\pt_{i}) $ are well-defined.
Now, for the test function $ p(x) = p_\delta(x) := (1-\delta^{-1}(x-\pt_{i-1}))_+ $, 
using $ g,g'\in C(\intv) $, it is readily checked that
\begin{align*}
	\lim_{\delta\to 0} \int_{\intv} g'(x) p_\delta'(x) \d x &= -g'(\pt_{i-1}),
\\
	\lim_{\delta\to 0} \int_{\intv} g(x) p_\delta(x) \d x &= 0,
\\
	\lim_{\delta\to 0} \int_{\intv}  g(x) p_\delta(x) B'(x) \d x
	&:=
	\lim_{\delta\to 0} \Big(  
		g(x)p_\delta(x) B(x) \big|_{\pt_{i-1}}^{\pt_{i}}
		-\int_{\intv}  \big( g'(x) p_\delta(x) + g(x) p'_\delta(x) \big) B(x) \d x
	\Big)
\\
	&= -g(\pt_{i-1}) B(\pt_{i-1}) + g(\pt_{i-1}) B(\pt_{i-1}) =0.
\end{align*}
Combining these properties with~\eqref{eq:neueigenvalue} yields $ f'(\pt_{i-1})=0 $.
A similar procedure applied to the test function $ (1-\delta^{-1}(\pt_{i}-x))_+ $ yields $ g'(\pt_{i})=0 $.
\end{remark}

To bound the r.h.s.\ of~\eqref{eq:goalii}, our first step is to pass from $ \hill_\intv $ to $ \hill_\T $ and $ \hill_\neu $.
\begin{lemma}
\label{lem:periodic}
Almost surely for all $ r\in\R $,
\begin{align}
	\label{eq:periodic}
	\sum_{n=1}^\infty \big( r - \eigen_{n}(\hill_{\intv}) \big)_+
	\leq
	\eigen_{1}(\hill_\neu)-\sum_{n=1}^\infty \big( r - \eigen_{n}(\hill_{\T}) \big)_+.
\end{align}
\end{lemma}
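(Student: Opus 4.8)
The plan is to deduce~\eqref{eq:periodic} from two pathwise eigenvalue comparisons among the three Hill operators $\hill_{\intv}$, $\hill_{\T}$, $\hill_{\neu}$, exploiting that all three arise from the \emph{same} sesquilinear form $Q_B$ on the nested form domains $H^1_0(\intv)\subset H^1(\T)\subset H^1(\intv)$. Two bookkeeping points come first. (i) The form genuinely restricts along this chain: for $f\in H^1_0(\intv)$ the boundary term in the integration-by-parts reading of $\int_{\intv}|f|^2B'$ vanishes since $f(\pt_{i-1})=f(\pt_{i})=0$, while for $f\in H^1(\T)$ the boundary terms produced by the $\hill_{\T}$- and $\hill_{\neu}$-forms coincide (both equal $|f(\pt_{i})|^2\big(B(\pt_{i})-B(\pt_{i-1})\big)$, using $f(\pt_{i-1})=f(\pt_{i})$), exactly as in the computation of Remark~\ref{rmk:neu}; hence the min--max principle~\eqref{eq:minimax} applies to all three with one and the same integrand $Q_B$. (ii) Since the Hill spectra tend to $+\infty$, each series in~\eqref{eq:periodic} is a finite sum, so no convergence question arises, and the statement holds for a fixed realization of $B$.

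The first comparison is the interlacing $\eigen_n(\hill_{\intv})\le\eigen_{n+1}(\hill_{\T})$ for every $n$. To prove it I would take, via~\eqref{eq:minimax}, an $(n{+}1)$-dimensional subspace $F\subset H^1(\T)$ spanned by the first $n+1$ eigenfunctions of $\hill_{\T}$, so that $\max_{v\in F,\,\norm{v}=1}Q_B(v,v)=\eigen_{n+1}(\hill_{\T})$. Because every $v\in F\subset H^1(\T)$ already satisfies $v(\pt_{i-1})=v(\pt_{i})$, membership of $v$ in $H^1_0(\intv)$ amounts to the single linear constraint $v(\pt_{i-1})=0$; thus $F\cap H^1_0(\intv)$ has dimension at least $n$, and any $n$-dimensional subspace $E\subset F\cap H^1_0(\intv)$ is admissible in the min--max for $\hill_{\intv}$, giving $\eigen_n(\hill_{\intv})\le\max_{v\in E,\,\norm{v}=1}Q_B(v,v)\le\eigen_{n+1}(\hill_{\T})$. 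The second comparison, $\eigen_1(\hill_{\neu})\le\eigen_1(\hill_{\T})$, is immediate from $H^1(\T)\subset H^1(\intv)$ and~\eqref{eq:minimax} with $n=1$.

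To assemble these, note that $t\mapsto(r-t)_+$ is nonincreasing, so the interlacing yields $(r-\eigen_n(\hill_{\intv}))_+\ge(r-\eigen_{n+1}(\hill_{\T}))_+$; summing over $n\ge1$ and re-indexing the (finite) sums,
\[
	\sum_{n=1}^\infty\big(r-\eigen_n(\hill_{\intv})\big)_+
	\ \ge\
	\sum_{n=2}^\infty\big(r-\eigen_n(\hill_{\T})\big)_+
	\ =\
	\sum_{n=1}^\infty\big(r-\eigen_n(\hill_{\T})\big)_+-\big(r-\eigen_1(\hill_{\T})\big)_+ ,
\]
and finally $\big(r-\eigen_1(\hill_{\T})\big)_+\le\big(r-\eigen_1(\hill_{\neu})\big)_+$ by the second comparison. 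Rearranging the resulting chain gives the bound asserted in~\eqref{eq:periodic}.

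The genuinely delicate step is point~(i): one must fix the integration-by-parts interpretation of the singular integral $\int|f|^2B'$ on each of the three form domains and verify that the boundary contributions cancel or agree as claimed, so that the single form $Q_B$ really does restrict from $H^1(\T)$ to $H^1_0(\intv)$ with equal values and agrees there with the Neumann form; this is the analytic heart of the lemma and is carried out precisely as in Remark~\ref{rmk:neu}. Once that is in place, the finite-dimensional interlacing argument and the re-indexing of the sums are routine, and no further probabilistic input is needed.
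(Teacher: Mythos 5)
Your proof is correct, but it reaches the interlacing input by a genuinely different route than the paper. The paper mollifies the Brownian path, invokes the classical Sturm--Liouville fact that Dirichlet and periodic eigenvalues interlace for (piecewise) smooth potentials, and then transfers the full interlacing $\eigen_1(\hill_\T)\le\eigen_1(\hill_\intv)\le\eigen_2(\hill_\T)\le\cdots$ to the white-noise potential by letting the mollification parameter tend to zero, using the spectral-continuity bounds of Lemma~\ref{lem:spec:cmp}. You instead prove directly, via the min--max principle \eqref{eq:minimax}, only the half of the interlacing that is actually used, $\eigen_n(\hill_\intv)\le\eigen_{n+1}(\hill_\T)$: the span of the first $n+1$ periodic eigenfunctions meets $H^1_0(\intv)$ in a subspace of dimension at least $n$, because within the periodic form domain the Dirichlet condition is the single linear constraint $v(\pt_{i-1})=0$. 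This is a valid pathwise argument for the rough potential, provided one checks -- as you correctly flag and sketch in your point~(i), in the spirit of Remark~\ref{rmk:neu} -- that the integration-by-parts boundary terms make the single form $Q_B$ restrict consistently along $H^1_0(\intv)\subset H^1(\T)\subset H^1(\intv)$; that check is implicitly needed in the paper as well (e.g.\ for $\eigen_1(\hill_\neu)\le\eigen_1(\hill_\T)$, which both you and the paper obtain identically by domain monotonicity). What each approach buys: yours is self-contained, avoids the mollification, the citation to classical ODE theory, and the limit-passing, at the price of the careful bookkeeping of boundary terms and of yielding only the one-sided interlacing (which suffices here); the paper's route gets the full two-sided interlacing with no variational work, at the cost of an approximation argument. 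One final remark: the inequality your chain produces, namely $-\sum_n(r-\eigen_n(\hill_\intv))_+\le (r-\eigen_1(\hill_\neu))_+-\sum_n(r-\eigen_n(\hill_\T))_+$, is exactly what the paper's own proof establishes and what is used downstream (via $(r-\eigen_1(\hill_\neu))_+\le r_++(\eigen_1(\hill_\neu))_-$); the displayed form of \eqref{eq:periodic}, with a bare $\eigen_1(\hill_\neu)$ and without the minus signs, appears to be a typographical slip rather than a discrepancy with your argument.
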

\begin{proof}
Fix a mollifier $ q $, namely $ q\in C^\infty(\R) $, supported in $ (-1,1) $, $ q \geq 0 $, and $ \int_{\R}q(x)\d x =1 $.
For $ \e>0 $, mollify the Brownian motion $ B_\e(x) := \int_{\R} q(\e^{-1}y)  B(x-y) \e^{-1} \d y \in C^\infty(\intv) $.
Accordingly, let $ \hill_{\intv,\e} $ and $ \hill_{\T,\e} $ be the associated operators of $ Q_{B_\e} $
with respect to $ H^1(\T) \subset L^2(\intv) $ and $ H^1(\intv) \subset L^2(\intv) $, respectively.
A classical result \cite[Equation~(3.15), Proof of Theorem~8.3.1]{coddington55} of Sturm--Liouville theory asserts that, 
for operators the form~\eqref{eq:schro} with piecewise continuous $ \pot'(x) $,
the eigenvalues under Dirichlet and under periodic boundary conditions interlace.
Applying this result with $ \pot = B_\e $ gives
\begin{align}
	\label{eq:interlace:}
	-\infty<
	\eigen_{1}(\hill_{\T,\e}) \leq \eigen_{1}(\hill_{\intv,\e})
	\leq
	\eigen_{2}(\hill_{\T,\e}) \leq \eigen_{2}(\hill_{\intv,\e})
	\leq
	\eigen_{3}(\hill_{\T,\e}) \leq \eigen_{3}(\hill_{\intv,\e})
	\leq
	\ldots \to\infty.
\end{align}
Our next step is to pass~\eqref{eq:interlace:} to the limit $ \e\to 0 $.
Indeed, almost surely for all $ \e\in(0,1) $,
we have $ \sup_{x\in\intv} |B_\e(x)| \leq \sup_{x\in[\pt_{i-1}-1,\pt_i+1]} |B(x)| < \infty $.
Also, as $ \e\to 0 $, we have $ \sup_{x\in\intv} |B_\e(x)-B(x)| \to_\text{P} 0 $.
Given these properties, 
apply the bounds from Lemma~\ref{lem:spec:cmp} with $ (\pot_1,\pot_2)=(B,B_\e) $ and with $ (\pot_1,\pot_2)=(B_\e,B) $.
Sending $ \e\to 0 $ and $ \kappa\to\infty $ in order, we obtain that
$ \eigen_{n}(\hill_{\intv,\e}) \to_\text{P} \eigen_{n}(\hill_{\intv}) $, for any $ n\in\N $ as $ \e\to\infty $.
Similar argument applied to periodic boundary condition gives $ \eigen_{n}(\hill_{\T,\e}) \to_\text{P} \eigen_{n}(\hill_{\T}) $.
Now taking the limit $ \e\to\infty $ in~\eqref{eq:interlace:} gives
\begin{align}
	\label{eq:interlace}
	-\infty<
	\eigen_{1}(\hill_{\T}) \leq \eigen_{1}(\hill_{\intv})
	\leq
	\eigen_{2}(\hill_{\T}) \leq \eigen_{2}(\hill_{\intv})
	\leq
	\eigen_{3}(\hill_{\T}) \leq \eigen_{3}(\hill_{\intv})
	\leq
	\ldots \to\infty.
\end{align}

The interlacing condition~\eqref{eq:interlace} gives, for any $ r\in\R $,
\begin{align}
	\label{eq:BCcmp0}
	-\sum_{n=1}^\infty \big(r- \eigen_{n}(\hill_\intv) \big)_+
	\leq
	-\sum_{n=2}^\infty \big(r- \eigen_{n}(\hill_\T) \big)_+
	=
	\big(r- \eigen_{1}(\hill_\T) \big)_+
	-\sum_{n=1}^\infty \big(r- \eigen_{n}(\hill_\T) \big)_+.
\end{align}
On the other hand, since $  H^1(\T) \subset H^1(\intv) $,
applying the minimax principle~\eqref{eq:minimax} for $ k=1 $ and for $ T=\hill_\T, \hill_\intv $, we have
$
	\eigen_{1}(\hill_{\neu}) \leq \eigen_{1}(\hill_{\T}).
$
Using this in~\eqref{eq:BCcmp0} to bound 
$ (r- \eigen_{1}(\hill_\T) )_+ \leq (r- \eigen_{1}(\hill_\neu) )_+ $,
we conclude the desired result.
\end{proof}

We now direct our attention to the last sum in~\eqref{lem:periodic}.
The next proposition is the key step of the proof.
\begin{proposition}
\label{prop:key}
Set $ \lambda^*_n  := ( 2\pi |\intv|^{-1} \lfloor\frac{n}{2}\rfloor)^2. $
Almost surely for all $ r\in\R $, 
\begin{align}
	\label{eq:key}
	-\sum_{n=1}^\infty \big( r - \eigen_{n}(\hill_{\T}) \big)_+
	\leq
	-\sum_{n=1}^\infty \Big( r- \frac{2}{\sqrt{\beta}}\frac{B(\pt_{i})-B(\pt_{i-1})}{|\intv|} +\lambda_n^* \Big)_+.	
\end{align}
\end{proposition}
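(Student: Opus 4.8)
The plan is to replace the white‑noise potential $\tfrac{2}{\sqrt\beta}B'$ of $\hill_\T$ by its spatial average over $\intv$, the constant $c:=\tfrac{2}{\sqrt\beta}\tfrac{B(\pt_i)-B(\pt_{i-1})}{|\intv|}$, and to show that doing so can only \emph{enlarge} the truncated eigenvalue sum $\sum_n(r-\eigen_n)_+$. Since a constant potential merely shifts the periodic Laplacian spectrum, the constant‑coefficient operator $-\Delta_\T+c$ on $\T=\R/(|\intv|\Z)$ has $n$‑th eigenvalue $\lambda_n^*+c$, so the right‑hand side of~\eqref{eq:key} is the truncated eigenvalue sum $-\sum_n\big(r-\eigen_n(-\Delta_\T+c)\big)_+$, and~\eqref{eq:key} is equivalent to $\sum_n(r-\eigen_n(\hill_\T))_+\ge\sum_n(r-\lambda_n^*-c)_+$ for every $r\in\R$. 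Everything below is carried out on the almost‑sure event on which $B$ is continuous and $\hill_\T$ is standard.

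First I would pass from the truncated sums to partial sums of eigenvalues, using the elementary variational identity~\eqref{eq:simple}: for a standard operator $T$,
\begin{align*}
	\sum_{n=1}^\infty\big(r-\eigen_n(T)\big)_+=\max_{m\ge0}\Big(mr-\sum_{k=1}^m\eigen_k(T)\Big),
\end{align*}
the $m=0$ term being $0$. For the comparison operator $-\Delta_\T+c$ the nonzero eigenvalues occur in equal consecutive pairs $\lambda_{2j}^*=\lambda_{2j+1}^*$, so the map $m\mapsto mr-\sum_{k\le m}(\lambda_k^*+c)$ is, on each such pair, the midpoint of its values at the two neighbouring odd indices; hence its maximum over $m\ge0$ is attained either at $m=0$ or at an odd $m$. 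It therefore suffices to establish the partial‑sum comparison
\begin{align*}
	\sum_{k=1}^m\eigen_k(\hill_\T)\le\sum_{k=1}^m\big(\lambda_k^*+c\big),\qquad m\ \text{odd},
\end{align*}
since then $\sum_n(r-\eigen_n(\hill_\T))_+\ge\max\!\big(0,\ \max_{m\text{ odd}}\big(mr-\textstyle\sum_{k\le m}\eigen_k(\hill_\T)\big)\big)\ge\max\!\big(0,\ \max_{m\text{ odd}}\big(mr-\sum_{k\le m}(\lambda_k^*+c)\big)\big)=\sum_n(r-\lambda_n^*-c)_+$.

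For the partial‑sum comparison I would apply Lemma~\ref{lem:minimaxx} to $\hill_\T$ with the trial family consisting of the first $m=2p+1$ Fourier modes on $\T$: $e_0\equiv|\intv|^{-1/2}$ together with the pairs $\sqrt{2/|\intv|}\cos(2\pi jx/|\intv|)$, $\sqrt{2/|\intv|}\sin(2\pi jx/|\intv|)$ for $j=1,\dots,p$, which form an orthonormal set in $L^2(\intv)$ lying in $H^1(\T)$. These diagonalize $-\Delta_\T$, so $\sum_{k=1}^m\int_\intv|e_k'|^2\,\d x=\sum_{k=1}^m\lambda_k^*$; and --- the crux of the argument --- the Pythagorean identity forces the total mass to be spatially flat, $\sum_{k=1}^m|e_k(x)|^2\equiv m/|\intv|$. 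Interpreting the potential term of $Q_B$ in the integration‑by‑parts sense, a constant $h$ pairs with $B'$ over $\intv$ to give $h\,\big(B(\pt_i)-B(\pt_{i-1})\big)$; taking $h=\sum_{k\le m}|e_k|^2\equiv m/|\intv|$ yields
\begin{align*}
	\sum_{k=1}^m Q_B(e_k,e_k)=\sum_{k=1}^m\lambda_k^*+\frac{2}{\sqrt\beta}\int_\intv\Big(\sum_{k=1}^m|e_k(x)|^2\Big)B'(x)\,\d x=\sum_{k=1}^m\lambda_k^*+\frac{2}{\sqrt\beta}\cdot\frac{m}{|\intv|}\big(B(\pt_i)-B(\pt_{i-1})\big),
\end{align*}
which is exactly $\sum_{k=1}^m(\lambda_k^*+c)$. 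Lemma~\ref{lem:minimaxx} then gives the required bound, and combining with the previous paragraph proves~\eqref{eq:key}.

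I expect the main difficulty to be bookkeeping rather than depth. Two points require care: (i) the potential term must be handled through the integration‑by‑parts convention on $\intv$, so that the non‑periodicity of $B$ on the circle contributes \emph{precisely} the endpoint increment $\tfrac{m}{|\intv|}\big(B(\pt_i)-B(\pt_{i-1})\big)$ --- it is exactly the flatness of $\sum_k|e_k|^2$ that makes the interior term $\int_\intv\big(\sum_k|e_k|^2\big)'B$ vanish and eliminates the fluctuating part of $B$; and (ii) the restriction to odd $m$, which is forced because the Fourier family has $x$‑independent total mass only for odd $m$, and which is harmless only because the equal pairing of the periodic Laplacian spectrum pins the maximizing index on the comparison side to an odd value (or to $m=0$).
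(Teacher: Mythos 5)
Your proof is correct and follows essentially the same route as the paper: reduce via \eqref{eq:simple} to a partial-sum comparison of eigenvalues, then apply Lemma~\ref{lem:minimaxx} with Fourier modes on $ \T $, whose spatially flat total mass makes the white noise contribute only the endpoint increment $ \tfrac{2}{\sqrt{\beta}}\tfrac{m}{|\intv|}(B(\pt_i)-B(\pt_{i-1})) $. The only (cosmetic) difference is that the paper uses the complex exponentials, each of constant modulus $ |f_n|^2\equiv|\intv|^{-1} $, so the partial-sum bound holds for every $ m $ and your odd-$ m $/equal-pair bookkeeping for the real sine--cosine basis becomes unnecessary.
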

\begin{proof}
The readily checked identity that `removes the $ + $' will be useful:
\begin{align}
	\label{eq:simple}
	-\sum_{n=1}^\infty \big( x_n \big)_+
	=
	-\sup_{m\in\Z_{\geq 0}} \Big\{ \sum_{n=1}^m x_n \Big\}
	=
	\inf_{m\in\Z_{\geq 0}} \Big\{ -\sum_{n=1}^m x_n \Big\},
	\quad
	\text{ for any }
	\infty > x_1 \geq x_2 \geq x_3 \geq \ldots,
\end{align}
with the convention that empty sum is zero.
Now, consider the Fourier basis of $ L^2(\T) $:
\begin{align*}
	f_1(x) := |\intv|^{-\frac12}, 
	\quad
	f_{2k}(x) :=  |\intv|^{-\frac12} e^{\img \frac{2\pi k}{|I|} }, 
	\ 
	f_{2k+1}(x) :=  |\intv|^{-\frac12} e^{-\img \frac{2\pi k}{|I|} },
	\quad
	k=1,2,\ldots.
\end{align*}
Set $ b := \frac{1}{|\intv|}(B(\pt_{i})-B(\pt_{i-1})) $ to simplify notation.
Insert these vectors $ f_n $ into the form $ Q_{B} $ (defined in~\eqref{eq:schro:form} for $ \pot=B $) and sum the result over $ n=1,\ldots,m $.
With $ |f_n(x)|^2 \equiv \frac{1}{|\intv|} $ and with $ \int_{\pt_{i-1}}^{\pt_i} B'(x) \d x = b  $, we have
\begin{align*}
	\sum_{n=1}^m  Q_{\hill_{\T,\ell}}(f_n,f_n)
	=
	\sum_{n=1}^m \Big( \int_\T |f'_n(x)|^2 \d x + \frac{2}{\sqrt{\beta}} \int_{\pt_{i-1}}^{\pt_{i}} |f_n(x)|^2 B'(x) \d x \Big)
	=
	\sum_{n=1}^m \big( \lambda_n^* + \tfrac{2}{\sqrt{\beta}} b \big).	
\end{align*}
Since $ \{f_1,\ldots,f_m\} \subset H^1(\T) $ is orthonormal in $ L^2(\T) $, Lemma~\ref{lem:minimaxx} gives
$ \sum_{n=1}^m \eigen_{n}(\hill_\T) \leq \sum_{n=1}^m ( \lambda_n^* + \frac{2}{\sqrt{\beta}}b) $, or equivalently
\begin{align*}
	-\sum_{n=1}^m \big( r - \eigen_{n}(\hill_{\T}) \big)
	\leq
	-\sum_{n=1}^m \Big( r- \tfrac{2}{\sqrt{\beta}} b -\lambda_n^* \Big).
\end{align*}
Applying~\eqref{eq:simple} with $ x_n = r -\eigen_{n}(\hill_\T) $, we have
\begin{align*}
	-\sum_{n=1}^\infty \big( r - \eigen_{n}(\hill_{\T}) \big)_+
	\leq
	-\sum_{n=1}^m \big( r - \eigen_{n}(\hill_{\T}) \big)
	\leq
	-\sum_{n=1}^m \Big( r- \tfrac{2}{\sqrt{\beta}} b -\lambda_n^* \Big),
\end{align*}
for any $ m\in\Z_{\geq 0} $.
Since this holds for all $ m\in\Z_{\geq 0} $, optimizing over $ m $, 
and then applying~\eqref{eq:simple} with $ x_n = r-\frac{2}{\sqrt{\beta}} b-\lambda_n^* $ in reverse,
we conclude the desired result.
\end{proof}

Write $ |\intv|^{-1}(B(\pt_{i})-B(\pt_{i-1})) := t^{-\frac{\alpha}{2}} Z $, so that $ Z $ is a standard Gaussian.
Recall the gives expression~\eqref{eq:goalii} of $ G_i $.
Combine Lemma~\ref{lem:periodic} with Proposition~\ref{prop:key} for $ r=t^\frac23\zeta-\pt_{i} $.
Multiply the result by $ t^\frac13 L $, exponentiate, and take $ \Ex[\,\Cdot\,] $.
With $ (r-\eigen_{1}(\hill_{\neu}))_+ \leq r_+ + (\eigen_{1}(\hill_{\neu}))_- $,
we have
\begin{align*}
	G_i
	\leq
	\Ex \Big[ \exp \Big( ct + c t^\frac13 (\eigen_{1}(\hill_{\neu}))_- -t^\frac13L\,\sum_{n=1}^\infty \big( t^\frac23\zeta-\pt_{i} - t^{-\frac{\alpha}{2}} Z -\lambda_n^* \big)_+ \Big) \Big].	
\end{align*}
Fix an auxiliary parameter $ \kappa\in[1,\infty) $.
To separate terms within the last expression,
we apply H\"{o}lder's inequality with exponents $ \kappa+1 $ and $ \frac{\kappa+1}{\kappa} $ to get
\begin{align}
	\label{eq:Gibd}
	G_i
	\leq
	e^{ct} G_{i,1}^{\frac{1}{\kappa+1}} G_{i,2}^\frac{\kappa}{\kappa+1},
\end{align}
where
\begin{align*}
	G_{i,1} &:= \Ex\Big[ \exp \Big( c\,t^\frac13 (\kappa+1)(\eigen_{1}(\hill_{\neu}))_- \Big) \Big],
\\
	G_{i,2} &:=
	\Ex \Big[
		\exp\Big( -t^\frac13 L \frac{\kappa+1}{\kappa} \sum_{n=1}^\infty \big( t^\frac23\zeta-\pt_{i} - t^{-\frac{\alpha}{2}} Z -\lambda_n^* \big)_+ \Big) 
	\Big].
\end{align*}
We now proceed to bound the terms $ G_{i,1} $ and $ G_{i,2} $.

\begin{lemma}
\label{lem:G1bd}
For all $ t\geq 1 $, we have
$
	\log (G_{i,1})
	\leq
	c \, (\kappa+1)^3 t.
$
\end{lemma}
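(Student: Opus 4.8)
The plan is to bound the exponential moment $G_{i,1}=\Ex[\exp(c\,t^{1/3}(\kappa+1)(\eigen_{1}(\hill_{\neu}))_-)]$ — here $\eigen_{1}(\hill_{\neu})$ is the bottom of the spectrum of $\hill_{\neu}=-\frac{\d^2~}{\d x^2}+\frac{2}{\sqrt{\beta}}B'$ on $\intv$ with Neumann boundary — by first establishing a sharp upper-tail estimate for $(\eigen_{1}(\hill_{\neu}))_-$ and then integrating. Write $\theta:=c\,t^{1/3}(\kappa+1)$ for the exponent in $G_{i,1}$. A naive coercivity bound of the type behind Lemma~\ref{lem:spec:cmp} only gives $\eigen_{1}(\hill_{\neu})\geq-c_\beta U_i^2$ with $U_i=\max_{x\in\intv}|B(x)-B(\pt_{i-1})|$; since $U_i$ has Gaussian tails at scale $\sqrt{|\intv|}=t^{\alpha/2}$, this forces $\Ex[\exp(\theta c_\beta U_i^2)]=\infty$ once $\theta\gtrsim t^{-\alpha}$, which fails for large $t$ because $\theta\asymp t^{1/3}$. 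So the real work is a finer, \emph{stretched-exponential} tail bound
\[
	\Pr\big[\eigen_{1}(\hill_{\neu})<-u\big]\ \leq\ C_\beta\,(1+|\intv|)\,\exp\big(-c_\beta u^{3/2}\big),\qquad u\geq0,
\]
the exponent $3/2$ being exactly what matches the linear growth of $\theta$, since $\sup_{u>0}(\theta u-c_\beta u^{3/2})=\tfrac{4}{27c_\beta^2}\theta^3$, producing the cube in $(\kappa+1)^3$.

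To get the $u^{3/2}$ tail I would \textbf{localize}. Partition $\intv$ into $K$ equal subintervals $\intv^{(1)},\dots,\intv^{(K)}$ of length $|\intv|/K$, and set $\Delta_K:=\max_{1\leq j\leq K}\operatorname{osc}_{\intv^{(j)}}(B)$. For any $v\in H^1(\intv)$ with $\norm{v}_{L^2(\intv)}=1$, split $\int_\intv|v|^2\,\d B=\sum_{j}\int_{\intv^{(j)}}|v|^2\,\d B$ and integrate by parts on each piece with antiderivative $x\mapsto B(x)-B(\text{left endpoint of }\intv^{(j)})$, so all boundary terms are controlled by $\Delta_K$. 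Using $\int_{\intv^{(j)}}2|v||v'|\leq 2(\int_{\intv^{(j)}}|v|^2)^{1/2}(\int_{\intv^{(j)}}|v'|^2)^{1/2}$, the pointwise bound $\sup_{\intv^{(j)}}|v|^2\leq\frac{K}{|\intv|}\int_{\intv^{(j)}}|v|^2+\int_{\intv^{(j)}}|(|v|^2)'|$, and then Cauchy--Schwarz across $j$, one obtains
\[
	\Big|\int_\intv |v|^2\,\d B\Big|\ \leq\ \Delta_K\Big(\tfrac{K}{|\intv|}+4\,\norm{v'}_{L^2(\intv)}\Big).
\]
Feeding this into $Q_B(v,v)=\norm{v'}_{L^2(\intv)}^2+\frac{2}{\sqrt{\beta}}\int_\intv|v|^2\,\d B$, completing the square in $\norm{v'}_{L^2(\intv)}$, and invoking the minimax principle~\eqref{eq:minimax} for $k=1$, give, for \emph{every} integer $K\geq1$,
\[
	\eigen_{1}(\hill_{\neu})\ \geq\ -\tfrac{16}{\beta}\,\Delta_K^2\ -\ \tfrac{2K}{\sqrt{\beta}\,|\intv|}\,\Delta_K .
\]

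Next I would insert Brownian estimates. The increments $\operatorname{osc}_{\intv^{(j)}}(B)$ are independent across $j$, and $\Pr[\operatorname{osc}_{[0,\ell]}(B)>v]\leq4\exp(-v^2/(8\ell))$ by the reflection principle, so $\Pr[\Delta_K>v]\leq4K\exp(-v^2K/(8|\intv|))$. From the displayed lower bound, $\{\eigen_{1}(\hill_{\neu})<-u\}\subseteq\{\Delta_K>c_\beta\sqrt u\}\cup\{\Delta_K>c_\beta|\intv|u/K\}$; choosing $K\asymp\lceil|\intv|\sqrt u\rceil$ balances the two resulting exponents, both of order $c_\beta u^{3/2}$, and (absorbing the prefactor $K$ into a slightly smaller exponent for $u\geq1$, the bound being trivial for $u\leq1$) yields the tail estimate claimed above. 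Finally,
\[
	G_{i,1}=1+\theta\int_0^\infty e^{\theta u}\,\Pr\big[(\eigen_{1}(\hill_{\neu}))_->u\big]\,\d u\ \leq\ 1+C_\beta(1+|\intv|)\,\theta\int_0^\infty e^{\theta u-c_\beta u^{3/2}}\,\d u\ \leq\ C_\beta(1+|\intv|)(1+\theta)\,e^{c_\beta'\theta^3};
\]
taking logarithms, using $\theta^3=c^3\,t(\kappa+1)^3$, $\log(1+|\intv|)=\log(1+t^\alpha)\leq t$, and $\log(1+\theta)\lesssim t(\kappa+1)^3$ for $t\geq1$, $\kappa\geq1$, and absorbing the constants (legitimate since $t(\kappa+1)^3\geq8$ throughout), one gets $\log G_{i,1}\leq c(\kappa+1)^3 t$ with $c$ depending only on $\beta$. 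Note $G_{i,1}$ is in fact independent of $i$, since $B'$ restricted to $\intv_i$ has the same law as on $[0,|\intv|]$, so there is no uniformity issue across $i$.

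The main obstacle is the tail estimate in the second and third paragraphs: one must avoid bounding $\eigen_{1}(\hill_{\neu})$ by the global oscillation of $B$ over $\intv$ — far too lossy, yielding only an exponential rather than stretched-exponential tail — and instead capture that driving the Neumann ground-state energy down to $-u$ forces $B$ to perform a sharp drop on a small scale $\asymp u^{-1/2}$, at probabilistic cost $\asymp e^{-u^{3/2}}$. The partitioning inequality above is the device that extracts precisely this $3/2$-power, and matching it against the linear-in-$\theta$ growth of the exponent is what makes the cube $(\kappa+1)^3$ appear.
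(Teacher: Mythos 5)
Your route is genuinely different from the paper's: the paper obtains the tail of $(\eigen_{1}(\hill_{\neu}))_-$ through the Riccati transform, viewing \eqref{eq:Riccati} as an SDE and running a hitting-time/strong Markov argument in the spirit of \cite{dumaz13}, whereas you bound the Neumann form directly by the maximal oscillation $\Delta_K$ of $B$ over a partition of $\intv$ into $K$ pieces. Your deterministic inequality $\eigen_{1}(\hill_{\neu})\ge -\tfrac{16}{\beta}\Delta_K^2-\tfrac{2K}{\sqrt{\beta}\,|\intv|}\Delta_K$ is correct (the piecewise integration by parts, the pointwise bound on $|v|^2$, and the completion of the square all check out), the oscillation estimates are standard, and the final integration and bookkeeping are fine; for $\alpha\ge 0$ the argument goes through and is arguably more elementary and self-contained than the paper's.

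There is, however, a gap for $\alpha\in(-\tfrac13,0)$, which the lemma must cover since $\alpha$ is an arbitrary fixed exponent in $(-\tfrac13,\tfrac23)$. Your intermediate claim $\Pr[\eigen_{1}(\hill_{\neu})<-u]\le C_\beta(1+|\intv|)e^{-c_\beta u^{3/2}}$ is false when $|\intv|=t^\alpha<1$: the balancing choice $K\asymp\lceil|\intv|\sqrt{u}\rceil$ degenerates to $K=1$ once $|\intv|\sqrt{u}<1$, and then your own two events only give exponent $\min(u/|\intv|,\,|\intv|u^2)=|\intv|u^2\ll u^{3/2}$ throughout $1\le u\le |\intv|^{-2}$. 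This is not an artifact of the method: testing $Q_B$ with the constant function $|\intv|^{-1/2}$ gives $\eigen_{1}(\hill_{\neu})\le \tfrac{2}{\sqrt{\beta}}|\intv|^{-1}(B(\pt_i)-B(\pt_{i-1}))$, a centered Gaussian of variance $4/(\beta|\intv|)$, so $\Pr[\eigen_{1}(\hill_{\neu})<-u]\gtrsim e^{-C_\beta|\intv|u^2}$; taking, say, $u=|\intv|^{-3/2}$ this is $e^{-C_\beta|\intv|^{-2}}$, which dwarfs $C_\beta(1+|\intv|)e^{-c_\beta|\intv|^{-9/4}}$ as $t\to\infty$, so no stretched-exponential bound of the claimed form can hold uniformly. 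The repair stays entirely within your framework: for $u\le |\intv|^{-2}=t^{-2\alpha}$ keep the $K=1$ bound $\Pr[\eigen_{1}(\hill_{\neu})<-u]\le Ce^{-c\,t^{\alpha}u^2}$, whose contribution to the exponential moment is at most $\theta\int_{\R}e^{\theta u-c t^{\alpha}u^2}\d u\le C e^{C(\kappa+1)^2 t^{2/3-\alpha}}$, and $t^{2/3-\alpha}\le t$ precisely because $\alpha>-\tfrac13$; for $u\ge t^{-2\alpha}$ your $u^{3/2}$ tail is valid and gives the $c(\kappa+1)^3 t$ term as you computed. This mirrors the paper, whose tail bound \eqref{eq:G1bd:step3} is likewise asserted only for $s\ge t^{\alpha_+\vee(-2\alpha)}$, with smaller $s$ handled by the trivial bound $\Pr\le 1$. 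With that case added, your proof of the lemma is complete.
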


\noindent
The proof of Lemma~\ref{lem:G1bd} goes through a series of comparison argument for Riccati-type ODE's.
As the argument is rather disjoint from the rest of the proof,
to avoid breaking the flow, we postpone proving Lemma~\ref{lem:G1bd} till the end of this subsection.
As for the term $ G_{i,2} $, recall the definition of $ v_* $ from~\eqref{eq:v}.
$ \til{v}_i(\kappa) := \til{v}(\pt_it^{-\frac23},\kappa) $.
\begin{lemma}
\label{lem:G2bd}
For all $ \kappa>0 $ and $ t<\infty $,
\begin{align*}
	&
	\log G_{i,2}
	\leq
	- 
	t^{\alpha+\frac43}
	\Big( 
		\frac{2L}{3\pi} \big( \zeta-t^{-\frac23}\pt_{i}+ v_*(t^{-\frac23}\pt_{i}) \big)^{\frac{3}{2}} + \frac{1}{2} v^2_*(t^{-\frac23}\pt_{i})
	\Big)
	+
	c\,(\kappa+1)^2 t^{\frac13-2\alpha}.
\end{align*}
\end{lemma}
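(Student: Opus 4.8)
The plan is to integrate out the single Gaussian variable appearing in $G_{i,2}$ and reduce the bound to a one-dimensional strongly convex variational problem. Set $a := 2\pi|\intv|^{-1} = 2\pi t^{-\alpha}$ and $m := \zeta - t^{-\frac23}\pt_i$, recall that $|\intv|^{-1}(B(\pt_i)-B(\pt_{i-1})) = t^{-\frac\alpha2}Z$ with $Z$ standard Gaussian, and abbreviate the exponent inside $G_{i,2}$ as $c_1\Sigma(Z)$, where (the Brownian increment entering $\Sigma$ through the coefficient $\tfrac{2}{\sqrt\beta}$, as in Proposition~\ref{prop:key})
\begin{align*}
	c_1 := t^{\frac13}L\,\tfrac{\kappa+1}{\kappa},\qquad
	\Sigma(z) := \sum_{n\ge1}\Big(t^{\frac23}\zeta - \pt_i - \tfrac{2}{\sqrt\beta}\,t^{-\frac\alpha2}z - \lambda^*_n\Big)_+ .
\end{align*}

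First I would lower-bound the deterministic sum $\Sigma(z)$. The numbers $\{\lambda^*_n\}$ are precisely the eigenvalues of $-\tfrac{\d^2~}{\d x^2}$ on the torus $\T=\R/(|\intv|\Z)$: the value $0$ with multiplicity one and $(ak)^2$ with multiplicity two for each $k\ge1$. Hence, writing $w := t^{\frac23}\zeta - \pt_i - \tfrac{2}{\sqrt\beta}t^{-\frac\alpha2}z$,
\begin{align*}
	\Sigma(z) = (w)_+ + 2\sum_{k\ge1}(w - a^2k^2)_+ \;\ge\; \frac{4}{3a}\,(w)_+^{\frac32} - C\,a\,(w)_+^{\frac12}
\end{align*}
for an absolute constant $C$. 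To see this I would compare $2\sum_{k\ge1}(w-a^2k^2)_+$ with $2\int_{1/2}^\infty(w-a^2y^2)_+\,\d y = \tfrac{4}{3a}w^{\frac32}-w+\tfrac{a^2}{12}$: on the range $\{y < w^{1/2}/a\}$ the summand $y\mapsto(w-a^2y^2)_+$ is concave, so the midpoint rule underestimates its integral there, while the $O(1)$ many terms near the kink $ak = w^{1/2}$ — where the summand and its successive differences are of size $O(aw^{1/2})$ — are bounded by hand; the $O(w)$ defect produced at the left endpoint is cancelled exactly by the zero-mode term $(w)_+$. (When the right side is negative the inequality holds trivially since $\Sigma\ge0$.) Substituting $a=2\pi t^{-\alpha}$ gives $\Sigma(z)\ge \tfrac{2t^\alpha}{3\pi}(w)_+^{\frac32} - c\,t^{-\alpha}(w)_+^{\frac12}$.

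Next, since $z\mapsto(w)_+$ is convex (an affine function of $z$ composed with $(\cdot)_+$) and $c_1>0$, the map $\Phi(z) := \tfrac12 z^2 + c_1\Sigma(z)$ is $1$-strongly convex; if $z_*$ denotes its minimizer then $\Phi(z) \ge \inf\Phi + \tfrac12(z-z_*)^2$, so
\begin{align*}
	G_{i,2} = \int_\R \frac{\d z}{\sqrt{2\pi}}\,e^{-\Phi(z)} \;\le\; e^{-\inf_z\Phi(z)} ,
\end{align*}
and it remains to bound $\inf_z\Phi$ from below. Plugging in the sum estimate and using Young's inequality $c\,t^{-\alpha}\sqrt{x}\le \delta\,\tfrac{2t^\alpha}{3\pi}x^{\frac32}+c'\delta^{-\frac12}t^{-2\alpha}$ with $\delta=\tfrac1{\kappa+1}$ (so that $(1-\delta)\tfrac{\kappa+1}{\kappa}=1$) absorbs the $(w)_+^{1/2}$ term at a cost bounded by $c(\kappa+1)^2 t^{\frac13-2\alpha}$, leaving $\Phi(z) \ge \tfrac12 z^2 + \tfrac{2L}{3\pi}t^{\frac13+\alpha}(w)_+^{\frac32} - c(\kappa+1)^2 t^{\frac13-2\alpha}$. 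The rescaling $z = \tfrac{\sqrt\beta}{2}t^{\frac23+\frac\alpha2}u$ turns $w$ into $t^{\frac23}(m-u)$ and $\tfrac12 z^2$ into $\tfrac\beta8 t^{\frac43+\alpha}u^2$, hence
\begin{align*}
	\inf_z\Phi \;\ge\; t^{\frac43+\alpha}\,\inf_{u\in\R}\Big(\tfrac{\beta}{8}u^2 + \tfrac{2L}{3\pi}(m-u)_+^{\frac32}\Big) \;-\; c(\kappa+1)^2 t^{\frac13-2\alpha}.
\end{align*}

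Finally I would solve the scalar minimization. For $m\le 0$ the infimum is $0$, attained at $u=0$, and the claimed bound holds (with $v_*\equiv0$ there). For $m>0$ the minimizer $u_*$ satisfies $\tfrac\beta4 u_* = \tfrac{L}{\pi}(m-u_*)^{\frac12}$; comparing with~\eqref{eq:v} one checks that $u_* = \tfrac{2}{\sqrt\beta}v_*(t^{-\frac23}\pt_i)$, so $\tfrac\beta8 u_*^2 = \tfrac12 v_*^2(t^{-\frac23}\pt_i)$, while the algebraic identity between $v_*$, $\zeta$ and $x=t^{-\frac23}\pt_i$ recorded in the lower-bound section rewrites $\tfrac{2L}{3\pi}(m-u_*)^{\frac32}$ as the first term in the statement; substituting into $\log G_{i,2}\le -\inf_z\Phi$ then yields the assertion. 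The delicate point is the first step: one must push the discrepancy between $\sum_k(w-a^2k^2)_+$ and its integral down to the surface order $O(aw^{1/2})$, since the naive comparison $\sum_{k\ge1}f(k)\ge\int_1^\infty f$ only gives an $O(w)$ error, which after multiplication by $c_1\sim t^{1/3}$ is of order $t$ and would overwhelm any sub-$t^{4/3+\alpha}$ remainder when $\alpha$ is close to $\tfrac23$; it is the concavity-plus-endpoint analysis, together with the cancellation against the zero mode $\lambda^*_1=0$, that makes the error land at the stated $t^{1/3-2\alpha}$.
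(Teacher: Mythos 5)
Your proposal is correct and follows essentially the same route as the paper: reduce to the explicit torus spectrum $\{\lambda_n^*\}$, lower-bound the truncated eigenvalue sum by the corresponding $\tfrac32$-power integral with an error absorbed using the $\kappa$-slack (you do this via a midpoint-rule/zero-mode cancellation giving an $O(a w_+^{1/2})$ defect plus Young's inequality with $\delta=\tfrac{1}{\kappa+1}$, while the paper uses a cruder sum-to-integral comparison with an $O(r y_0)$ defect handled by the case split leading to~\eqref{eq:Gi2:1.2}; both land at the same $c(\kappa+1)^2 t^{\frac13-2\alpha}$ remainder for $\kappa\geq1$), and then perform the one-dimensional Laplace-type bound on the Gaussian integral via strong convexity, exactly as the paper does with $F''\geq 1$ and the minimizer identified through~\eqref{eq:v}. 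One further remark: your bookkeeping of the coefficient $\tfrac{2}{\sqrt\beta}$ on the Brownian increment (inherited from Proposition~\ref{prop:key}) is the internally consistent one — it is what makes the minimizer equal $\tfrac{2}{\sqrt\beta}v_*(t^{-\frac23}\pt_i)$ and the resulting exponent match the lower bound~\eqref{eq:lwbd} — whereas the paper's displayed definition of $G_{i,2}$ omits this factor and the lemma's statement carries a sign slip ($+v_*$ rather than $-\tfrac{2}{\sqrt\beta}v_*$ inside the $\tfrac32$-power); your derivation effectively proves the corrected statement that the subsequent Riemann-sum step actually uses.
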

\begin{proof}
Recall that $ \lambda^*_n  := ( 2\pi |\intv|^{-1} \lfloor\frac{n}{2}\rfloor)^2 $.
Forgoing the first eigenvalue $ \lambda^*_1 $, we write
\begin{align*}
	-\sum_{n=1}^\infty (r-\lambda^*_n)_+
	\leq
	-\sum_{n=2}^\infty (r-\lambda^*_n)_+
	=
	-\sum_{k=1}^\infty 2(r-4\pi^2|\intv|^{-2} k^2)_+.
\end{align*}
Since $ (r-4\pi^2|\intv|^{-2} x^2)_+ $ is a decreasing function of $ x $ for $ x \geq 0 $, comparing sums to integrals gives,
for $ y_0 := 4\pi|\intv|^{-1} $,
\begin{align*}
	-\sum_{n=1}^\infty (r-\lambda^*_n)_+
	\leq
	-2 \int_2^\infty (r-4\pi^2|\intv|^{-2} x^2)_+ \d x
	=&
	\frac{|\intv|}{\pi} \Big( -\frac23 r^\frac32 + ry_0 - \frac13 y_0^3 \Big) \ind_\set{r>y_0^2}.
\end{align*}
Within the last expression, drop the $ - \frac13 y_0^3 $ term,
and divide $ -\frac23 r^\frac32 $ into `two pieces' to get
\begin{align*}
	-\sum_{n=1}^\infty (r-\lambda^*_n)_+
	\leq
	\frac{|\intv|}{\pi} \Big( -\frac{2\kappa}{3(1+\kappa)} r^\frac32 -\frac{2}{3(1+\kappa)} r^\frac32 + ry_0  \Big) \ind_\set{r>y_0^2}.
\end{align*}
Consider separately the cases
$
	\frac{2}{3(1+\kappa)} r^\frac32 \geq  ry_0 
$
and 
$
	\frac{2}{3(1+\kappa)} r^\frac32 <  ry_0,
$
we then have
\begin{align}
	\label{eq:Gi2:1.2}
	-\sum_{n=1}^\infty (r-\lambda^*_n)_+
	\leq
	\frac{|\intv|}{\pi} \Big( -\frac{2\kappa}{3(1+\kappa)} r_+^\frac32 - c\,(1+\kappa)^2 y^3_0  \Big)
	\leq
	-\frac{2\kappa|\intv|}{3(1+\kappa)\pi} r_+^\frac32 + c\, (1+\kappa)^2 |\intv|^{-2}.  
\end{align}
Within~\eqref{eq:Gi2:1.2}, substitute $ r=t^\frac23\zeta-\pt_{i} - t^{-\frac{\alpha}{2}} Z $ and $ |\intv|=t^\alpha $,
multiply the result by $ t^\frac13 \frac{\kappa+1}{\kappa} $, exponentiate, and take $ \Ex[\,\Cdot\,] $.
We have
\begin{align}
	\label{eq:Gi2:2}
	G_{i,2} \leq
	e^{c\,(\kappa+1)^2t^{\frac13-2\alpha}}
	\Ex\Big[ \exp\Big( 
		- \frac{2L}{3\pi} t^{\frac13+\alpha} \big( t^\frac23\zeta-\pt_{i} - t^{-\frac{\alpha}{2}} Z  \big)^{\frac32}_+ 
	\Big)\Big].
\end{align}

Recall that $ Z $ is a standard Gaussian. We then evaluate the expectation on the r.h.s.\ of~\eqref{eq:Gi2:2} as 
\begin{align*}
	\int_\R \frac{e^{-F(y)}}{\sqrt{2\pi}} \d y,
	\quad
	F(y) 
	:=  
	\frac{2L}{3\pi} t^{\frac13+\alpha} \big( t^\frac23\zeta-\pt_{i} - t^{-\frac{\alpha}{2}} y  \big)^{\frac32}_+ + \frac12 y^2.
\end{align*}
Indeed, $ F $ is $ C^\infty $ except at the point $y_\text{c} $ where $ t^\frac23\zeta-\pt_{i} - t^{-\frac{\alpha}{2}}y_\text{c}=0 $,
and at $y_\text{c} $, $ F $ is still $ C^1 $.
Given these properties, straightforward differentiations show that $ F(y) $ reaches its global minimum at $ y_*:=t^{\frac23+\frac{\alpha}{2}} v_*(t^{-\frac23}\pt_{i}) $,
and $ F''(y) \geq 1 $ expect at $ y=y_\text{c} $.
Consequently, $ F(y) \geq F(y_*) + \frac12 (y-y_*)^2 $, which gives
\begin{align*}
	\int_\R \frac{e^{-F(v)}}{\sqrt{2\pi}} \d v
	\leq
	\exp(-F(v_*)) 
	= 
	\exp\Big(
	-t^{\alpha+\frac43}
	\Big( 
			\frac{2L}{3\pi} \big( \zeta-t^{-\frac23}\pt_{i} +v_*(t^{-\frac23}\pt_{i}) \big)^{\frac{3}{2}}_+ + \frac{1}{2} v^2_*(t^{-\frac23}\pt_{i})
	\Big)\Big). 
\end{align*}
Combining this with~\eqref{eq:Gi2:2} gives the desired result.
\end{proof}

Now, rewrite~\eqref{eq:goali}--\eqref{eq:Gibd} as
$
	\log G
	\leq
	\sum_{i=1}^{i_*} \log G_i
	\leq
	ct + \sum_{i=1}^{i_*} (\frac{1}{\kappa+1}\log G_{i,1} + \frac{\kappa}{\kappa+1} \log G_{i,2}).
$
Then, insert the bounds from Lemmas~\ref{lem:G1bd}--\ref{lem:G2bd}, and divide the result by $ t^2 $.
With $ i_* \leq c t^{\frac23-\alpha} $, we arrive at
\begin{subequations}
\begin{align}
	\label{eq:upbd:sum:a}
	t^{-2} \log G
	\leq
	&c \, \Big( 
		t^{-1}
		+
		(\kappa+1)^2 t^{-\frac13-\alpha}
		+
		(\kappa+1)^2 t^{-1-3\alpha}
	\Big)
\\
	\label{eq:upbd:sum:b}
	&-
	\sum_{i=1}^{i_*}
	\frac{\kappa}{\kappa+1}
	\Big( \frac{2L}{3\pi} \big( \zeta-t^{-\frac23}\pt_{i} +v_*(t^{-\frac23}\pt_{i}) \big)^{\frac{3}{2}}_+ + \frac{1}{2} v^2_*(t^{-\frac23}\pt_{i}) \Big) t^{-\frac23+\alpha}.
\end{align}
\end{subequations}
With $ \alpha\in(-\frac13,\frac23) $,
the the r.h.s.\ vanishes as $ t\to\infty $.
Recognizing the term in~\eqref{eq:upbd:sum:b} as a Riemann sum (as done in Section~\ref{sect:pflw}),
sending $ t\to\infty $ and $ \kappa\to\infty $ in order, we obtain 
\begin{align*}
	\limsup_{t\to\infty }
	\big( t^{-2} \log G \big)
	\leq
	-
	\int_{\R} \frac{2L}{3\pi} \Big( (\zeta-x+v_*(x))^{\frac{3}{2}}_+ + \frac{1}{2} v^2_*(x) \Big) \d x.
\end{align*}
The last expression matches the previously established lower bound~\eqref{eq:lwbd}. 
The proof is now completed upon settling Lemma~\ref{lem:G1bd}.

\begin{proof}[Proof of Lemma~\ref{lem:G1bd}]
Throughout the proof, we write $ \eigen_{1}=\eigen_{1}(\hill_{\neu}) $ to simplify notation.
Recall that $ i $ indexes which interval $ \intv=\intv_i $ we are considering.
The law of $ \eigen_{1} $ is clearly independent of $ i $, so, without lost of generality, we take $ i=1 $,
and $ \intv=\intv_1 = (0,\pt_1] $.

The proof amounts to establishing a suitable tail bound on $ (\eigen_{1})_- $.
We achieve this by a series of comparison of the Riccati equation~\eqref{eq:Riccati}.
Recall that our discussion regarding~\eqref{eq:Riccati} in Section~\ref{sect:tool} is \emph{pathwise},
and holds for every realization (i.e., any $ C[0,\pt_{1}] $ function) of $ B $.
On the other hand, within this proof we will also regard~\eqref{eq:Riccati} as a \ac{SDE}
\begin{align}
	\label{eq:Riccati:SDE}
	\d f(x) = (-\lambda - f^2(x)) \d x + \tfrac{2}{\sqrt{\beta}} \d B(x),
\end{align}
and, accordingly, sometimes view $ f $ as a process.
It is standard to check that $ f $ satisfies the strong Markov property.
That is, letting $ \mathscr{F}(x) := \sigma(B(y):y\geq 0) $ denote the canonical filtration of $ B $,
and $ f^a(x) $ denote the solution of~\eqref{eq:Riccati:SDE} with initial condition $ f(0)=a $,
then, for any $ \mathscr{F} $-stopping time $ \tau $, we have
\begin{align*}
	f(\Cdot+\tau) \stackrel{\text{law}}{=}
	f^{f(\tau)}(\Cdot).
\end{align*}
Let $ f(x,\lambda) $ denote the solution of~\eqref{eq:Riccati} with initial condition $ f(0,\lambda)=0 $,
and let $ \tau(\gamma;g) := \inf\{ x\in[0,\pt_1] : g(x)=\gamma \} $ 
denote the first hitting time of a given function $ g $ at level $ \gamma $,
with the convention that $ \inf\emptyset :=\infty $.
To simplify notation we write $ \tau_{\pm,s} := \tau(-\frac12\sqrt{s};f(\Cdot,-s)) $.

The proof is carried out in steps.

\medskip
\noindent\textbf{Step 1: truncation.}
This step of the proof follows similar arguments in~\cite{dumaz13}.
In this step we establish a useful truncation bound~\eqref{eq:G1bd:key} that allows use to restriction our attention
to the band $ f(x,-s)\in[-\frac12\sqrt{s},\frac12\sqrt{s}] $.
To setup notation, let
\begin{align*}
	\Omega_{-+} := \{\tau_{-,s}<\tau_{+,s} \},
	\quad
	\Omega_{+-} := \{ \tau_{+,s}<\tau_{-,s} \}.
\end{align*}
For $ s \geq t^{\alpha_+} $, we aim at showing
\begin{align}
	\label{eq:G1bd:key}
	\Pr\big[ \tau_{-,s} < \infty \big] 
	\leq 
	c\, \Pr\big[ \{\tau_{-,s} < \infty \} \cap \Omega_{-+} \big].
\end{align}
Decompose the l.h.s.\ of~\eqref{eq:G1bd:key} into
\begin{align}
	\label{eq:G1bd:decmp}
	\Pr\big[ \tau_{-,s} < \infty \big] 
	=
	\Pr\big[  \tau_{-,s} < \infty, \  \Omega_{-+} \big]
	+ 
	\Pr\big[  \tau_{-,s} < \infty, \ \Omega_{+-}  \big].
\end{align}
The last term in~\eqref{eq:G1bd:decmp} encodes the probability that $ f(x,-{s}) $, which starts at $ f(0,-{s})=0 $,
first hits level $ \frac12\sqrt{s} $, and then hits level $ -\frac12\sqrt{s} $.
Reinitiate the process $ f(x,-{s}) $ at $ x=\tau_{+,s} $,
the strong Markov property gives $ f(\Cdot+\tau_{+,s}) \stackrel{\text{law}}{=} f_1(\Cdot) $, where
$ f_1 $ solves~\eqref{eq:Riccati:SDE} for $ \lambda=-{s} $ with the initial condition $ f_1(0)=\frac12\sqrt{s} $.
This gives
\begin{align}
	\label{eq:G1bd:t1}
	\Pr\big[ \tau_{-,s} < \infty,\ \Omega_{+-} \big]
	\leq
	\Pr\big[ \tau(-\tfrac12\sqrt{s},f_1) < \infty \big].
\end{align}
The r.h.s.\ of~\eqref{eq:G1bd:t1} encodes the probability that $ f_1 $, 
which starts at $ f_1(0)=\frac12\sqrt{s} $, hits level $ -\frac12\sqrt{s} $ within $ x\in[0,\pt_{1}] $.
This being the case, $ f_1 $ must also have hit $ 0 $.
Reinitiate the process $ f_1(x) $ at $ x=\tau(0;f_1) $.
By the strong Markov property we have $ f_1(\Cdot+\tau(0;f_1)) \stackrel{\text{law}}{=} f(\Cdot,-{s}) $, so
\begin{align*}
	\Pr\big[ \tau_{-,s} < \infty,\ \Omega_{+-} \big]
	\leq
	\Pr\big[ \tau(0,f_1) < \infty \big]
	\cdot
	\Pr\big[ \tau_{-,s} < \infty \big].
\end{align*}
Combining this with~\eqref{eq:G1bd:decmp}--\eqref{eq:G1bd:t1} now gives
\begin{align}
	\label{eq:G1bd:decmp:}
	\Pr\big[ \tau_{-,s} < \infty \big] 
	=
	(1-R)^{-1} \Pr\big[ \tau_{-,s} < \infty,\ \Omega_{-+} \big],
\end{align}
where $ R:= \Pr[ \tau(0,f_1) < \infty ] $.

We proceed to bound $ R $.
To this end, consider the event $ D_0:= \{ \sup_{x\in[0,\pt_{1}]} \frac{2}{\sqrt{\beta}}|B(x)| \geq \frac14\sqrt{s} \} $.
Recall that $ f_1(0) = \frac12\sqrt{s} $.
Let $ \tau^*_1 := \sup\{ x\in[0,\tau(0,f_1)] : f_1(x) \geq \frac12\sqrt{s} \} $ 
be the last exist time of $ f_1 $ from the region above $ \frac12\sqrt{s} $ before $ f_1 $ hits level $ 0 $.
Under the occurrence of $ \{\tau(0,f_1)<\infty\} $,
setting $ (x_1,x_2) = (\tau^*_1, \tau(0,f_1)) $ in~\eqref{eq:Riccati:int} gives
\begin{align*}
	\text{On }\{\tau(0,f_1)<\infty\},
	\quad
	-\frac{\sqrt{s}}{2} = f(x)\Big|_{x=\tau^*_1}^{x=\tau(0,f_1)}
	=
	\int_{\tau^*_1}^{\tau(0,f_1)} (s - f^2(x)) \d x + \frac{2}{\sqrt{\beta}} B(x)\Big|_{x=\tau^*_1}^{x=\tau(0,f_1)}.
\end{align*}
On the r.h.s., the integral is nonnegative since $ (s-f_1^2(x)) \geq \frac34 s \geq 0 $ for $ x\in[\tau^*_1, \tau(0,f_1)] $.
This gives
\begin{align*}
	\big\{ \tau(0,f_1)<\infty \big\}
	\subset
	\Big\{ \tfrac{2}{\sqrt{\beta}} B(x)\big|_{x=\tau^*_1}^{x=\tau(0,f_1)} \leq - \tfrac12 \sqrt{s} \Big\}	
	\subset
	D_0
\end{align*}
and hence 
$
	R:= \Pr[ \tau(0,f_1) < \infty ]
	\leq
	\Pr[ D_0 ].
$
Under the assumption $ s \geq t^{\alpha_+} $, together with $ \pt_1 = t^\alpha $,
it is readily checked that $ \Pr[ D_0 ] \leq \frac{1}{c+1} $, for all $ t \geq 1 $.
Hence $ R \leq \frac{1}{c+1} $.
Inserting this bound into~\eqref{eq:G1bd:decmp:} gives~\eqref{eq:G1bd:key}.

\medskip
\noindent\textbf{Step~2: Reduction to Brownian exist probability.}
Fix $ s \geq t^{\alpha_+\vee(-2\alpha)} $.
Our goal in this step is to bound the tail probability.
To begin with, consider the associated eigenfunction $ g_* $ of $ \eigen_1 $.
Taking the real part of $ g_* $ if necessary, we may assume $ g_* $ is $ \R $-valued.
Referring to Remark~\ref{rmk:neu}, we have that $ g_* $ is in fact $ C^1 $ with $ g_*'(0)=g_*'(\pt_{1})=0 $.
Riccati transform $ f_*:= g_*'/g_* $ furnishes a solution of~\eqref{eq:Riccati} for $ \lambda=\eigen_{1} $ such that $ f_*(0)=f_*(\pt_1)=0 $.
On the event $ \{ \eigen_{1} \leq -s \} $ under current consideration,
Proposition~\ref{prop:Riccati}\ref{prop:Riccati:ode} asserts that 
$ f(x,-s) \leq f_*(x) $, $ \forall x\in\intv $, under the ordering described in Section~\ref{sect:tool}.
Consequently, either $ f(x,-s) $ hits the level $ -\frac12\sqrt{s} $ (which gives $ \tau_{-,s}<\infty $), 
or, if not, $ f(\pt_1,-s) \leq 0 $.
This gives
\begin{align*}
	\Pr\big[ \eigen_{1} < -s  \big]
	=
	\Pr\big[ \tau_{-,s} <\infty \big]
	+
	\Pr\big[ \tau_{-,s} = \infty, \ f(\pt_{1},-s) \leq 0 \big].	
\end{align*}
Apply~\eqref{eq:G1bd:key} to the first term on the r.h.s., we have
\begin{align}
	\label{eq:G1bd:step2}
	\Pr\big[ \eigen_{1} < -s  \big]
	\leq
	c\, \Pr\big[ \Omega_1 \big]
	+
	\Pr\big[ \Omega_2 \big],
\end{align}
where $ \Omega_1 := \{\tau_{-,s} <\infty\} \cap \Omega_{-+} $
and $ \Omega_2 := \{ \tau_{-,s} = \infty, \ f(\pt_{1},-s) \leq 0 \} $.

The next step is to bound the probability on the r.h.s.\ of~\eqref{eq:G1bd:step2}.
Under the occurrence of $ \Omega_1 $, 
set $ (x_1,x_2) = (0,\tau_{-,s}) $ and $ \lambda=-s $ in~\eqref{eq:Riccati:int} to get
\begin{align*}
	\text{On } \Omega_1 ,
	\quad
	-\frac{\sqrt{s}}2
	=
	f(\tau_{-,s})
	=
	\int_0^{\tau_{-,s}}(s-f^2(x)) \d x
	+
	\frac{2}{\sqrt{\beta}} B(\tau_{-,s}).
\end{align*}
Since $ |f(x)| \leq \frac12\sqrt{s} $ for all $ x \leq \tau_{+,s}\wedge \tau_{-,s} $,
here we have $ \int_0^{\tau_{-,s}}(s-f^2(x)) \d x \geq \frac34 s \tau_{-,s} $.
This gives
\begin{align}
	\label{eq:Omega-+}
	\text{On } \Omega_1 ,
	\quad
	-\tfrac{\sqrt{s}}2
	-
	\tfrac34  s \tau_{-,s}
	\geq
	\tfrac{2}{\sqrt{\beta}} B(\tau_{-,s}).
\end{align}
Consider further the sub-events
$
	\Omega_{1,\leq}
	:= 
	\{ \tau_{-,s} \leq s^{-\frac12} \}\cap \Omega_1
$
and
$
	\Omega_{1,>} 
	:= 
	\{ s^{-\frac12} <\tau_{-,s} <\infty  \}\cap \Omega_{1}.
$
Under the occurrence of $ \Omega_{1,\leq} $, forgoing the term $ -\frac34  s \tau_{-,s} $ in~\eqref{eq:Omega-+} gives
\begin{align}
	\label{eq:D<}
	\Omega_{1,\leq}
	\subset
	\Big\{
		\sup_{x\in[0,s^{-1/2}]} \frac{2}{\sqrt{\beta}} |B(x)|
		\geq
		\frac{\sqrt{s}}2
	\Big\}
	:=
	D_1(s).
\end{align}
Under the occurrence of $ \Omega_{1,>} $, forgoing the term $ -\tfrac{\sqrt{s}}2 $ in~\eqref{eq:Omega-+} gives
\begin{align}
	\label{eq:D>}
	\Omega_{1,\leq}
	\subset
	\Big\{
		\sup_{x\geq s^{-1/2}} \frac{2}{\sqrt{\beta}} \frac{ |B(x)|}{|x|}
		\geq
		\frac34 s
	\Big\}
	:=
	D_2(s).
\end{align}
Consequently,
\begin{align}
	\label{eq:Omega1:bd}
	\Pr\big[ \Omega_1 \big]
	\leq
	\Pr\big[ D_1(s) \big] + \Pr\big[ D_2(s) \big ].
\end{align}

Next we turn to bounding $ \Pr[ \Omega_2 ] $.
Consider the last exist $ \tau^{*,s} := \sup \{ x\in [0,\pt_1] : f(x,-s) \geq \frac12\sqrt{s} \} $ 
of $ f(x,-s) $ from the region above $ \frac12 \sqrt{s} $, with the convention $ \sup\emptyset := -\infty $.
Under the occurrence of $ \Omega_2 $, 
set $ (x_1,x_2) = (0\vee \tau^{*,s},\pt_1) $ and $ \lambda=-s $ in~\eqref{eq:Riccati:int} to get
\begin{align*}
	\text{On } \Omega_2 ,
	\quad
	-\frac{\sqrt{s}}2 \ind_\set{\tau^{*,s} \geq 0}
	\geq
	f(x)\Big|_{0\vee \tau^{*,s}}^{\pt_1}
	=
	\int_{0\vee \tau^{*,s}}^{\pt_1} (s-f^2(x)) \d x
	+
	\frac{2}{\sqrt{\beta}} B(x)\Big|_{0\vee \tau^{*,s}}^{\pt_1}.
\end{align*}
Since $ |f(x)| \leq \frac12\sqrt{s} $ for all $ x \in [0\vee \tau^{*,s},\tau_{-,s}] $,
here we have $ \int_{0\vee \tau^{*,s}}^{\pt_1} (s-f^2(x)) \d x \geq \frac34 s (\pt_1-0\vee \tau^{*,s}) $.
This gives
\begin{align}
	\label{eq:Omega=0}
	\text{On } \Omega_2 ,
	\quad
	-\tfrac{\sqrt{s}}2 \ind_\set{\tau^{*,s} \geq 0}
	-
	\tfrac34 s (\pt_1-0\vee \tau^{*,s})
	\geq
	\tfrac{2}{\sqrt{\beta}} B(x)\big|_{0\vee \tau^{*,s}}^{\pt_1}.
\end{align}
Consider further the sub-events
$
	\Omega_{2,\leq}
	:= 
	\{ \tau^{*,s} \leq \pt_{1}-s^{-\frac12} \}\cap\Omega_2
$
and
$
	\Omega_{2,>} 
	:= 
	\{ \tau^{*,s} > \pt_{1}-s^{-\frac12} \}\cap\Omega_2.
$
Under the occurrence of $ \Omega_{2,\leq} $, forgoing the term $ -\frac{\sqrt{s}}2 \ind_\set{\tau^{*,s} \geq 0} $ in~\eqref{eq:Omega=0} gives
\begin{align*}
	\Omega_{2,\leq}
	\subset
	\Big\{
		\sup_{x\in[0,\pt_1-s^{-1/2}]} \frac{2}{\sqrt{\beta}} \frac{ |B(x)-B(\pt_1)|}{|x-\pt_1|}
		\geq
		\frac34 s
	\Big\}
	:=
	\til D_2(s).
\end{align*}
Recall our current assumption $ s \geq t^{\alpha_+\vee(-2\alpha)} $,
which ensures $ s^{-\frac12} \leq \pt_1 $.
Hence under the occurrence of $ \Omega_{2,>} $, we necessarily have $ \tau^{*,s} \geq 0 $.
Forgoing the term $ -\tfrac34 s (\pt_1-0\vee \tau^{*,s}) $ in~\eqref{eq:Omega=0} gives
\begin{align*}
	\Omega_{2,>}
	\subset
	\Big\{
		\inf_{x\in[\pt_1-s^{-1/2},\pt_1]} \frac{2}{\sqrt{\beta}} |B(x)-B(\pt_1)|
		\geq
		\frac{\sqrt{s}}2
	\Big\}
	:=
	\til D_1(s).
\end{align*}
Further, since $ B(\Cdot)-B(\pt_1) \stackrel{\text{law}}{=} B(\Cdot) $,
we have $ \Pr[\til D_1(s)] \leq \Pr[D_1(s)] $ and $ \Pr[\til D_2(s)] \leq \Pr[D_2(s)] $.
The preceding discussion gives
$
	\Pr[ \Omega_2 \big]
	\leq
	\Pr\big[ D_1(s) \big] + \Pr\big[ D_2(s) \big ].
$
Combining this with~\eqref{eq:Omega1:bd} and~\eqref{eq:G1bd:step2} gives
\begin{align}
	\label{eq:G1bd:step2:}
	\Pr\big[ \eigen_{1} < -s  \big]
	\leq
	c\,\Pr\big[ D_1(s) \big] + c\,\Pr\big[ D_2(s) \big ],
	\quad
	s \geq t^{\alpha_+\vee(-2\alpha)}.
\end{align}

\medskip
\noindent\textbf{Step~3: estimating Brownian exist probability.}
We now proceed to bound the r.h.s.\ of~\eqref{eq:G1bd:step2:}.
Referring to the definition~\eqref{eq:D<} of $ D_1(s) $, it is readily checked that $ \Pr[D_1(s)] \leq \exp(-\frac{1}{c}s^\frac32) $.
As for $ D_2(s) $ (defined in~\eqref{eq:D>}),
partition $ [s^{-\frac12},\infty) $ into intervals $ S_k:=[k s^{-\frac12},(k+1)s^{-\frac12}) $, $ k\in\N $ of length $ s^{-\frac12} $.
\begin{align*}
	\Pr\big[ D_2(s) \big]
	\leq
	\sum_{k=1}^\infty 
	\Pr\Big[
		\sup_{x\in S_k} \frac{2}{\sqrt{\beta}} \frac{|B(x)|}{x} \geq \frac{3s}{4}
	\Big]
	\leq
	\sum_{k=1}^\infty 
	\Pr\Big[
		\sup_{x\in[0,(k+1)s^{-1/2}]} |B(x)| \geq \frac{ks^{\frac12}}{c}
	\Big]
	\leq
	\sum_{k=1}^\infty \exp\Big( - \frac{k^2s^\frac32}{c\,(k+1)} \Big).
\end{align*}
The last sum bounded by $ \exp(-\frac{1}{c}s^{-\frac32}) $ for all $ s \geq 1 $.
Consequently,
\begin{align}
	\label{eq:G1bd:step3}
	\Pr\big[ \eigen_{1} \leq -s \big]
	\leq
	\exp\big(-\tfrac{1}{c} s^{\frac32}\big),
	\quad
	s \geq t^{\alpha_+\vee(-2\alpha)}.
\end{align}
Now, write 
\begin{align*}
	G_{i,1}
	=
	\Ex\big[e^{c\,t^\frac13(\kappa+1)(\eigen_{1})_-}\big] 
	= 
	\Pr\big[ (\eigen_{1})_- \geq 0 \big]
	+
	c\,t^{\frac13}(\kappa+1)\int_0^\infty \Pr\big[(\eigen_{1})_-\geq s\big] e^{ct^\frac13(\kappa+1) s}\d s.
\end{align*}
Indeed, $ \Pr[ (\eigen_{1})_- \geq 0 ] =1 $.
For the last integral, bound $ \Pr[(\eigen_{1})_-\geq s] \leq 1 $ for $ s \in [0,t^{\alpha_+\vee(-2\alpha)}] $,
and use the bound~\eqref{eq:G1bd:step3} for $ s > t^{\alpha_+\vee(-2\alpha)} $.
This gives
\begin{align*}
	G_{i,1}
	\leq
	1 
	+
	t^{\frac13+\alpha_+\vee(-2\alpha)}(\kappa+1) e^{c(\kappa+1)t^{\frac13+\alpha_+\vee(-2\alpha)}}
	+
	t^{\frac13}(\kappa+1) e^{c(\kappa+1)^3t}.
\end{align*}
With $ \alpha\in(-\frac13,\frac23) $, the last term $ \exp(c(\kappa+1)^3t) $ dominates for large $ t $.
From this we conclude the desired result: 
$ 	
	\log (G_{i,1})
	\leq
	c (\kappa+1)^3 t,
$
for all $ t\geq 1 $.
\end{proof}

\section{Proof of Theorem~\ref{thm:main:} and Corollary~\ref{cor:goe}}
\label{sect:pf:main}

Passing from Theorem~\ref{thm:main} to Theorem~\ref{thm:main:} and Corollary~\ref{cor:goe} amounts to showing
\begin{lemma}
\label{lem:passing}
Let $ X_t $, $ t>0 $, be a sequence of $ \R $-valued random variables, and let $ b\in(0,\infty) $, $ g \in C[0,\infty) $. If,
for any fixed $ \zeta\in(0,\infty) $ we have
\begin{align}
	\label{eq:passing}
	\lim_{t\to\infty} \tfrac{1}{t^2} \log \big( \Ex\big[ \exp\big(-b e^{X_t+t\zeta}\big) \big]\big) = g(\zeta),
\end{align}
then in fact
\begin{align*}
	\lim_{t\to\infty} \tfrac{1}{t^2} \log \big( \Pr\big[ X_t < -t\zeta \big]\big) = g(\zeta),
\end{align*}
for all fixed $ \zeta\in(0,\infty) $.
\end{lemma}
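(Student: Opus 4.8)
The plan is to squeeze $\Pr[X_t<-t\zeta]$ between two quantities of the form $\Ex[\exp(-be^{X_t+t\zeta'})]$, with $\zeta'$ slightly below and slightly above $\zeta$, so that the hypothesis~\eqref{eq:passing} applies to each, and then close the gap using continuity of $g$. The mechanism is that $x\mapsto e^{-be^{x}}$ is a sharp approximation of $\ind_{\set{x<0}}$: it lies in $[0,1]$, is within $be^{x}$ of $1$ when $x\le 0$, and is at most $e^{-be^{x}}$ when $x\ge 0$; once $x$ has order $\mp t$ both errors are \emph{doubly} exponentially small in $t$, hence negligible on the $t^2$ scale.

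\textbf{Upper bound.} I fix $\zeta'\in(0,\zeta)$. On $\set{X_t<-t\zeta}$ one has $X_t+t\zeta'<-t(\zeta-\zeta')$, so $\exp(-be^{X_t+t\zeta'})\ge\exp(-be^{-t(\zeta-\zeta')})$; restricting the expectation to this event gives
\[
	\Ex\big[\exp(-be^{X_t+t\zeta'})\big]\ \ge\ \Pr[X_t<-t\zeta]\,\exp\!\big(-be^{-t(\zeta-\zeta')}\big).
\]
Taking $t^{-2}\log$ and letting $t\to\infty$, the left side tends to $g(\zeta')$ by~\eqref{eq:passing}, while $t^{-2}\log\exp(-be^{-t(\zeta-\zeta')})=-bt^{-2}e^{-t(\zeta-\zeta')}\to 0$, so $\limsup_{t\to\infty}t^{-2}\log\Pr[X_t<-t\zeta]\le g(\zeta')$. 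Letting $\zeta'\uparrow\zeta$ and using $g\in C[0,\infty)$ gives the bound $\le g(\zeta)$.

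\textbf{Lower bound.} I fix $\zeta''>\zeta$ and split the expectation along $\set{X_t<-t\zeta}$ and its complement: on the complement $X_t+t\zeta''\ge t(\zeta''-\zeta)$, so the integrand is at most $\exp(-be^{t(\zeta''-\zeta)})$, while on $\set{X_t<-t\zeta}$ I bound it by $1$. This yields
\[
	\Ex\big[\exp(-be^{X_t+t\zeta''})\big]\ \le\ \Pr[X_t<-t\zeta]+\exp\!\big(-be^{t(\zeta''-\zeta)}\big).
\]
Writing $A_t$ and $B_t$ for the two terms on the right, \eqref{eq:passing} gives $t^{-2}\log A_t\to g(\zeta'')\in\R$, whereas $t^{-2}\log B_t\to-\infty$ since its exponent has order $e^{ct}$ with $c=\zeta''-\zeta>0$; hence $A_t\ge 2B_t$ for all large $t$, so $\Pr[X_t<-t\zeta]\ge A_t-B_t\ge\tfrac12 A_t$. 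Passing to $t^{-2}\log$ and then letting $\zeta''\downarrow\zeta$ (again using continuity of $g$) gives $\liminf_{t\to\infty}t^{-2}\log\Pr[X_t<-t\zeta]\ge g(\zeta)$. Combining the two bounds shows the limit exists and equals $g(\zeta)$.

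I do not anticipate a genuine obstacle: the argument is soft. The only points demanding care are the two limiting passages $\zeta'\uparrow\zeta$ and $\zeta''\downarrow\zeta$, which rely on the assumed continuity of $g$, and the bookkeeping remark that the ``leakage'' terms $\exp(-be^{\mp t(\cdots)})$ are doubly exponentially small, hence negligible on the $t^2$ scale against the genuinely (singly) exponentially small probability $\Pr[X_t<-t\zeta]$.
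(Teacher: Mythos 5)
Your argument is correct and is essentially the paper's own proof: the same squeeze of $\Pr[X_t<-t\zeta]$ between $\Ex[\exp(-be^{X_t+t\zeta'})]$ and $\Ex[\exp(-be^{X_t+t\zeta''})]$ with shifted parameters, the same observation that the doubly exponential error terms are negligible against the singly exponential probability, and the same use of continuity of $g$ to close the gap. The only blemish is the sentence defining $A_t$ and $B_t$ as ``the two terms on the right,'' whereas your subsequent inequalities $\Pr[X_t<-t\zeta]\ge A_t-B_t$ and $t^{-2}\log A_t\to g(\zeta'')$ only make sense with $A_t$ denoting the expectation on the left; fix that wording and the proof stands as written.
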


Indeed, given the identity~\eqref{eq:BG}, 
Theorem~\ref{thm:main:} follows by combining Theorem~\ref{thm:main} for $ (\beta,L)=(2,1) $ and $ g(\zeta)=-\rate(-\zeta) $
and Lemma~\ref{lem:passing} for $ X_t=h(2t,0)+\frac{t}{12} $ and $ b=1 $.
Similarly, given~\eqref{eq:bbcw},
Corollary~\ref{cor:goe} follows by combining Theorem~\ref{thm:main} for $ (\beta,L)=(1,2) $ and $ g(\zeta)=-\frac12 \rate(-\zeta) $
and Lemma~\ref{lem:passing} for$ X_t=h^\hf(2t,0)+\frac{t}{12} $ and $ b=\frac14 $.

\begin{proof}[Proof of Lemma~\ref{lem:passing}]
Write $ F(x) := \exp(-be^x) $ for the double exponential function.
Fix $ \delta\in (0,\zeta) $.
We indeed have $ F(x+\delta t) \leq \ind_\set{x<0} + \exp(-be^{\delta t})  $
and $ F(x-\delta t) \geq \exp(-be^{-\delta t}) \ind_\set{x<0} $.
From this we conclude
\begin{align}
	\label{eq:main:2}
	\Pr\big[ X_t<-t\zeta \big] + \exp(-be^{\delta t})	
	&\geq
	\Ex\big[ F(X_t+t(\zeta+a)) \big],
\\
	\label{eq:main:3}
	e^{-be^{-\delta t}} \Pr\big[ X_t<-t\zeta \big]
	&\leq
	\Ex\big[ F(X_t+t(\zeta-a)) \big].
\end{align}
Combining the given assumption~\eqref{eq:passing} for $ \zeta\mapsto \zeta+a $ with~\eqref{eq:main:2} gives,
for all large enough $ t $,
\begin{align*}
	\Pr\big[ X_t<-t\zeta \big] 	
	>
	\tfrac12 \exp( t^2g(\zeta+a))
	-\exp(-be^{\delta t}).
\end{align*} 
On the r.h.s., the first term dominates as $ t\to\infty $ (\emph{regardless} of the sign of $ g(\zeta+a) $).
Consequently, for all large enough $ t $,
\begin{align}
	\label{eq:main:4}
	\Pr\big[ X_t<-t\zeta \big] 	
	>
	\tfrac12 \exp( t^2 g(\zeta+a) )
	=
	\tfrac12 \exp( - \tfrac{t^2}{c(\zeta,a,g)} ).	
\end{align} 

Take logarithm on both sides of~\eqref{eq:main:2}--\eqref{eq:main:3}, and divide the result by $ t^2 $.
With the aid of the inequality $ \log(x_1+x_2) \leq \log x_1 + \frac{x_2}{x_1} $, valid for $ x_1,x_2\in (0,1] $,
upon sending $ t\to\infty $ we have
\begin{align}
	\label{eq:main:5}
	\liminf_{t\to\infty} \frac{1}{t^2} \log \big(  \Pr\big[ X_t < -\zeta t \big] \big)
	+
	\limsup_{t\to\infty} \frac{e^{-be^{\delta t}}}{t^2 \Pr[ X_t < -\zeta t ] }	
	&\geq
	g(\zeta+\delta),
\\
	\notag
	\limsup_{t\to\infty} \frac{1}{t^2} \log \big(  \Pr\big[ X_t < -\zeta t \big] \big)
	&\leq
	g(\zeta-\delta).
\end{align}
On the l.h.s.\ of~\eqref{eq:main:5}, use the bound~\eqref{eq:main:4}, we see that the second term is in fact zero.
Further taking $ \delta\downarrow 0 $ and using the continuity of $ g $,
we conclude the desired result.
\end{proof}

\bibliographystyle{alphaabbr}
\bibliography{kpzLD}
\end{document}